\newcommand{\apmd}[2][]{							
	\ifthenelse{\equal{#1}{}}%
					{ \operatorname{N}_{#2}	}%
					{ \operatorname{N}_{#1,#2} 	}}
\newcommand{\LIP}{{\rm LIP}}
\newcommand{\aint}[2][]{
	\ifthenelse{\equal{#1}{}}%
					{%
\mathchoice%
      {\mathop{\kern 0.2em\vrule width 0.6em height 0.69678ex depth -0.58065ex
              \kern -0.8em \intop}\nolimits_{\kern -0.45em#2}^{#1}}%
      {\mathop{\kern 0.1em\vrule width 0.5em height 0.69678ex depth -0.60387ex
              \kern -0.6em \intop}\nolimits_{#2}^{#1}}%
      {\mathop{\kern 0.1em\vrule width 0.5em height 0.69678ex depth -0.60387ex
              \kern -0.6em \intop}\nolimits_{#2}^{#1}}%
      {\mathop{\kern 0.1em\vrule width 0.5em height 0.69678ex depth -0.60387ex
              \kern -0.6em \intop}\nolimits_{#2}^{#1}}}%
					{%
\mathchoice%
      {\mathop{\kern 0.2em\vrule width 0.6em height 0.69678ex depth -0.58065ex
              \kern -0.8em \intop}\nolimits_{\kern -0.45em#1}^{#2}}%
      {\mathop{\kern 0.1em\vrule width 0.5em height 0.69678ex depth -0.60387ex
              \kern -0.6em \intop}\nolimits_{#1}^{#2}}%
      {\mathop{\kern 0.1em\vrule width 0.5em height 0.69678ex depth -0.60387ex
              \kern -0.6em \intop}\nolimits_{#1}^{#2}}%
      {\mathop{\kern 0.1em\vrule width 0.5em height 0.69678ex depth -0.60387ex
              \kern -0.6em \intop}\nolimits_{#1}^{#2}}}}
\numberwithin{equation}{section}
\newtheorem{theorem}{Theorem}[section]
\newtheorem{corollary}[theorem]{Corollary}
\newtheorem{lemma}[theorem]{Lemma}
\newtheorem{proposition}[theorem]{Proposition}
\newtheorem{definition}[theorem]{Definition}
\newtheorem{example}[theorem]{Example}
\newtheorem{question}[theorem]{Question}
\theoremstyle{remark}
\newtheorem{remark}[theorem]{Remark}
\DeclareMathOperator{\diam}{diam}
\newcommand{\R}{\mathbb{R}}
\def\fz{\infty}
\DeclareMathOperator{\lip}{Lip}
\definecolor{emerald}{rgb}{0.31, 0.78, 0.47}
\def\bint{{\ifinner\rlap{\bf\kern.35em--}
\int\else\rlap{\bf\kern.45em--}\int\fi}\ignorespaces}
\def\bbint{{\ifinner\rlap{\bf\kern.35em--}
\hspace{0.078cm}\int\else\rlap{\bf\kern.45em--}\int\fi}\ignorespaces}
\def\diam{{\mathop\mathrm{\,diam\,}}}
\def\dfrac{\displaystyle\frac}
\def\r{\right}
\def\lf{\left}
\def\bint{{\ifinner\rlap{\bf\kern.35em--}
\int\else\rlap{\bf\kern.45em--}\int\fi}\ignorespaces}
\begin{document}

\title[Thick quasiconvexity and applications]{On infinity thick quasiconvexity and applications}

\author{Miguel Garc\'ia-Bravo}
\address{Institute of Interdisciplinary Mathematics, Department of Mathematical Analysis and Applied
Mathematics, Faculty of Mathematics, Complutense University of Madrid, 28040 Madrid, Spain} \email{miguel05@ucm.es}

\author{Toni Ikonen} 
\address{Department of Mathematics, University of Fribourg, Chemin du Musée 23, 1700 Fribourg, Switzerland.}

\email{toni.ikonen@unifr.ch}

\author{Zheng Zhu}

\address{School of Mathematic Sciences\\
            Beihang University\\
             Beijing, 102206\\
            P. R. China}

\email{zhzhu@buaa.edu.cn}

\keywords{Sobolev space, Lipschitz representative, thick quasiconvexity, essential distance, $\infty$-harmonic, Sobolev extension domain}
\thanks{M.G-B. acknowledges the support provided by the grant PID2022-138758NB-I00
from the Ministerio de Ciencia e Innovación of Spain. T.I. was supported by the Swiss National Science Foundation grant 212867. Z.Z. was supported by the NSFC grant (No. 12301111) and “the Fundamental Research Funds for the Central Universities” in Beihang University and the Beijing Natural Science Foundation (No. 1242007).}
\subjclass[2020]{Primary: 46E36. Secondary: 30L99, 46E35}


\begin{abstract}
We investigate geometric characterizations of a metric measure space where every function in the Newton--Sobolev space $N^{1,\infty}(Z)$ has a Lipschitz representative. We prove that when the metric space is locally complete and the reference measure is infinitesimally doubling, the above property is equivalent to the space being very $\infty$-thick quasiconvex up to a scale. That is, up to some scale, every pair of points can be joined by a family of quasiconvex curves that is not negligible for the $\infty$-modulus.

As a first application, we prove a local-to-global improvement for the weak $(1,\infty)$-Poincaré inequality for locally complete quasiconvex metric spaces that have a doubling reference measure. As a second application, we apply our results to the existence and uniqueness of $\infty$-harmonic extensions with Lipschitz boundary data for precompact domains in a large class of metric measure spaces. As a final application, we illustrate that in the context of Sobolev extension sets, very $\infty$-thick quasiconvexity up to a scale plays an analogous role as local uniform quasiconvexity does in the Euclidean space.

Our assumptions are adapted to the analysis of Sobolev extension sets and thus avoid stronger assumptions such as the doubling property of the measure. Examples satisfying our assumptions naturally occur as simplicial complexes, GCBA spaces, and metric quotients of Euclidean spaces.
\end{abstract}
\maketitle

\section{Introduction}

In this paper, we consider locally complete metric measure spaces $Z$ equipped with a reference measure that is infinitesimally doubling, and finite and positive on all balls. 

We seek \emph{geometric} conditions on $Z$ equivalent to every Sobolev function in $W^{1,\infty}(Z)$ having a Lipschitz representative. There are several definitions of Sobolev spaces in metric measure spaces. Two widely used ones are the Haj{\l}asz--Sobolev space $M^{1,p}(Z)$ \cite{M=W} and the Newton--Sobolev space $N^{1,p}(Z)$ \cite{Shanmugalingam}. We refer to the recent surveys \cite{Amb:Col:DiMa:15,Amb:Iko:Luc:Pas:24,Amb:Iko:Luc:Pas:24:correction} and the monographs \cite{Bj:Bj:11,HKST2015,Sav:22} for overviews on the expanding field of Sobolev analysis on metric measure spaces.

Since we only consider metric measure spaces in which every ball has a positive and finite measure, the Haj{\l}asz--Sobolev space $M^{1,\infty}(Z)$ is isomorphic to the space of bounded Lipschitz functions $\LIP^{\infty}(Z)$, where the isomorphism is the standard embedding of $\LIP^{\infty}(Z)$ into $L^{\infty}(Z)$. There is also a uniquely defined map from $N^{1,\infty}(Z)$ into $L^{\infty}(Z)$, where every equivalence class from $N^{1,\infty}(Z)$, uniquely defined Sobolev $\infty$-capacity almost everywhere, is sent to its Lebesgue equivalence class in $L^{\infty}(Z)$. The image of this embedding will be our Sobolev space $W^{1,\infty}(Z)$. In spaces such as Euclidean spaces, Riemannian manifolds, or sub-Riemannian manifolds, the Sobolev space is isometrically isomorphic to the standard definition.

\subsection{Sobolev functions and Lipschitz representatives}
We seek geometric conditions equivalent to $\LIP^{\infty}(Z) = W^{1,\infty}(Z)$. When $Z$ is a Euclidean domain, the equality holds if and only if the domain is locally uniformly quasiconvex. In the metric measure space setting this is not enough. Indeed, while the standard Sierpi\'nski carpet, see \Cref{carpet:intro}, is quasiconvex and the natural self-similar measure is a non-trivial doubling measure, the Sobolev space $W^{1,\infty}(Z)$ coincides with $L^{\infty}(Z)$, which contains $\LIP^{\infty}(Z)$ as a proper subspace; see e.g. \cite[Section 9]{Bj:Bj:Mal:25}. Due to this striking example, a stronger form of connectivity is needed. The equality $\LIP^{\infty}(Z) = W^{1,\infty}(Z)$ is known when $Z$ is a locally complete $p$-PI, i.e. a metric measure space with a doubling measure and supporting a weak $(1,p)$-Poincaré inequality for some $p \in [1,\infty]$ in the sense of Heinonen--Koskela \cite{Hei:Ko:98}. The strongest of the Poincaré inequalities is the weak $(1,1)$-Poincaré inequality, which has several geometric characterizations, see e.g. the recent survey \cite{Caputo}, while the weakest one is the weak $(1,\infty)$-Poincaré inequality, introduced in \cite{Durand,Durand_2}. When the reference measure is doubling, the weak $(1,\infty)$-Poincaré inequality has the following characterization due to Durand-Cartagena, Jaramillo, and Shanmugalingam.

\begin{proposition}{\cite[Theorem 3.1]{D-CJS2016}}\label{prop_DURAND}
    Let $Z$ be a locally complete metric measure space with a doubling measure that is finite and positive on all balls. Then the following conditions are equivalent:
    \begin{enumerate}
        \item $Z$ supports a weak $(1,\infty)$-Poincaré inequality;
        \item $Z$ is very $\infty$-thick quasiconvex: there exists $C\geq 1$ so that every pair of points $x,y \in Z$ can be joined with a curve whose length is at most $C d(x,y)$ and the curve can be chosen to be in the complement of any given $\infty$-negligible family of curves;
        \item $Z$ is connected and $\LIP^{\infty}(Z)=W^{1,\infty}(Z)$ with comparable energy seminorms.
    \end{enumerate}
\end{proposition}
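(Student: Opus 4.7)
The plan is to close the loop (2) $\Rightarrow$ (3) $\Rightarrow$ (1) $\Rightarrow$ (2). The direction (2) $\Rightarrow$ (3) is quickest: given $f \in N^{1,\infty}(Z)$ with an $\infty$-weak upper gradient $g \in L^{\infty}(Z)$, the exceptional family $\Gamma_f$ of rectifiable curves on which the upper gradient inequality fails has zero $\infty$-modulus; for any $x,y \in Z$, very $\infty$-thick quasiconvexity supplies a curve $\gamma \notin \Gamma_f$ of length at most $C\, d(x,y)$ joining $x$ and $y$, yielding $|f(x)-f(y)| \le C \|g\|_{\infty}\, d(x,y)$ and hence a Lipschitz representative. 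The reverse comparison in (3) uses the standard fact that the pointwise Lipschitz constant $\lip f$ is an upper gradient of any Lipschitz $f$.

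For (3) $\Rightarrow$ (1) I would upgrade the global Sobolev--Lipschitz identity to a local oscillation bound via a cutoff localization. Given a ball $B$ of radius $r$ and a Lipschitz function $u$ on $\lambda B$ with upper gradient $g$, multiply $u - u(x_0)$ (for $x_0 \in B$) by a Lipschitz cutoff $\varphi$ supported in $\lambda B$; the resulting globally defined function has minimal weak upper gradient controlled by $\|g\|_{L^{\infty}(\lambda B)}$, so the comparable-seminorm identity in (3) yields a Lipschitz constant of size $C \|g\|_{L^{\infty}(\lambda B)}$ on $Z$. This delivers the pointwise bound $\osc_B u \le C r \|g\|_{L^{\infty}(\lambda B)}$, which, through the doubling measure, converts into the integral-average formulation of the weak $(1,\infty)$-Poincar\'e inequality.

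The hardest direction is (1) $\Rightarrow$ (2). Fix an $\infty$-negligible family $\Gamma$ and points $x,y$, and pick $L^{\infty}$-admissible densities $\rho_\varepsilon$ for $\Gamma$ with $\|\rho_\varepsilon\|_{\infty} < \varepsilon$. Consider the Lipschitz function $u_\varepsilon(z)$ defined as the $\rho_\varepsilon$-length distance from $z$ to $x$; the weak $(1,\infty)$-PI applied to $u_\varepsilon$ forces the existence of a curve from $x$ to $y$ of length at most $C\, d(x,y)$ whose $\rho_\varepsilon$-integral is of order $\varepsilon\, d(x,y)$. An Arzel\`a--Ascoli extraction (valid under the doubling assumption) followed by a diagonal argument as $\varepsilon \to 0$ should produce a limit curve $\gamma$ of length $\le C\, d(x,y)$ that lies outside $\Gamma$.

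The main obstacle is precisely this last extraction. Since the $\infty$-modulus is not $\sigma$-subadditive in the strong form needed, simply concatenating countably many exceptional $\rho_\varepsilon$-sub-families does not produce a single $\infty$-negligible set containing them all, so one cannot naively guarantee that the Arzel\`a--Ascoli limit avoids $\Gamma$. Overcoming this requires simultaneously tracking uniform length control (from the PI) and a modulus-theoretic exclusion principle along a well-chosen subsequence, most naturally through a Fuglede-type argument that selects curves whose $\rho_\varepsilon$-length is strictly less than $1$ for all sufficiently large indices.
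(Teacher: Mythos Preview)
The paper does not prove this proposition directly; it is quoted from \cite{D-CJS2016}. What the paper does prove are the sharper Theorems~\ref{thm:equivalence} and~\ref{thm:bounded:doubling}, and comparing your outline to that route exposes two issues.

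Your (3) $\Rightarrow$ (1) has a genuine gap. When you multiply $u-u(x_0)$ by a cutoff $\varphi$ supported in $\lambda B$, the Leibniz rule produces an upper gradient of the form $\varphi g + |u-u(x_0)|\,\lip(\varphi)$, and the $L^\infty$ norm of the second term is of order $r^{-1}\osc_{\lambda B}u$. This is \emph{not} controlled by $\|g\|_{L^\infty(\lambda B)}$ unless you already have a Poincar\'e-type bound on $\lambda B$, so the argument is circular. The paper bypasses this entirely: it proves (3) $\Rightarrow$ (2) through the contradiction in Proposition~\ref{prop:Hajlasz:to:thick} (building an explicit Newtonian function from a modulus-distance that violates the assumed Haj\l asz bound), and then obtains the Poincar\'e inequality from (2) via Proposition~\ref{prop:thick:to:Hajlasz}, whose output is the already \emph{localized} Haj\l asz gradient $z\mapsto C\|\rho\|_{L^\infty(B(z,C'R))}$.

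For (1) $\Rightarrow$ (2) the obstacle you flag largely evaporates once you use the structural fact, recorded after Definition~\ref{de:negligible}, that a family is $\infty$-negligible if and only if it is contained in $\Gamma_N^+$ for a \emph{single} Borel null set $N$. Avoiding an arbitrary $\infty$-negligible family thus reduces to finding one quasiconvex curve spending zero length in a fixed null set; no diagonal Arzel\`a--Ascoli extraction over $\varepsilon\to 0$ is needed. This is how the paper packages the implication in Proposition~\ref{prop:quasiconvexity_2} together with the upgrade Lemma~\ref{thm:very_thick-iff-infty-thick}.

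Your (2) $\Rightarrow$ (3) is fine and, because you have \emph{very} $\infty$-thick quasiconvexity at hand, is actually more direct than the paper's Proposition~\ref{prop:thick:to:Hajlasz}, which starts from the weaker set-to-set version and must therefore pass through Lusin--Egoroff and density-point approximations before reaching the Lipschitz bound.
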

While the equality $\LIP^{\infty}(Z) = W^{1,\infty}(Z)$ does not imply that the energy seminorms are comparable, e.g. for a pair of separated Euclidean balls, it does imply that the norms of the two spaces are equivalent in the sense that there exists $c \in (0,1]$ for which
\begin{align}\label{eq:equivalenceofnorms}
    c\|u\|_{ \LIP^{\infty}(Z) }
    \leq
    \|u\|_{ W^{1,\infty}(Z) }
    \leq
    \|u\|_{ \LIP^{\infty}(Z) }
    \quad\text{for every $u \in W^{1,\infty}(Z)$.}
\end{align}
These standard norms are such that the map $P \colon \LIP^{\infty}(Z) \rightarrow W^{1,\infty}(Z), \,u \mapsto u$ is injective and has operator norm at most one. Thus, by the bounded inverse theorem, the surjectivity of $P$ implies the existence of a constant $c$ as in \eqref{eq:equivalenceofnorms}.

\begin{figure}
    \centering
    \includegraphics[width=0.25\linewidth]{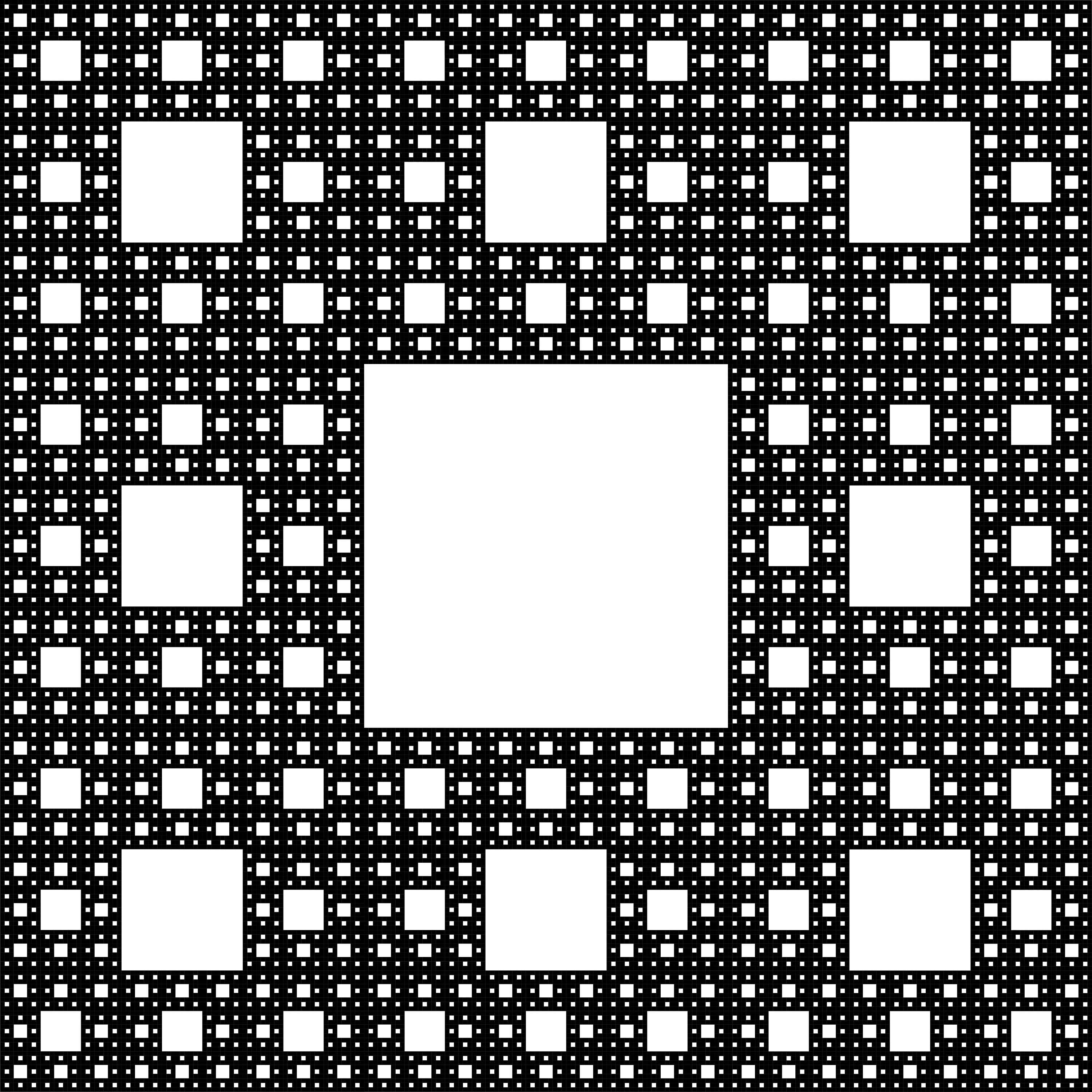}
    \caption{Sierpi\'nski carpet $S$}
    \label{carpet:intro}
\end{figure}

In contrast to the very $\infty$-thick quasiconvexity property defined in \Cref{prop_DURAND} (2), a geometric property equivalent to \eqref{eq:equivalenceofnorms} has to be \emph{local}. For example, if $Z$ is the union of two separated open Euclidean balls and the separation is decreased to zero, the corresponding constants $c$ converge to zero. This phenomenon is captured by our first theorem,  where we localize the very $\infty$-thick quasiconvexity from \cite{D-CJS2016}.
\begin{theorem}\label{thm:equivalence}
Let $Z$ be a locally complete metric measure space with a reference measure that is infinitesimally doubling, and finite and positive on all balls. Then the following are quantitatively equivalent.
\begin{enumerate}
    \item $\LIP^{\infty}(Z) = W^{1,\infty}(Z)$, where  \eqref{eq:equivalenceofnorms} holds with a constant $c > 0$;
    \item $Z$ is very $\infty$-thick $(C,R)$-quasiconvex for some $C \geq 1$ and $R \in (0,\infty]$: for every $x_0 \in Z$, every pair $x,y \in B( x_0, R )$ can be joined with a curve whose length is at most $C d(x,y)$ and the curve can be chosen to be in the complement of any given $\infty$-negligible family of curves.
\end{enumerate}
Either of the properties (1) or (2) imply that $Z$ supports a weak $(1,\infty)$-Poincaré inequality up to some scale. Conversely, if the reference measure is doubling up to some scale and the weak $(1,\infty)$-Poincaré inequality holds up to a scale, then (2) holds. Both of these statements are quantitative.
\end{theorem}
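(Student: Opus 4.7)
The plan is to use the essential distance $d_e$ highlighted in the introduction as the bridge between the analytic condition (1) and the geometric condition (2), and then to transfer to the Poincaré statements by localizing \Cref{prop_DURAND}. On each connected component of $Z$, I will rely on two complementary descriptions of $d_e(x,y)$: first, as the supremum over all $\infty$-negligible curve families $\Gamma$ of the infimal length of curves joining $x$ to $y$ outside $\Gamma$; second, as the supremum of $|u(x)-u(y)|$ over $u\in W^{1,\infty}(Z)$ with minimal $\infty$-weak upper gradient at most $1$. The second description shows that $y\mapsto d_e(x,y)$ itself has $1$ as an $\infty$-weak upper gradient, and very $\infty$-thick $(C,R)$-quasiconvexity is equivalent to the distance inequality $d_e \le C\,d$ holding inside every ball of radius $R$.

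The implication (2) $\Rightarrow$ (1) is the direct half. Given $u \in W^{1,\infty}(Z)$ with minimal $\infty$-weak upper gradient $g_u \in L^\infty(Z)$, let $\Gamma$ be the $\infty$-negligible family on which $u$ fails the upper gradient inequality against $g_u$. For any $x_0 \in Z$ and $x,y \in B(x_0,R)$, condition (2) supplies a curve $\gamma \notin \Gamma$ from $x$ to $y$ with $\ell(\gamma) \le C\,d(x,y)$, whence $|u(x)-u(y)| \le C\|g_u\|_\infty d(x,y)$. The pointwise representative is thus locally Lipschitz with a uniform constant; gluing across a countable cover by radius-$R/2$ balls, together with the uniform bound $\|u\|_\infty$, yields an honest Lipschitz representative and proves (1) with quantitative constants.

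The implication (1) $\Rightarrow$ (2) is the main obstacle. The strategy is to test (1) against the essential distance itself. Fix $x \in Z$, set $M=1$, and define $v_x(y) := \min\{d_e(x,y),M\}$ on the component of $x$ and $v_x \equiv M$ on every other component. Then $v_x \in W^{1,\infty}(Z)$ with norm at most $M+1$, so (1) provides a Lipschitz representative with Lipschitz constant $L \le c^{-1}(M+1)$. Since $v_x(x)=0$, the inequality $v_x(y) \le L\,d(x,y)$ forces two consequences: for $y$ in a different component from $x$ the left-hand side equals $M$, so $d(x,y) \ge Mc/(M+1)$, and the components of $Z$ are uniformly separated; for $y$ in the component of $x$ with $d(x,y) < M/L$ the truncation is inactive and $d_e(x,y) \le L\,d(x,y)$. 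Choosing $M=1$, both observations happen at a common scale $R \asymp c$ with constant $C \asymp c^{-1}$. Unwinding the supremum description of $d_e$ immediately upgrades the distance bound $d_e \le C\,d$ to the existence, for every prescribed $\infty$-negligible family $\Gamma$, of a curve in $\Gamma^c$ with length at most $(C+\varepsilon)d(x,y)$, which is precisely (2).

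For the Poincaré statements, I would invoke \Cref{prop_DURAND} locally. If (2) holds at scale $R$, each ball $B(x_0,R/5)$ with its restricted measure is globally very $\infty$-thick quasiconvex with constant $C$; infinitesimal doubling delivers a uniform doubling constant on sufficiently small balls, so \Cref{prop_DURAND} applies and yields the weak $(1,\infty)$-Poincaré inequality up to a scale. Conversely, if the measure is doubling up to a scale and the weak $(1,\infty)$-Poincaré inequality holds up to a scale, then on every sufficiently small ball both hypotheses of \Cref{prop_DURAND} are met, and its implication from (1) to (2) yields very $\infty$-thick quasiconvexity inside the ball, giving (2) quantitatively.
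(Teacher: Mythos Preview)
Your argument for $(2)\Rightarrow(1)$ is correct and in fact cleaner than the paper's: since you start from the \emph{very} $\infty$-thick version, you can work directly with a Newtonian representative $\widehat u$, avoid both its exceptional curve family and the curves that see $\{g_u>\|g_u\|_\infty\}$, and read off the local Lipschitz estimate pointwise. The paper instead passes through \Cref{prop:thick:to:Hajlasz}, which is written for the weaker $\infty$-thick property and therefore needs a Lusin--Egoroff/density-point approximation.

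The gap is in $(1)\Rightarrow(2)$. You test $(1)$ against $v_x=\min\{d_e(x,\cdot),M\}$, but to place $v_x$ in $W^{1,\infty}(Z)$ you need three facts you have not established: measurability of $d_e(x,\cdot)$; that the constant $1$ is an $\infty$-weak upper gradient of $d_e(x,\cdot)$; and that the Lipschitz representative supplied by $(1)$ actually vanishes at the single point $x$. The second point is the most serious: for a \emph{fixed} null set $B$ the path distance $d_{\Gamma_B^+}(x,\cdot)$ has $1$ as an upper gradient because $\Gamma_B^+$ is closed under concatenation, but $d_e$ is an uncountable supremum of such distances, and the exceptional curve families need not be uniformly controlled. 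In the paper, the reduction of $\widehat d$ to a single $d_{\Gamma_B^+}$ (\Cref{thm:essentialdistance}) is proved \emph{using} very $\infty$-thick quasiconvexity, so invoking it here is circular. The third point is also not innocent: the Lipschitz representative agrees with $v_x$ only almost everywhere, and you have no a priori control of $d_e(x,\cdot)$ on a neighbourhood of $x$. The paper avoids all three issues by arguing by contradiction with positive-measure sets (\Cref{prop:Hajlasz:to:thick}): assuming $\Gamma(E,F;C')$ is $\infty$-negligible, it builds a concrete test function $u=\min\{w,v\}$ with $v$ a perturbed path distance from $E_r$, cites the measurability explicitly, and exploits that $u\equiv 0$ on the \emph{positive-measure} set $E_r$ and is large on $F_r$; the Lipschitz representative must then respect these values almost everywhere, which suffices.

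Your Poincar\'e arguments also fail. Very $\infty$-thick $(C,R)$-quasiconvexity of $Z$ does not descend to a ball $B(x_0,R/5)$ with restricted distance and measure, since the curves produced by $(2)$ may leave the ball; and infinitesimal doubling does not give a uniform doubling constant at any scale, so \Cref{prop_DURAND} cannot be applied ballwise. The paper instead deduces the Poincar\'e inequality up to a scale directly from the Haj\l asz-gradient bound of \Cref{prop:thick:to:Hajlasz}, without any doubling, and for the converse uses \Cref{prop:quasiconvexity_2}, a localized version of the standard curves-from-Poincar\'e argument that genuinely requires doubling up to a scale.
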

\Cref{thm:equivalence} (2) implies that, up to a scale, $Z$ is quantitatively well-connected; the dependence of $C$ and $R$ on $c$ is discussed in \Cref{rem:asymptoticbehaviour} and in the special case $c = 1$ in  \Cref{prop:sharpness} below, as well as in \Cref{sec:essentialdistance:proofs}.

One of the main novelties of our \Cref{thm:equivalence} compared to \Cref{prop_DURAND} is to consider infinitesimally doubling spaces instead of doubling ones. Our method of proof of \Cref{thm:equivalence} shows that the doubling condition is not necessary anywhere in the argument, and, in fact, the Lebesgue differentiation theorem is enough to derive geometric consequences of \eqref{eq:equivalenceofnorms}.

\begin{remark}\label{rem:potential:consequences}
\Cref{thm:equivalence} (2) has potential theoretic consequences. Indeed, it implies that every equivalence class in $N^{1,\infty}(Z)$ contains a unique locally Lipschitz function. The equivalence with \Cref{thm:equivalence} (1) yields that the unique representative is, in fact, Lipschitz. Since an equivalence class in $N^{1,\infty}(Z)$ is closed under modifications in a set negligible for Sobolev $\infty$-capacity, it follows that every point has positive Sobolev $\infty$-capacity.
\end{remark}

An interesting byproduct of \Cref{thm:equivalence} is the following result, where we characterize the situation when the spaces $\LIP^{\infty}(Z)$ and $ W^{1,\infty}(Z)$ are isometric. The proof is given in \Cref{rem:asymptoticbehaviour} and follows from a careful tracking of the dependence of the constants appearing in the proof of \Cref{thm:equivalence}.  
\begin{theorem}\label{prop:sharpness}
Let $Z$ be a locally complete metric measure space with a reference measure that is infinitesimally doubling, and finite and positive on all balls. Then $\LIP^{\infty}(Z) = W^{1,\infty}(Z)$ isometrically if and only if $Z$ is very $\infty$-thick $C$-quasiconvex for every $C>1$.
\end{theorem}

We emphasize that metric measure spaces whose connected components accumulate cannot satisfy the conclusion of \Cref{thm:equivalence} (2), such as the complementary components of the standard Sierpi\'nski carpet in \Cref{carpet:intro}. On the other hand, when $Z$ is quasiconvex, more can be said. This is our  third theorem.

\begin{theorem}\label{thm:bounded:doubling}
Let $Z$ be a locally complete metric measure space with a reference measure that is infinitesimally doubling, and finite and positive on all balls. Then the following are quantitatively equivalent:
\begin{enumerate}
    \item $\LIP^{\infty}(Z) = W^{1,\infty}(Z)$ and $Z$ is quasiconvex;
    \item $Z$ is very $\infty$-thick quasiconvex;
    \item $\LIP^{\infty}(Z) = W^{1,\infty}(Z)$ with comparable energy seminorms.
\end{enumerate}
Under any one of these assumptions, $Z$ supports a weak $(1,\infty)$-Poincaré inequality.
\end{theorem}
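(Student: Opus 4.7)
The strategy is a three-step cycle $(2) \Rightarrow (3) \Rightarrow (1) \Rightarrow (2)$, using \Cref{thm:equivalence} as a black box to translate between the condition $\LIP^{\infty}(Z) = W^{1,\infty}(Z)$ (which, by the bounded inverse theorem, automatically carries the norm lower bound appearing in \Cref{thm:equivalence}) and local very $\infty$-thick quasiconvexity up to some scale. The final weak $(1,\infty)$-Poincar\'e assertion would then follow by combining the scale-restricted Poincar\'e inequality supplied by \Cref{thm:equivalence} with the global quasiconvexity of~(1) through a standard chaining.

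For $(2) \Rightarrow (3)$, I would integrate an upper gradient along a good curve. Given $u \in W^{1,\infty}(Z)$ with upper gradient $g$, the family $\Gamma_{0}$ of curves on which the upper gradient inequality fails is $\infty$-negligible. Condition~(2) produces, for every $x, y \in Z$, a curve $\gamma$ from $x$ to $y$ of length at most $C d(x, y)$ outside $\Gamma_{0}$, so
\[
|u(x) - u(y)| \leq \int_{\gamma} g\,ds \leq C d(x, y) \|g\|_{L^{\infty}}.
\]
This yields $\lip(u) \leq C \|g_{u}\|_{L^{\infty}}$, which combined with the standard norm-one embedding $\LIP^{\infty}(Z) \hookrightarrow W^{1,\infty}(Z)$ gives~(3).

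For $(1) \Rightarrow (2)$, I would globalize the local property from \Cref{thm:equivalence}. Let $C_{0} \geq 1$ and $R > 0$ be such that $Z$ is very $\infty$-thick $(C_{0}, R)$-quasiconvex, and let $L$ denote the quasiconvexity constant. Given $x, y \in Z$ and an $\infty$-negligible family $\Gamma$, take a quasiconvex curve $\alpha$ from $x$ to $y$ of length at most $L d(x, y)$, partition it at points $x = x_{0}, x_{1}, \dots, x_{n} = y$ with $d(x_{i}, x_{i+1}) < R$ and $\sum_{i} d(x_{i}, x_{i+1}) \leq \mathrm{length}(\alpha)$, and on each piece apply the local condition to obtain a replacement curve $\beta_{i}$ of length at most $C_{0} d(x_{i}, x_{i+1})$ avoiding a suitable enlargement of $\Gamma$. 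Their concatenation has length at most $C_{0} L d(x, y)$ and, after the standard bookkeeping ensuring the enlargement of $\Gamma$ is $\infty$-negligible and closed under the relevant sub-/super-curve relations, avoids $\Gamma$.

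The main step is $(3) \Rightarrow (1)$: extracting a genuine length-distance bound from the analytic comparison of seminorms. \Cref{thm:equivalence} applied to~(3) yields that $Z$ is very $\infty$-thick $(C, R)$-quasiconvex up to some scale $R$, so the rectifiable path-components of $Z$ are open. If some such component $A$ were proper, then $\chi_{A}$ would admit the constant $0$ as an upper gradient (no rectifiable curve crosses between $A$ and $Z \setminus A$); the case $d(A, Z \setminus A) > 0$ would force $\lip(\chi_{A}) > 0 = C \|g_{\chi_{A}}\|_{L^{\infty}}$, contradicting comparable seminorms, whereas $d(A, Z \setminus A) = 0$ would make $\chi_{A}$ discontinuous and hence not in $\LIP^{\infty}(Z) = W^{1,\infty}(Z)$. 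Thus $Z$ is rectifiably path-connected and the length distance $d_{\mathrm{int}}$ is finite everywhere. For each $x \in Z$ and $M > 0$, the truncation $u_{M}(z) = \min\{d_{\mathrm{int}}(x, z), M\}$ is bounded and admits the constant $1$ as an upper gradient, so~(3) forces $\lip(u_{M}) \leq C$. Taking $M$ larger than $d_{\mathrm{int}}(x, y)$ gives $d_{\mathrm{int}}(x, y) \leq C d(x, y)$, establishing $C$-quasiconvexity; together with $\LIP^{\infty}(Z) = W^{1,\infty}(Z)$, this yields~(1) and closes the cycle.
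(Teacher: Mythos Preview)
Your cycle $(2)\Rightarrow(3)\Rightarrow(1)\Rightarrow(2)$ matches the paper's, and your arguments for $(2)\Rightarrow(3)$ (integrate the upper gradient along a good curve) and $(1)\Rightarrow(2)$ (chain along a quasiconvex curve, which is exactly \Cref{lem:thick-to-verythick}) are essentially the paper's own, stated more directly.

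The step $(3)\Rightarrow(1)$ is where you diverge. The paper observes that comparability of energy seminorms means the constant $2^{-1}C\mathscr{E}_\infty(u)$ is a Haj\l asz gradient of every $u\in W^{1,\infty}(Z)$ \emph{at every scale}, and then feeds this into \Cref{prop:Hajlasz:to:thick} with $R$ arbitrary to get very $\infty$-thick $(C',R)$-quasiconvexity for all $R>0$; quasiconvexity drops out immediately. Your route instead pulls local very $\infty$-thick quasiconvexity from \Cref{thm:equivalence} as a black box, shows $Z$ is rectifiably connected via the $\chi_A$ argument, and then tests against the truncated intrinsic distance $u_M=\min\{d_{\mathrm{int}}(x,\cdot),M\}$. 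This is correct: the local quasiconvexity makes $d_{\mathrm{int}}(x,\cdot)$ locally Lipschitz (hence continuous and measurable), the constant $1$ is an upper gradient of $u_M$, and the seminorm comparison forces $\lip(u_M)\le C$, giving $d_{\mathrm{int}}\le C\,d$. The paper's approach is shorter because it reuses the machinery already built for \Cref{thm:equivalence}; yours is more self-contained and has the virtue of making the quasiconvexity constant explicit via a concrete test function.

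There is one genuine soft spot: your derivation of the global weak $(1,\infty)$-Poincar\'e inequality by ``combining the scale-restricted Poincar\'e inequality \dots\ with the global quasiconvexity through a standard chaining'' is not justified in this generality. The usual telescoping/chaining argument that promotes a local Poincar\'e inequality to a global one requires control of ratios $\mu(B_i)/\mu(B_{i+1})$ along the chain, i.e.\ doubling of $\mu$ (this is precisely why \Cref{cor:bounded:doubling:PI} carries a doubling hypothesis). Here $\mu$ is only infinitesimally doubling. The fix is immediate once you have (2): repeat your $(2)\Rightarrow(3)$ curve-integration argument inside an arbitrary ball $B(x,r)$ to get, for almost every pair $y,z\in B(x,r)$,
\[
|u(y)-u(z)|\le C\,d(y,z)\,\|\rho_u\|_{L^\infty(B(x,Cr))}\le 2C\,\diam B(x,r)\,\|\rho_u\|_{L^\infty(B(x,Cr))},
\]
and average to obtain the Poincar\'e inequality at every scale with $\lambda=C$. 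This is exactly what the paper does (it refers back to the end of the proof of \Cref{thm:equivalence}).
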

Observe, in particular, that \Cref{thm:bounded:doubling} implies that the connectivity assumption in \Cref{prop_DURAND} (3) is a consequence of the energy seminorm comparability. Thus \Cref{thm:bounded:doubling} leads to a strengthening of \Cref{prop_DURAND}.

We emphasize that the quasiconvexity assumption in \Cref{thm:bounded:doubling} (1) cannot be relaxed to connectivity. Indeed, the domain
\begin{equation}\label{eq:nonquasiconvexdomain}
    \Omega=\R^2 \setminus (-\infty,0] \times [-1,1]
\end{equation}
serves as a counterexample. The closure $\overline{\Omega}$ illustrates that the quasiconvexity is needed even in the complete case. 

A combination of \Cref{thm:equivalence,thm:bounded:doubling} also leads to a local-to-global phenomenom of the weak $(1,\infty)$-Poincaré inequality for quasiconvex metric spaces equipped with a doubling measure. To the best of our knowledge, the result is new.
\begin{corollary}\label{cor:bounded:doubling:PI}
Let $Z$ be a quasiconvex locally complete metric measure space such that the reference measure is doubling, and finite and positive on all balls. Then $Z$ supports a weak $(1,\infty)$-Poincaré inequality if and only if $Z$ supports a weak $(1,\infty)$-Poincaré inequality up to some scale.
\end{corollary}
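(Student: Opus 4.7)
The forward implication is immediate, since a global weak $(1,\infty)$-Poincar\'e inequality is in particular one up to any scale. For the converse, the strategy is to combine the second half of \Cref{thm:equivalence} with \Cref{thm:bounded:doubling}, bridged by a patching argument that leverages the global quasiconvexity of $Z$ to upgrade local very $\infty$-thick quasiconvexity to its global counterpart.

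Since the reference measure is globally doubling, it is in particular doubling up to some scale. Combined with the assumed weak $(1,\infty)$-Poincar\'e inequality up to some scale, the second half of \Cref{thm:equivalence} furnishes constants $C \ge 1$ and $R > 0$ such that $Z$ is very $\infty$-thick $(C,R)$-quasiconvex. In view of \Cref{thm:bounded:doubling}, it then suffices to promote this local property to its global counterpart, i.e., to show that $Z$ is (globally) very $\infty$-thick quasiconvex.

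To this end, fix $x,y \in Z$ with $d(x,y)>0$, an $\infty$-negligible family $\Gamma_0$, and let $L \ge 1$ denote the global quasiconvexity constant of $Z$. I would first choose a nonnegative Borel $\rho$ admissible for $\Gamma_0$ with $\|\rho\|_{L^\infty(Z,\mu)} < (2CL\,d(x,y))^{-1}$, and set $E := \{\rho > \|\rho\|_{L^\infty(Z,\mu)}\}$, a $\mu$-null Borel set. The family $\Gamma_E$ of rectifiable curves $\gamma$ with $\mathcal{H}^1(\gamma \cap E) > 0$ is itself $\infty$-negligible, as witnessed by the admissible Borel function $\infty \cdot \mathbf{1}_E$, whose $L^\infty$-norm is zero. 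Now select an $L$-quasiconvex curve $\alpha$ joining $x$ to $y$ and subdivide it by points $x=z_0,z_1,\dots,z_N=y$ with $d(z_{i-1},z_i) \le R/(2C)$. Applying local very $\infty$-thick $(C,R)$-quasiconvexity inside $B(z_{i-1}, R)$ with the negligible family $\Gamma_E$ produces a curve $\gamma_i$ from $z_{i-1}$ to $z_i$ of length at most $C\,d(z_{i-1},z_i)$ with $\gamma_i \notin \Gamma_E$. The concatenation $\beta := \gamma_1 \ast \cdots \ast \gamma_N$ then joins $x$ to $y$, has length at most $CL\,d(x,y)$, and since $\mathcal{H}^1(\beta \cap E) = 0$ it satisfies $\int_\beta \rho\,ds \le \|\rho\|_{L^\infty(Z,\mu)}\,\ell(\beta) < 1/2$; hence $\beta \notin \Gamma_0$. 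Consequently $Z$ is globally very $\infty$-thick quasiconvex, and \Cref{thm:bounded:doubling} delivers the global weak $(1,\infty)$-Poincar\'e inequality.

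\textbf{Main obstacle.} The least mechanical step is the reduction from avoidance of an abstract $\infty$-negligible family $\Gamma_0$ to avoidance of a single $\mu$-null set $E$, executed by choosing $\rho$ admissible of sufficiently small $L^\infty$-norm and extracting its super-norm set. Once this reduction is in hand, the length accounting along the subdivided quasiconvex curve $\alpha$ and the admissibility estimate propagate routinely through the concatenation.
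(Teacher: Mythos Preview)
Your proof is correct and follows the same route as the paper: use the last part of \Cref{thm:equivalence} to obtain very $\infty$-thick $(C,R)$-quasiconvexity, then combine with the global quasiconvexity to reach the hypotheses of \Cref{thm:bounded:doubling}. Your patching argument is precisely \Cref{lem:thick-to-verythick} applied with $\Omega = Z$ and $R' = \infty$; the paper invokes that lemma rather than reproving it inline. One cosmetic simplification: since by definition an $\infty$-negligible family admits an admissible $\rho$ with $\|\rho\|_{L^\infty(Z)} = 0$, your bound $\|\rho\|_{L^\infty} < (2CL\,d(x,y))^{-1}$ is automatic and the estimate $\int_\beta \rho\,ds \le \|\rho\|_{L^\infty}\,\ell(\beta)$ is simply zero; equivalently, as the paper notes after \Cref{de:negligible}, one may pass directly to a null Borel set $N$ with $\Gamma_0 \subset \Gamma_N^{+}$ and avoid $\rho$ altogether.
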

Again the domain \eqref{eq:nonquasiconvexdomain} and its closure illustrate that quasiconvexity of $Z$ is necessary for the self-improvement to hold.

Next, we describe two applications of our methods.
\subsection{Infinity harmonic functions}
We now move the discussion towards $\infty$-harmonic functions. To this end, given an open set $\Omega \subset Z$, we say that $u \in W^{1,\infty}(Z)$ is \emph{$\infty$-harmonic} in $\Omega$ if whenever $V \subset \Omega$ is an open set and $v \in W^{1,\infty}(Z)$ is such that $u = v$ in $Z \setminus V$, it holds that
\begin{align}\label{eq:localminimalenergy}
    \mathscr{E}_{\infty}( u|_{V} ) \leq \mathscr{E}_{\infty}( v|_{V} )
\end{align}
where $\mathscr{E}_{\infty}(h)$ refers to the energy of $h \in W^{1,\infty}(V)$ as defined in \eqref{def:N1P}. For our purposes, $\infty$-harmonic functions will only be considered whenever $W^{1,\infty}(Z)=\LIP^{\infty}(Z)$ hence we may use Lipschitz representatives when considering the comparison in \eqref{eq:localminimalenergy}. Thus, the boundary value problem is well-posed whenever $\Omega$ has a non-empty boundary. Our techniques lead to the following existence result.
\begin{theorem}\label{thm:infty-harmonic-solution}
Let $Z$ be a complete and locally compact metric measure space with a reference measure that is infinitesimally doubling, and finite and positive on all balls. Suppose that $\LIP^{\infty}(Z) = W^{1,\infty}(Z)$ and $\Omega \subset Z$ is a precompact domain such that $\overline{\Omega} \setminus \Omega \neq \emptyset$. Then, for every  $g \in \LIP^{\infty}(Z)$, there exists a unique function $u \in \LIP^{\infty}(Z)$ such that $u=g$ on $ Z \setminus \Omega$ and $u$ is  $\infty$-harmonic in $\Omega$.
\end{theorem}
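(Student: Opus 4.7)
The plan is to adapt the strategy of Durand-Cartagena, Jaramillo, and Shanmugalingam \cite{Dur:Jar:Sha:19}: pass to an essential length distance $d_e$ in which $\infty$-harmonicity is transformed into the classical notion of an absolutely minimizing Lipschitz extension, and then invoke the AMLE existence and uniqueness theory for complete, locally compact length spaces from \cite{Juut:02,Per:Schr:Sheff:Wil:09}.

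First, I would apply \Cref{thm:equivalence} to produce the essential distance $d_e$ on $Z$ with the three properties listed in the outline: it is a length distance that is uniformly locally bi-Lipschitz to $d$; the Sobolev spaces $W^{1,\infty}(Z,d)$ and $W^{1,\infty}(Z,d_e)$ are isometrically isomorphic; and $W^{1,\infty}(Z,d_e)$ coincides isometrically with $\LIP^{\infty}(Z,d_e)$. Since local bi-Lipschitz equivalence preserves both completeness and local compactness, $(Z,d_e)$ is a complete, locally compact length space. Moreover, precompactness of $\Omega$ in $(Z,d)$, together with a finite covering of $\overline{\Omega}$ by balls on which $d$ and $d_e$ are comparable, gives precompactness of $\Omega$ in $(Z,d_e)$ as well, and the boundary $\overline{\Omega}\setminus\Omega$ remains non-empty in either metric.

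Second, I would verify the key reformulation: for every open $V \subset \Omega$ and every $u \in \LIP^{\infty}(Z)$, the energy $\mathscr{E}_{\infty}(u|_V)$ computed with respect to $d$ equals the Lipschitz constant $\lip_{d_e}(u|_V)$ computed with respect to $d_e$. This is the localization of property (3) in the outline and should follow by applying the isometric identification $W^{1,\infty}(V,d_e)=\LIP^{\infty}(V,d_e)$ on each open set $V$, combined with the fact that $d_e$ is a length distance so that the Lipschitz constant of $u$ on $V$ agrees with the local Lipschitz constant. Once this identification is established, $\infty$-harmonicity of $u$ on $\Omega$ with respect to \eqref{eq:localminimalenergy} translates verbatim into the AMLE property of $u$ with respect to $(Z,d_e)$ relative to the closed set $A = Z \setminus \Omega$ with data $g|_A$: a competitor $v$ for AMLE on $V \subset Z \setminus A$ produces, after gluing with $u$ on $Z \setminus V$, a competitor for the $\infty$-energy minimization, and conversely.

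Third, given this dictionary, I would apply the existence result of Juutinen \cite{Juut:02} (or equivalently the approach of Perera-Schramm-Sheffield-Wilson \cite{Per:Schr:Sheff:Wil:09}) in the complete, locally compact length space $(Z,d_e)$ to the Lipschitz boundary datum $g|_{Z\setminus \Omega}$, producing an AMLE $u \in \LIP^{\infty}(Z,d_e)=\LIP^{\infty}(Z,d)$ that agrees with $g$ on $Z\setminus \Omega$; reversing the dictionary yields the desired $\infty$-harmonic extension. Uniqueness then follows from the AMLE comparison principle in length spaces, which requires precisely the local compactness and length structure just established, together with the fact that $\overline{\Omega}$ is compact in $(Z,d_e)$. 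The main obstacle, and the place where care is needed, is the localized identification of $\mathscr{E}_{\infty}(u|_V)$ with $\lip_{d_e}(u|_V)$ for arbitrary open $V$, since \Cref{thm:equivalence} is phrased globally; passing to open subsets requires that the isomorphism between $W^{1,\infty}$ and bounded Lipschitz functions, and the length property of $d_e$, survive restriction, which is where the precompactness of $\Omega$ and the uniform local bi-Lipschitz control provided by \Cref{thm:equivalence} become essential.
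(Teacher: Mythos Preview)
Your overall plan matches the paper's: pass to the essential length distance, translate $\infty$-harmonicity into AMLE, and invoke \cite{Per:Schr:Sheff:Wil:09}. The paper also restricts to the connected component $U$ containing $\Omega$ and uses \Cref{thm:essentialdistance} (rather than \Cref{thm:equivalence} directly) to obtain properness of $(U,\widehat d)$ and the isometric identifications you need.

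There is, however, a genuine gap in your second step. The identification $\mathscr{E}_{\infty}(u|_V)=\lip_{d_e}(u|_V)$ for arbitrary open $V\subset\Omega$ is \emph{false} in general, and your proposed justification breaks down in two places. First, the isometric equality $W^{1,\infty}(V,d_e)=\LIP^{\infty}(V,d_e)$ does not follow from the corresponding equality on $Z$: what \Cref{thm:essentialdistance} gives is that $W^{1,\infty}(V,d)$ and $W^{1,\infty}(V,\widehat d)$ are isometric, and that $(U,\widehat d)$ has the $\infty$-weak Fubini property, but the latter is not inherited by open subsets $V$. Second, even though $d_e$ is a length distance on $U$, the Lipschitz constant of $u|_V$ with respect to the \emph{restricted} $d_e$ can exceed the supremum of the local Lipschitz constant whenever $V$ is not geodesically convex (think of a ``C''-shaped set in $\mathbb{R}^2$). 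So $\mathscr{E}_{\infty}(u|_V)$ matches the Lipschitz constant for the \emph{intrinsic} length metric on $V$, not the restriction of $d_e$, and these differ on general $V$.

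The paper bridges this gap exactly as \cite{Dur:Jar:Sha:19} does: it passes through the intermediate notion of \emph{strong-AMLE}, which is formulated using the intrinsic metric on each $V$ and is therefore equivalent to $\infty$-harmonicity by \cite[Lemma 4.6 and Remark 4.7]{Dur:Jar:Sha:19}. The equivalence of strong-AMLE and AMLE is a separate fact requiring properness and the $\infty$-weak Fubini property of $(U,\widehat d)$, established via \cite[Propositions 4.1 and 5.8]{Juut:Shan:06} (the doubling assumption there being replaced by the properness from \Cref{thm:essentialdistance}). Your proposal correctly flags this step as ``the main obstacle,'' but the resolution you sketch does not work; you need the strong-AMLE detour.
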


In Euclidean spaces, it is a deep theorem that  a Lipschitz function $u \colon \R^n\to\R$ being $\infty$-harmonic in some open set $V\subset\R^n$ is equivalent to being an AMLE function of $u|_{\R^n\setminus V}$. AMLEs were introduced by Aronsson \cite{Aron:67}, and we refer the reader to the monograph \cite{Aron:Cran:Juu:04} for an exposition of the Euclidean theory. AMLEs were later adapted to length spaces in \cite{Juut:02,Per:Schr:Sheff:Wil:09} in the following way: Given a non-empty closed set $A \subset Z$ and a Lipschitz function $g \colon A \rightarrow \mathbb{R}$, a Lipschitz extension $u \colon Z \rightarrow \mathbb{R}$ is an \emph{absolutely minimizing Lipschitz extension} (AMLE) of $g$ if $g|_{A} = u|_{A}$ and the Lipschitz constants satisfy $\lip( u|_{V} ) = \lip( u|_{\partial V} )$ whenever $V \subset Z \setminus A$ is a non-empty open set. In a length space, a standard argument shows that $u \colon Z \rightarrow \mathbb{R}$ is an AMLE of $g$ if and only if $u|_{ A } = g|_{ A }$ and $\lip( u|_{V} ) \leq \lip( v|_{V} )$ whenever $v \colon \overline{V} \rightarrow \mathbb{R}$ is a Lipschitz extension of $u|_{ \partial V } \colon \partial V \rightarrow \mathbb{R}$ and $V \subset Z \setminus A$ is a non-empty open set.

The $\infty$-harmonicity in the formulation above was introduced by Juutinen and Shanmugalingam in \cite{Juut:Shan:06} where the authors considered connections between $\infty$-harmonicity and AMLEs in metric measure spaces. More recently, the connection was further refined by Durand-Cartagena, Jaramillo, and Shanmugalingam in \cite{Dur:Jar:Sha:19}. In $\infty$-PI spaces, the authors introduced the \emph{essential distance} as a way to construct a bi-Lipschitz comparable length distance such that $\infty$-harmonic functions in the old distance are transformed to AMLEs in the new distance and vice versa. \Cref{thm:equivalence} allows the adaptation of their strategy in greater generality, thereby leading to \Cref{thm:infty-harmonic-solution}.

%
%
\subsection{Sobolev extension sets}
In this section, we apply \Cref{thm:equivalence} to \emph{Sobolev extension sets}. To set the stage, we recall the following theorem by Haj{\l}asz, Koskela, and Tuominen.

\begin{proposition}{\cite[Theorem 7]{HKT2008}}\label{prop:HKT2008}
    Let $\Omega\subset\R^n$ be a domain. Then the following conditions are equivalent:
    \begin{enumerate}
        \item for every $u\in W^{1,\infty}(\Omega)$, there exists $v\in W^{1,\infty}(\R^n)$ such that $v|_{\Omega}=u$;
        \item $\Omega$ is $(C,R)$-quasiconvex for some $C \geq 1$ and $R > 0$: for every $x_0 \in \Omega$, every $x, y \in \Omega \cap B( x_0, R )$ can be joined with a $C$-quasiconvex curve $\gamma \colon [0,1] \rightarrow \Omega$.
  \end{enumerate}
\end{proposition}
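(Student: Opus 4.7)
The plan is to recover this classical result as an application of \Cref{thm:equivalence}, applied to $Z = \Omega$ equipped with the Euclidean distance and the restriction of Lebesgue measure. The standing hypotheses are met: $\Omega$ is locally complete because it is open in $\R^n$, and Lebesgue measure is infinitesimally doubling with positive and finite measure on every ball of the metric space $\Omega$. Moreover, since $\R^n$ itself is an $\infty$-PI space, $W^{1,\infty}(\R^n) = \LIP^{\infty}(\R^n)$ with comparable norms, a fact I will use freely.

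For the implication (2) $\Rightarrow$ (1), I would argue directly. Given $u \in W^{1,\infty}(\Omega)$, the ACL characterization of Euclidean Sobolev functions, combined with the $(C,R)$-quasiconvexity, produces a continuous representative $\tilde u$ satisfying $|\tilde u(x) - \tilde u(y)| \leq C\|\nabla u\|_{\infty}|x-y|$ whenever $|x-y| < R$. Using $|\tilde u| \leq \|u\|_{\infty}$ to handle pairs at larger scale, $\tilde u$ is globally Lipschitz on $\Omega$ with constant $\max(C\|\nabla u\|_{\infty}, 2\|u\|_{\infty}/R)$. A McShane--Whitney extension then produces $v \in \LIP^{\infty}(\R^n) = W^{1,\infty}(\R^n)$ with $v|_{\Omega} = \tilde u$.

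For the harder direction (1) $\Rightarrow$ (2), the extension hypothesis supplies, for each $u \in W^{1,\infty}(\Omega)$, a Lipschitz representative obtained by restricting its Euclidean extension to $\Omega$. Hence $W^{1,\infty}(\Omega) = \LIP^{\infty}(\Omega)$ as sets of equivalence classes. Both spaces are Banach, and the canonical inclusion $\LIP^{\infty}(\Omega) \hookrightarrow W^{1,\infty}(\Omega)$ is a bounded linear bijection of Banach spaces: injectivity is automatic since two continuous functions on an open subset of $\R^n$ that agree Lebesgue-a.e. must agree everywhere. The open mapping theorem then promotes this set-theoretic equality to the quantitative norm comparison \eqref{eq:equivalenceofnorms}. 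This is exactly condition (1) of \Cref{thm:equivalence}, whose conclusion yields that $\Omega$ is very $\infty$-thick $(C,R)$-quasiconvex for some $C \geq 1$ and $R > 0$; in particular, $\Omega$ is $(C,R)$-quasiconvex in the sense of (2).

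The main obstacle is precisely the passage from the set-theoretic extension hypothesis to the quantitative norm comparison \eqref{eq:equivalenceofnorms}: the extension property as stated does not automatically produce extensions whose $W^{1,\infty}(\R^n)$-norms depend linearly on $\|u\|_{W^{1,\infty}(\Omega)}$, and the open mapping theorem is what bypasses the need to construct a linear bounded extension operator by hand. Once this is in place, \Cref{thm:equivalence} does all of the remaining geometric work.
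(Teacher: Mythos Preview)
The paper does not give its own proof of this statement; it is quoted verbatim from \cite[Theorem~7]{HKT2008} and then generalized to metric measure spaces in \Cref{thm:Main_inf.ext.}. Your proposal is correct and coincides with how the paper's machinery recovers the Euclidean result: specializing \Cref{thm:Main_inf.ext.} to $Z=\R^n$ (which is very $\infty$-thick quasiconvex) gives exactly the equivalence, and the proof of \Cref{thm:Main_inf.ext.} follows the same two-step logic you outline---McShane extension for one direction, and $\LIP^{\infty}(\Omega)=W^{1,\infty}(\Omega)$ plus \Cref{thm:equivalence} for the other.

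One minor comment on your $(2)\Rightarrow(1)$: invoking the ACL characterization to produce the local Lipschitz bound along a curve in $\Omega$ is a little oblique. The cleaner statement is that on any ball $B\subset\Omega$ the classical identification $W^{1,\infty}(B)=\LIP^{\infty}(B)$ already gives a representative that is locally $\|\nabla u\|_{L^\infty(\Omega)}$-Lipschitz; summing along a partition of the quasiconvex curve into short arcs, each contained in a ball inside $\Omega$, yields the estimate you want. This is also how the paper's general framework proceeds: in \Cref{thm:Main_inf.ext.} the role of this local step is played by \Cref{lem:thick-to-verythick}, which upgrades the quasiconvexity of $\Omega$ to very $\infty$-thick quasiconvexity using that of the ambient $\R^n$, after which \Cref{thm:equivalence} produces the Lipschitz representative. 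Your direct Euclidean argument is a legitimate shortcut.
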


A domain (resp. set) satisfying \Cref{prop:HKT2008} (1) is called a \emph{$W^{1,\infty}$-extension domain (resp. set)}.  Note that, in this case, the restriction operator $R:W^{1,\infty}(\R^n)\to W^{1,\infty}(\Omega)$, defined as $R(v)=v|_{\Omega}$, is surjective. Hence, by the bounded inverse theorem, there exists a constant $C>0$ such that any $u\in W^{1,\infty}(\Omega)$ admits an extension $v\in W^{1,\infty}(\R^n)$ with $\|v\|_{W^{1,\infty}(\R^n)}\leq C\|u\|_{W^{1,\infty}(\Omega)}$. See also \Cref{def:sob_ext_set}.

We formulate the following metric version of \Cref{prop:HKT2008}.
\begin{theorem}\label{thm:Main_inf.ext.}
Let $Z$ be a locally complete metric measure space with a reference measure that is infinitesimally doubling, and finite and positive on all balls. Suppose that $Z$ is very $\infty$-thick $(C,R)$-quasiconvex for some $C\geq 1$, $R>0$ and $\Omega \subset Z$ is open. Then the following are equivalent.
\begin{enumerate}
    \item $\Omega$ is a $W^{1,\infty}$-extension set;
    \item $\Omega$ is very $\infty$-thick $( C', R' )$-quasiconvex for some $C' \geq 1, R' > 0$;
    \item $\Omega$ is $(C'',R'')$-quasiconvex for some $C''\geq 1$, $R''>0$.
\end{enumerate}
In fact, if (3) holds, then (2) holds for $C' = CC''$ and $R' = R''$.
\end{theorem}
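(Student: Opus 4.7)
The plan is to establish $(2) \Leftrightarrow (3)$ by a direct geometric construction, and then deduce $(1) \Leftrightarrow (2)$ from \Cref{thm:equivalence} applied to both $Z$ and the subspace $(\Omega, d|_\Omega, m|_\Omega)$; the latter inherits local completeness from the openness of $\Omega$ in $Z$, as well as infinitesimal doubling and finiteness/positivity of the measure on small balls.

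The implication $(2) \Rightarrow (3)$ is immediate, since a single curve drawn from the thick family witnesses quasiconvexity. For the quantitatively sharp converse $(3) \Rightarrow (2)$, fix $x_0 \in \Omega$, $x, y \in \Omega \cap B(x_0, R'')$, and an $\infty$-negligible family $\Gamma$ in $Z$. By the standard characterization of $\infty$-negligibility, one reduces to a family of the form $\Gamma_E := \{ \gamma : \mathcal{H}^1(\gamma^{-1}(E)) > 0 \}$ for some Borel null set $E \subseteq Z$ that contains $\Gamma$ among rectifiable curves of uniformly bounded length. Fix a $C''$-quasiconvex curve $\gamma_0 \colon [0, L] \to \Omega$ from $x$ to $y$ parameterized by arc length and set $\delta := \dist(\gamma_0([0, L]), Z \setminus \Omega) > 0$, which is positive by compactness of $\gamma_0([0, L])$ and openness of $\Omega$ (with $\delta = \infty$ if $\Omega = Z$). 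Choose $\epsilon < \min(\delta / C, R)$ and a partition $0 = t_0 < \cdots < t_N = L$ of mesh at most $\epsilon$, and set $x_i := \gamma_0(t_i)$, so that $d(x_i, x_{i+1}) \leq \epsilon < R$. The very $\infty$-thick $(C, R)$-quasiconvexity of $Z$ applied with the family $\Gamma_E$ yields $\sigma_i$ joining $x_i$ to $x_{i+1}$ with $\ell(\sigma_i) \leq C d(x_i, x_{i+1}) \leq C \epsilon < \delta$ and $\sigma_i \notin \Gamma_E$; in particular, the image of $\sigma_i$ lies in $B(x_i, \delta) \subseteq \Omega$. The concatenation $\sigma := \sigma_0 \star \cdots \star \sigma_{N-1}$ is then a curve in $\Omega$ from $x$ to $y$ of length at most $C \ell(\gamma_0) \leq C C'' d(x, y)$ satisfying $\mathcal{H}^1(\sigma^{-1}(E)) = \sum_i \mathcal{H}^1(\sigma_i^{-1}(E)) = 0$, so $\sigma \notin \Gamma_E \supseteq \Gamma$. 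This delivers $(2)$ with the stated constants $C' = C C''$ and $R' = R''$.

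For $(2) \Rightarrow (1)$, \Cref{thm:equivalence} applied to $\Omega$ yields $\LIP^\infty(\Omega) = W^{1,\infty}(\Omega)$. Given $u \in W^{1,\infty}(\Omega)$, pick its bounded Lipschitz representative $\tilde u$ with Lipschitz constant $L$ and supremum norm $M$, form the McShane extension $v(x) := \inf_{y \in \Omega}(\tilde u(y) + L d(x, y))$, and truncate into $[-M, M]$ to obtain a bounded $L$-Lipschitz function $\bar v$ on $Z$ with $\bar v|_\Omega = \tilde u$. Since \Cref{thm:equivalence} applied to $Z$ gives $\LIP^\infty(Z) = W^{1,\infty}(Z)$, the function $\bar v$ is the desired Sobolev extension. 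Conversely, for $(1) \Rightarrow (2)$: every $u \in W^{1,\infty}(\Omega)$ admits an extension $v \in W^{1,\infty}(Z) = \LIP^\infty(Z)$, so $v|_\Omega$ is a bounded Lipschitz representative of $u$, giving $W^{1,\infty}(\Omega) = \LIP^\infty(\Omega)$. The bounded inverse theorem, as discussed after \eqref{eq:equivalenceofnorms}, then provides the constant $c > 0$ for $\Omega$, and the $(1) \Rightarrow (2)$ direction of \Cref{thm:equivalence} applied to $\Omega$ delivers $(2)$.

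The main obstacle is the geometric step $(3) \Rightarrow (2)$, namely upgrading a single quasiconvex curve inside $\Omega$ to one drawn from an $\infty$-non-negligible family. The key insight is that $\infty$-negligibility for uniformly bounded-length rectifiable families is captured by a fixed Borel null set $E$ that every curve crosses with positive length, and that this crossing length is additive under concatenation, allowing membership in $\Gamma_E$ to be tested piecewise. Compactness of $\gamma_0$ inside open $\Omega$ then confines the $Z$-perturbations $\sigma_i$ of length at most $C \epsilon$ to $\Omega$.
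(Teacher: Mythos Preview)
Your proof is correct and follows essentially the same approach as the paper. The paper isolates your geometric step $(3)\Rightarrow(2)$ as a separate lemma (\Cref{lem:thick-to-verythick}), with the same partition-and-perturb argument and the same constants $C'=CC''$, $R'=R''$; the $(1)\Leftrightarrow(2)$ argument via McShane extension plus truncation in one direction and restriction of a Lipschitz representative in the other, both mediated by \Cref{thm:equivalence}, matches the paper's proof almost verbatim.
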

Under the assumptions of \Cref{thm:Main_inf.ext.}, being a $W^{1,\infty}$-extension set is equivalent to the equality $\LIP^{\infty}(\Omega) = W^{1,\infty}(\Omega)$ because every element of $\LIP^{\infty}(\Omega)$ can be extended to $\LIP^{\infty}(Z)$ using standard arguments.

While \Cref{thm:Main_inf.ext.,thm:equivalence} imply that a $W^{1,\infty}$-extension domain supports a weak $\infty$-Poincaré inequality up to some scale, it is not clear if the converse is true; see \Cref{ques:PI}. Indeed, the restriction of $\mu$ to $\Omega$ does not have to be doubling up to some scale in general as follows e.g. from considering certain cusps as in \Cref{ex:2} (see also \cite{BKZheng}). The failure of the doubling property is in contrast to $W^{1,p}$-extension domains when $p \in [1,\infty)$, cf. \cite{HKT2008:B,HKT2008}.

\Cref{thm:Main_inf.ext.} applies to a large class of domains in $p$-PI spaces for $p \in [1,\infty]$. Indeed, every bounded domain in a PI space can be approximated from inside and outside by uniform domains which are, in particular, quasiconvex domains, cf. \cite{Raj:21}. Thus \Cref{thm:Main_inf.ext.} (3) holds for such domains.

In the recent work \cite{Garc:Iko:Zhu:23}, we considered Banach-valued $W^{1,p}$-extension sets for exponents $p \in (1,\infty)$. More specifically, we were interested in determining whether a domain $\Omega \subset \mathbb{R}^n$ is a Banach-valued $W^{1,p}$-extension domain if and only if it is a real-valued $W^{1,p}$-extension domain, see e.g. \cite{Kos:99,Bjo:Sha:07,HKT2008:B} for background on the topic. We verified the equivalence for $p \in [n,\infty)$ while the case $p \in [1,n)$ remains open, cf. \cite[Question 1.1]{Garc:Iko:Zhu:23}. The case $p = \infty$ is simpler and holds under mild assumptions.
\begin{proposition}\label{prop:Banach-valued_version}
Let $Z$ be a locally compact metric measure space with a reference measure that is finite and positive on all balls, and that $\LIP^{\infty}(Z) = W^{1,\infty}(Z)$. Then the following are equivalent for a locally complete $\Omega \subset Z$ for which $\Omega$ or $Z \setminus \Omega$ has finite Nagata dimension:
\begin{enumerate}
 \item $\Omega$ is a real-valued $W^{1,\infty}-$extension set.
 \item $\Omega$ is a $\mathbb V$-valued $W^{1,\infty}-$extension set for every Banach space $\mathbb{V}$.
 \end{enumerate}
Moreover, there exists a linear extension operator in the cases $(1)$ and $(2)$.
\end{proposition}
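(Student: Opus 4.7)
The direction $(2) \Rightarrow (1)$ follows immediately by taking $\mathbb{V} = \mathbb{R}$. The substance is $(1) \Rightarrow (2)$, and the strategy is to reduce to Banach-valued Lipschitz extension and then invoke the Lang--Schlichenmaier theorem, which states that a metric space of finite Nagata dimension has the Banach-valued Lipschitz extension property for subsets, realized by a linear operator whose operator norm depends only on the Nagata dimension.

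First, every $u \in W^{1,\infty}(\Omega;\mathbb{V})$ has a bounded Lipschitz representative. For real-valued $u$, the hypothesis (1) combined with $\LIP^{\infty}(Z) = W^{1,\infty}(Z)$ yields $v \in W^{1,\infty}(Z) = \LIP^{\infty}(Z)$ with $v|_\Omega = u$ as equivalence classes, so $v|_\Omega$ is a Lipschitz representative of $u$. The $\mathbb{V}$-valued case reduces to the scalar one via the duality identity $\|u(x) - u(y)\|_{\mathbb{V}} = \sup_{\|\phi\| \leq 1} |\phi(u(x)) - \phi(u(y))|$ with $\phi \in \mathbb{V}^*$. Local completeness of $\Omega \subset Z$ then lets one extend the Lipschitz representative uniquely and Lipschitz-ly to $\overline{\Omega} \subset Z$ by uniform continuity into the Banach space $\mathbb{V}$; this completion step is trivially linear in $u$.

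Second, assume $Z \setminus \Omega$ has finite Nagata dimension. Then the closed subspace $\overline{Z \setminus \Omega}$ inherits it, and Lang--Schlichenmaier applies there to extend $u|_{\partial \Omega}$, with $\partial \Omega = \overline{\Omega} \cap \overline{Z \setminus \Omega}$, to a Lipschitz map on $\overline{Z \setminus \Omega}$. The glued map on $Z = \overline{\Omega} \cup \overline{Z \setminus \Omega}$ is well-defined, and Lipschitz because any pair $x \in \overline{\Omega}$, $y \in \overline{Z \setminus \Omega}$ can be joined by a curve crossing $\partial \Omega$ at some $z$ with $d(x,z) + d(z,y) \lesssim d(x,y)$; this gating property is a consequence of the thick quasiconvexity of $Z$ implied by $\LIP^{\infty}(Z) = W^{1,\infty}(Z)$ via \Cref{thm:equivalence}. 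Linearity of the full extension operator follows from linearity of Lang--Schlichenmaier, of the continuity extension, and of the gluing.

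The main obstacle is the case where $\Omega$, rather than $Z \setminus \Omega$, has finite Nagata dimension. Here one cannot apply Lang--Schlichenmaier directly with the complement as ambient space. Instead, one exploits the finite Nagata dimension of $\overline{\Omega}$ to construct a Whitney-type decomposition of $Z \setminus \overline{\Omega}$ indexed by a net in $\partial \Omega$, and defines the extension via a Lipschitz partition of unity $\widetilde{u}(x) = \sum_i \phi_i(x)\, u(p_i)$, with $p_i \in \partial \Omega$ associated to the $i$-th piece of the cover. Local compactness of $Z$ and the Nagata-dimension bound on the covering provide the Lipschitz control across $\partial \Omega$, and linearity of the partition-of-unity construction carries over. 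Verifying the global Lipschitz estimate in this second case is the main technical point; everything else is routine bookkeeping.
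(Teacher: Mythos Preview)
Your Case 1 argument has a genuine gap. You justify the gating property---that for $x\in\overline{\Omega}$, $y\in\overline{Z\setminus\Omega}$ there is $z\in\partial\Omega$ with $d(x,z)+d(z,y)\lesssim d(x,y)$---by invoking \Cref{thm:equivalence} to obtain thick quasiconvexity of $Z$. But \Cref{thm:equivalence} requires the reference measure to be infinitesimally doubling, and \Cref{prop:Banach-valued_version} does \emph{not} assume this; its hypotheses on $Z$ are only local compactness, a measure that is finite and positive on balls, and $\LIP^{\infty}(Z)=W^{1,\infty}(Z)$. Without infinitesimal doubling you cannot conclude any quasiconvexity from $\LIP^{\infty}(Z)=W^{1,\infty}(Z)$, and without quasiconvexity the gluing of two Lipschitz pieces along $\partial\Omega$ need not be globally Lipschitz. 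So the black-box use of Lang--Schlichenmaier on $\overline{Z\setminus\Omega}$ followed by gluing does not close.

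The paper avoids the gluing issue entirely and treats both Nagata cases uniformly. In either case it extracts from the \emph{proofs} of \cite[Theorems 1.5 and 1.6]{La:Sch:05} a Whitney-type covering $(B_i)_{i\in I}$ of $Z\setminus\overline{\Omega}$ with the two standard properties (diameter comparable to distance to $\overline{\Omega}$, bounded multiplicity at the right scale), and then defines the extension directly by $F(x)=\sum_{i\in I_x}\overline{\sigma}_i(x)\,f(z_i)$ with $z_i\in\overline{\Omega}$ nearly closest to $B_i$. The Lipschitz estimate for the glued map $E_{\mathbb V}f$ is proved from the Whitney estimates alone, with no appeal to any geometric connectivity of $Z$. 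This is essentially what you propose in your Case~2, and the point is that the same construction works verbatim in Case~1 as well; the two Lang--Schlichenmaier theorems simply provide two different mechanisms (finite Nagata dimension of the complement, resp.\ of the closed set) for producing the \emph{same} Whitney covering of $Z\setminus\overline{\Omega}$.

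A secondary remark: your reduction ``$u\in W^{1,\infty}(\Omega;\mathbb V)$ has a Lipschitz representative via the duality identity'' glosses over the issue that the exceptional null set for $\phi\circ u$ may a priori depend on $\phi\in\mathbb V^*$. The paper handles this by passing through Haj\l asz gradients (\Cref{prop:M=W_Banach-valued} and \Cref{lem:Banach_valued}): a single constant function is simultaneously a Haj\l asz gradient of every $\phi\circ u$, hence of $u$ itself, which then yields the Lipschitz representative.
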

The Nagata dimension assumption implies that $Z \setminus \overline{\Omega}$ has a Whitney-type decomposition with bounded multiplicity, cf. \cite[Section 5]{La:Sch:05}, which allows the construction of the extension operator by standard methods. We recall that every metric space which is metrically doubling, or, in particular, admits a doubling measure, has finite Nagata dimension \cite[Lemma 2.3]{La:Sch:05}. See \cite{La:Sch:05} for further information on Nagata dimension.

\subsection{Structure of the paper}
The paper is structured as follows. In \Cref{sec:prelim}, we present the definitions and main properties of the space of bounded Lipschitz functions $\LIP^{\infty}(Z)$, the Haj{\l}asz--Sobolev space $M^{1,\infty}(Z)$, the Newton--Sobolev space $N^{1,\infty}(Z)$, and the Sobolev space $W^{1,\infty}(Z)$. In \Cref{sec:thick}, we prove \Cref{thm:equivalence,thm:bounded:doubling,prop:sharpness}, and also \Cref{cor:bounded:doubling:PI}. In fact, in \Cref{prop:thick:to:Hajlasz,prop:Hajlasz:to:thick,prop:quasiconvexity_2}, we consider in detail the relations between the equality $\LIP^{\infty}(Z)=W^{1,\infty}(Z)$, the $\infty$-thick quasiconvexity, and the weak $(1,\infty)$-Poincaré inequality. We construct the essential distance and provide the proof of \Cref{thm:infty-harmonic-solution} in \Cref{sec:essentialdistance:proofs}. Results about real-valued and Banach-valued Sobolev extension sets, in particular, \Cref{thm:Main_inf.ext.} and \Cref{prop:Banach-valued_version},  appear in \Cref{sec:extension,sec:banach}, respectively. Finally, we construct several examples in \Cref{sec:examples} satisfying the conclusion of \Cref{thm:equivalence} but which are not PI spaces.

\section{Preliminaries}\label{sec:prelim}

A \emph{metric measure space} is a triple $( Z, d_Z, \mu_{Z} )$, where $(Z,d_Z)$ is a metric space and $\mu_{Z}$ is a Borel regular measure on $Z$ with $0 < \mu_{Z}( B(z,r) ) < \infty$ for every $z \in Z$ and $0<r<\fz$. We sometimes simply write $Z$ for the triple $( Z, d_Z, \mu_Z )$. In particular, we always assume that $(Z,d_Z)$ is separable. When there is no chance for confusion, we omit the subscripts from $d_{Z}$ and $\mu_{Z}$.  

We say that $N \subset Z$ is \emph{negligible} if $\mu_Z( N ) = 0$. A property holds \emph{almost everywhere} in $F \subset Z$,  or for almost every $z \in F$, if there exists a negligible set $N \subset F$ such that the property holds for every point in $F \setminus N$.

We denote the \emph{closed} and \emph{open} balls of $Z$, respectively, by $$\overline{B}_Z(z,r) \coloneqq \left\{ y \in Z \colon d(z,y) \leq r \right\};\quad \quad B_Z(z,r) \coloneqq \left\{ y \in Z \colon d(z,y) < r \right\}.$$ We sometimes drop the subscript $Z$ and also $(z,r)$, and use the notation $\lambda B$ to denote the ball $B( z, \lambda r )$  for $0<\lambda<\fz$.

In what follows, unless otherwise stated, any measurable subset of $\R^n$ is seen as a metric measure space with the Euclidean distance and the restricted Lebesgue measure. Similarly, for a given measurable subset $F\subset Z$ of a metric measure space $Z$ we may consider $F$ itself as a metric measure space with the restricted measure and distance coming from $Z$.

Above and below, we say that a metric measure space $Z$ is \emph{infinitesimally doubling} if
\begin{equation*}
    \limsup_{ r \rightarrow 0^{+} } \frac{ \mu( B(z,2r) ) }{ \mu( B(z,r) ) } < \infty
    \quad\text{for $\mu$-almost every $z \in Z$}.
\end{equation*}
We say that $\Omega \subset Z$ is \emph{infinitesimally doubling} if $B(z,2r)$ and $B(z,r)$ can be replaced by $B(z,2r) \cap \Omega$ and $B(z,r) \cap \Omega$ in the $\limsup$ above. Observe that if $Z$ is infinitesimally doubling, then every open set $\Omega \subset Z$ is infinitesimally doubling.  The key role of the infinitesimally doubling assumption in our main results is that it allows us to invoke the Lebesgue differentiation theorem (see \cite[Section 3.4]{HKST2015}).

A measure is \emph{$C_\mu$-doubling up to scale $R'$} if 
$$\mu( B(z,2r) ) \leq C_\mu \mu(B(z,r))$$ 
holds for every $z \in Z$ and $r \in (0,R')$. A measure is \emph{doubling} if we have $R' = \infty$. Clearly, being doubling up to a scale is much stronger than being infinitesimally doubling. For example, every measurable set of $\R^n$ equipped with the restricted $\mathcal L^n$-measure is infinitesimally doubling by the Lebesgue differentiation theorem while the doubling property can fail in domains, e.g. by considering a domain with an outward cusp, see \Cref{ex:2}.

We use the notation $\mathcal L^{\infty}(Z)$ for the space of essentially bounded functions $u\colon Z\to [-\infty,\infty]$. By identifying $u_1,u_2\in\mathcal L^{\infty}(Z)$ whenever $u_1=u_2$ almost everywhere, we obtain the quotient space $L^{\infty}(Z)$ with the norm given by $$\|u\|_{L^{\infty}(Z)}=\inf \{ \lambda\in\R\colon \mu\left( \{x\in Z\colon |u(x)|>\lambda\} \right) =0  \} .$$

\subsection{Lipschitz function spaces}
 For a metric measure space $Z$, a map $f\colon Z\to \mathbb R$ is said to be \emph{$L$-Lipschitz} for some constant $L\geq 0$ if \begin{equation}\label{eq:Lips_cond.}
|f(x)-f(y)| \leq Ld(x,y) \quad\text{for every $x,y\in Z$.}
\end{equation}
The smallest possible $L$ is denoted by $\lip(f)$. The collection of such functions is denoted by $\LIP( Z)$ and we endow the space with the seminorm $\|f\|=\lip (f)$.

\begin{definition}\label{def:lip}
The Banach space of bounded Lipschitz maps $\LIP^{\infty}(Z)$ is defined as the collection of all $f \in \LIP( Z )$ for which the following norm is finite:
$$\|f\|_{\LIP^{\infty}(Z)}= \|f\|_{L^{\infty}(Z)}+\lip(f).$$  
We refer to $\lip(f)$ as the (Lipschitz) energy seminorm.
\end{definition}

We claim that if $N \subset Z$ is negligible and $f \colon Z \setminus N \rightarrow \mathbb{R}$ is $L$-Lipschitz, then $f$ has a unique $L$-Lipschitz extension to $Z$. Indeed, since every open set has positive measure, the closure of $Z \setminus N$ coincides with $Z$. As $f$ maps Cauchy sequences to Cauchy sequences and $\mathbb{R}$ is complete, the existence of the $L$-Lipschitz extension $\widehat{f} \colon Z \rightarrow \mathbb{R}$ follows. In this manner, almost everywhere defined Lipschitz functions have unique extensions to $\LIP( Z)$.

\subsection[Hajlasz--Sobolev spaces]{Haj\l{}asz--Sobolev spaces}
Consider a metric measure space $Z$. Given a map $u \colon Z \rightarrow \mathbb{R}$, we say that a function $g \colon Z \rightarrow \left[0, \infty\right]$ is a \emph{Haj\l{}asz upper gradient} of $u$ if $g$ is measurable and there exists $N \subset Z$ with $\mu( N ) = 0$ such that
\begin{equation}\label{eq:Hineq}
    |u(x)-u(y)|
    \leq d(x,y)\lf(g(x)+g(y)\r) \quad \text{for every $x,y \in Z \setminus N$}.
\end{equation}
If, instead, \eqref{eq:Hineq} holds for every $x_0 \in Z$ and $x, y \in B( x_0, R ) \setminus N$ for a negligible set $N$ and $R > 0$ independent of $x_0$, we say that $g$ is a Haj\l{}asz gradient of $u$ \emph{up to scale $R$}.

The class of all Haj\l{}asz upper gradients of $u$ is denoted by $\mathcal{D}(u)$ and the Haj\l{}asz upper gradients of $u$ up to scale $R$ are denoted by $\mathcal{D}^{R}(u)$. We denote $\mathcal{D}_{\infty}(u) = \mathcal{D}(u) \cap \mathcal{L}^{\infty}(Z)$ and $\mathcal{D}_{\infty}^{R}(u) = \mathcal{D}^{R}(u) \cap \mathcal L^{\infty}(Z)$.  

Observe that if $u_1, u_{2}\colon Z\to\mathbb R$, $g \in \mathcal{D}^{R}( u_1 )$ and $u_2 = u_1$ almost everywhere, then $g \in \mathcal{D}^{R}( u_2 )$. In particular, if $u \in L^{\infty}( Z )$, the collections $\mathcal{D}( u )$ and $\mathcal{D}^R( u )$  can be unambiguously defined as those Haj\l{}asz gradients of \emph{some} (an arbitrary) representative of $u$. This motivates the following definition.
\begin{definition}\label{de:Mspace}
The Haj\l{}asz--Sobolev space $M^{1,\infty}(Z)$ is 
    \[  M^{1, \infty}(Z)\coloneqq\lf\{u\in L^{\infty}(Z) \colon \mathcal D_\infty(u)\neq\emptyset\r\},  \]
equipped with the norm 
\begin{equation}\label{eq:M-norm}
     \|u\|_{M^{1, \infty}(Z)}\coloneqq\|u\|_{L^\infty(Z)}+\inf_{g\in\mathcal D_\infty(u)}\|g\|_{L^\infty(Z)}. 
\end{equation}
For a parameter $R \in (0,\infty]$, we define the \emph{Haj\l{}asz energy up to scale $R$} by
\[\mathcal{E}_{\infty}^{R}( u ) \coloneqq \inf_{g\in\mathcal D_\infty^{R}(u)}\|g\|_{L^\infty(Z)}.\]
\end{definition}
One could renorm $M^{1,\infty}(Z)$ by using the energies $\mathcal{E}_{\infty}^{R}( u )$ instead of  $\mathcal{E}_{\infty}^{\infty}( u )$ on \eqref{eq:M-norm} and obtain an equivalent Banach space. This is a consequence of the following elementary lemma.
\begin{lemma}\label{lemm:local:to:global}
Let $u \in L^{\infty}(Z)$. If $R > 0$ and $g \in \mathcal{D}_{\infty}^{R}( u )$, then $\max\left\{ g, R^{-1} \|u\|_{ L^{\infty}(Z) } \right\} \in \mathcal{D}_{\infty}(u)$.
\end{lemma}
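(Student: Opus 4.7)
The plan is to verify the Hajłasz inequality \eqref{eq:Hineq} for $\tilde{g} \coloneqq \max\{g, R^{-1}\|u\|_{L^{\infty}(Z)}\}$ globally, splitting the argument according to whether $d(x,y)<R$ or $d(x,y) \geq R$. The role of the constant floor $R^{-1}\|u\|_{L^{\infty}(Z)}$ is precisely to absorb the trivial bound $|u(x)-u(y)| \leq 2\|u\|_{L^{\infty}(Z)}$ for far-apart pairs and rewrite it in Hajłasz form.

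First, I fix a representative of $u$ (still denoted $u$) with $|u(z)| \leq \|u\|_{L^{\infty}(Z)}$ for every $z \in Z \setminus N_0$, where $N_0$ is a negligible set. Let $N$ be the negligible set associated with $g$ in the definition of a Hajłasz upper gradient up to scale $R$, and set $\widetilde{N} \coloneqq N \cup N_0$, which is negligible. I then pick arbitrary $x,y \in Z \setminus \widetilde{N}$.

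In the case $d(x,y) < R$, I take $x_0 \coloneqq x$ in the definition of $\mathcal{D}_{\infty}^{R}(u)$: then $x,y \in B(x_0,R) \setminus N$, so
\[
|u(x)-u(y)| \leq d(x,y)\bigl(g(x)+g(y)\bigr) \leq d(x,y)\bigl(\tilde{g}(x)+\tilde{g}(y)\bigr),
\]
using $\tilde{g} \geq g$ pointwise. In the case $d(x,y) \geq R$, I estimate directly, using $|u(x)|,|u(y)| \leq \|u\|_{L^{\infty}(Z)}$ and $\tilde{g} \geq R^{-1}\|u\|_{L^{\infty}(Z)}$:
\[
|u(x)-u(y)| \leq 2\|u\|_{L^{\infty}(Z)} \leq 2 R^{-1}\|u\|_{L^{\infty}(Z)}\, d(x,y) \leq d(x,y)\bigl(\tilde{g}(x)+\tilde{g}(y)\bigr).
\]

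Finally, $\tilde{g}$ is measurable as the pointwise maximum of the measurable function $g$ and a constant, and it lies in $\mathcal{L}^{\infty}(Z)$ since $g$ does. Combining the two cases establishes the Hajłasz inequality on $(Z \setminus \widetilde{N})^2$, so $\tilde{g} \in \mathcal{D}_{\infty}(u)$. There is no real obstacle; the only point to be careful about is to fix a pointwise representative of $u$ bounded by $\|u\|_{L^{\infty}(Z)}$ outside a negligible set before invoking the triangle inequality in the second case.
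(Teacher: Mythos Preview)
Your proof is correct and follows essentially the same approach as the paper: split into the cases $d(x,y)<R$ and $d(x,y)\geq R$, using the local Haj\l{}asz inequality in the first case and the trivial $L^\infty$ bound in the second, after enlarging the exceptional set to include $\{|u|>\|u\|_{L^\infty(Z)}\}$.
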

\begin{proof}
Let $N$ be as in the definition of $g \in \mathcal{D}_{\infty}^{R}(u)$ and denote $N_0 = N \cup \left\{ |u| > \|u\|_{ L^{\infty}(Z) } \right\}$. Consider $x, y \in Z \setminus N_0$. If $d( x, y ) < R$, then
\begin{equation*}
    | u(x)-u(y) | \leq (g(x)+g(y)) d(x,y).
\end{equation*}
Otherwise $d(x,y) \geq R$ and
\begin{equation*}
    | u(x) - u(y) | \leq 2\|u\|_{ L^{\infty}(Z) } \leq \frac{ 2 \|u\|_{ L^{\infty}(Z) } }{ R } d(x,y).
\end{equation*}
So the claim follows.
\end{proof}

The following result shows that the Haj\l{}asz--Sobolev space $M^{1, \infty}(Z)$ and the space of bounded Lipschitz maps are isomorphic.  We say that two Banach spaces $(\mathbb V,|\cdot|)$ and $(\mathbb W,\|\cdot\|)$ are $L$-isomorphic for some $L\geq 1$ if there is an isomorphism $T\colon\mathbb V\to\mathbb W$ so that $L^{-1}|u|\leq \|Tu\|\leq L|u|$
for all $u\in \mathbb V$. In the case $L=1$ we say that both spaces are isometric. 
\begin{lemma}\label{lemm:Lip:Haj}
Let $Z$ be a metric measure space. Then $\LIP^{\infty}( Z )$ and $M^{1,\infty}( Z ) $ are $2$-isomorphic as Banach spaces. More precisely, every $u\in \LIP^{\infty}(Z)$ belongs to an equivalence class in $M^{1,\infty}(Z)$ and every $u\in M^{1,\infty}(Z)$ has a representative $\widehat u\in \LIP^{\infty}(Z) $ that furthermore satisfies
\begin{equation*}
    2^{-1}\lip( \widehat{u} ) \in \mathcal{D}_{\infty}(u)
    \quad\text{and}\quad
    \lip( \widehat{u} )
    \leq
    2\|g\|_{ L^{\infty}(Z) }
    \quad\text{for $g \in \mathcal{D}_\infty(u)$.}
\end{equation*}
\end{lemma}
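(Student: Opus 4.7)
The plan is to exhibit the isomorphism by explicit constructions in both directions, relying on the elementary observation that for an $L$-Lipschitz function the constant function $L/2$ serves as a Haj\l asz upper gradient, and conversely that a bounded Haj\l asz upper gradient forces $u$ to be Lipschitz on a full-measure subset, which then extends to all of $Z$ by the claim proved immediately after \Cref{def:lip}.

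\textbf{From $\LIP^{\infty}(Z)$ into $M^{1,\infty}(Z)$.} Given $u \in \LIP^{\infty}(Z)$, let $L = \lip(u)$ and set $g \equiv L/2$. Then for all $x,y \in Z$ one has
\begin{equation*}
    |u(x) - u(y)| \leq L\, d(x,y) = d(x,y)(g(x) + g(y)),
\end{equation*}
so $g \in \mathcal D_\infty(u)$ with $\|g\|_{L^\infty(Z)} = L/2$. In particular, $u \in M^{1,\infty}(Z)$ and $\|u\|_{M^{1,\infty}(Z)} \leq \|u\|_{L^\infty(Z)} + \lip(u)/2 \leq \|u\|_{\LIP^\infty(Z)}$. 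This also proves the second bullet with equality on the constant Haj\l asz gradient $\lip(\widehat u)/2$ once the representative $\widehat u$ is produced below.

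\textbf{From $M^{1,\infty}(Z)$ into $\LIP^{\infty}(Z)$.} Given $u \in M^{1,\infty}(Z)$ and $g \in \mathcal D_\infty(u)$, choose a representative of $u$ and let $N$ be the negligible set from \eqref{eq:Hineq} for this representative. Enlarging $N$ by the negligible set $\{g > \|g\|_{L^\infty(Z)}\}$, we obtain for all $x, y \in Z \setminus N$
\begin{equation*}
    |u(x) - u(y)| \leq d(x,y)(g(x) + g(y)) \leq 2\|g\|_{L^\infty(Z)} d(x,y),
\end{equation*}
so $u$ is $2\|g\|_{L^\infty(Z)}$-Lipschitz on $Z \setminus N$. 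The extension claim from the paragraph after \Cref{def:lip} now produces a unique Lipschitz extension $\widehat u \colon Z \to \R$ with $\lip(\widehat u) \leq 2\|g\|_{L^\infty(Z)}$. Since $Z \setminus N$ is dense in $Z$ (because every ball has positive measure) and $|u| \leq \|u\|_{L^\infty(Z)}$ on a further full-measure subset of $Z \setminus N$, continuity gives $\|\widehat u\|_{L^\infty(Z)} = \|u\|_{L^\infty(Z)}$. Thus $\widehat u \in \LIP^\infty(Z)$. Taking the infimum over $g \in \mathcal D_\infty(u)$ yields $\lip(\widehat u) \leq 2\|g\|_{L^\infty(Z)}$ for every $g$, which is the displayed inequality in the statement. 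Applying the first direction to $\widehat u$ gives $\lip(\widehat u)/2 \in \mathcal D_\infty(\widehat u) = \mathcal D_\infty(u)$, completing the first displayed identity.

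\textbf{Norm comparison and isomorphism.} The map $T \colon u \mapsto \widehat u$ is well-defined (independent of the chosen representative and of $g$) and linear, and it inverts the natural inclusion from the first step. From the above estimates,
\begin{equation*}
    \|u\|_{M^{1,\infty}(Z)} \leq \|\widehat u\|_{\LIP^\infty(Z)} \leq \|u\|_{L^\infty(Z)} + 2 \inf_{g \in \mathcal D_\infty(u)} \|g\|_{L^\infty(Z)} \leq 2 \|u\|_{M^{1,\infty}(Z)},
\end{equation*}
which gives the $2$-isomorphism.

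\textbf{Main obstacle.} The only delicate point is the careful treatment of exceptional sets: one must enlarge the defining negligible set $N$ to include $\{g > \|g\|_{L^\infty(Z)}\}$ so that the pointwise bound $g(x)+g(y) \leq 2\|g\|_{L^\infty(Z)}$ actually holds on the complement, and then invoke the Lipschitz extension fact (which in turn uses that $Z \setminus N$ is dense because balls have positive measure) to pass from an almost-everywhere inequality to an everywhere-defined Lipschitz map. Once this is done, the norm bookkeeping is immediate.
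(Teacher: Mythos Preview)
Your proof is correct and follows essentially the same approach as the paper's: both directions use that the constant $2^{-1}\lip(u)$ is a Haj\l asz upper gradient, and that a bounded Haj\l asz gradient forces Lipschitz continuity on a dense full-measure set which then extends uniquely to $Z$. Your version is more explicit about enlarging the null set by $\{g>\|g\|_{L^\infty(Z)}\}$ and about the norm bookkeeping, but the argument is the same.
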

\begin{proof}
Observe that every $u\in \LIP^{\infty}( Z )$ satisfies $2^{-1}\lip(u) \in \mathcal{D}_{\infty}(  u )$. So  $u\in M^{1,\infty}( Z )$ with 
$$ \| u \|_{ M^{1,\infty}( Z ) }\leq\|  u \|_{ \LIP^{\infty}( Z ) }.$$ On the other hand, given $u\in M^{1,\infty}( Z ) $, fix a representative $\widehat u \in M^{1,\infty}(Z )$ and $g \in \mathcal{D}_{\infty}( \widehat u )$. Then there exists a negligible set $N \subset Z$ for which
\begin{equation*}
    | \widehat u(x) - \widehat u(y) |
    \leq
    2 \| g \|_{L^{\infty}(Z)} d(x,y)
    \quad\text{for every $x, y \in Z \setminus N$}.
\end{equation*}
Observe that every ball in $Z$ has positive measure so $Z \setminus N$ is dense in $Z$. Hence $\widehat u$ admits a unique $2\|g\|_{ L^{\infty}(Z) }$-Lipschitz extension to $Z$ that we still denote by $\widehat{u}$. The claim follows.
\end{proof}

\subsection{Newton--Sobolev spaces}\label{subsec_sob}
We continue on working with a metric measure space $Z$. A \emph{curve} is a continuous function $\gamma \colon \left[a, b\right] \rightarrow Z$ whose domain we  denote by $I_{\gamma} = \left[a,b\right]$ and its range by $|\gamma|=\{\gamma(t) \colon t\in [a,b]\}$. A curve $\gamma$ is \emph{rectifiable} if
\begin{equation}
    \label{eq:rectifiable}
    \ell( \gamma )
    =
    \sup¤
    \sum_{ i = 1 }^{ N } d( \gamma(t_i), \gamma( t_{i+1} ) )
    <
    \infty,
\end{equation}
where the supremum is taken over all finite partitions $t_1 = a,$ $t_{i} \leq t_{i+1}$, $t_{N+1} = b$. For a rectifiable curve, we denote
\begin{equation}
    \label{eq:metricspeed:pointwise}
    \| \gamma' \|(t)
    =
    \lim_{ s \rightarrow t }
        \frac{ d( \gamma(t), \gamma(s) ) }{ |t-s| },
\end{equation}
whenever the limit exists; the limit exists for $\mathcal{L}^{1}$-almost every $t \in \left[a,b\right]$ where $\mathcal{L}^{1}$ is the Lebesgue measure on $\mathbb{R}$. The mapping $t \mapsto \| \gamma' \|(t)$ is called the \emph{metric speed} of $\gamma$.

Every rectifiable curve $\gamma\colon [a, b]\to Z$ has a \emph{unit speed reparametrization} $\widehat{\gamma} \colon [0, l(\gamma)]\to Z$ with $\| \widehat{\gamma}'\|(t)  \equiv 1$ almost everywhere. The function $\widehat{\gamma}$ is the unique continuous map for which $\widehat{\gamma}( s(t) ) = \gamma( t )$ for $s(t) \coloneqq \ell( \gamma|_{ \left[a,t\right] } )$; see \cite[Eq. (5.1.6)]{HKST2015}. 
Given a Borel function $\rho \colon Z \rightarrow \left[0,\infty\right]$ and a rectifiable curve $\gamma$, we define
\begin{equation}
    \label{eq:curveintegral}
    \int_{\gamma} \rho \,ds
    \coloneqq
    \int_{ 0 }^{ \ell(\gamma) }
        \rho(\widehat{\gamma}(s))
    \,d\mathcal{L}^{1}(s).
\end{equation}
The term $\int_{\gamma} \rho \,ds$ is referred to as the \emph{(curve) integral} of $\rho$ over $\gamma$. curve integrals over constant curves are always zero. Recall that a function $\rho \colon Z \rightarrow \left[-\infty, \infty\right]$ is Borel if $\rho^{-1}( U )$ is a Borel subset of $Z$ for every open $U \subset [ -\infty, \infty ]$. The well-posedness of \eqref{eq:curveintegral} for Borel functions is proved in \cite[p. 126-127]{HKST2015}.

A rectifiable curve is \emph{not absolutely continuous} if there exists a negligible set $I \subset \left[a,b\right]$ and a Borel set $B \subset \gamma( I )$ with $\int_{\gamma} \chi_{B} \,ds > 0$. If no such $I$ and $B$ exist, we say that $\gamma$ is \emph{absolutely continuous}. For an absolutely continuous curve $\gamma \colon \left[a,b\right] \rightarrow Z$, we have
\begin{equation*}
    \label{eq:curveintegral:ABS}
    \int_{\gamma} \rho \,ds
    =
    \int_{ a }^{ b }
        ( \rho( \gamma(s) ) ) \| \gamma' \|(s)
    \,d\mathcal{L}^{1}(s),
    \quad\text{see \cite[Section 5.1]{HKST2015}}.
\end{equation*}
For every rectifiable curve $\gamma\colon[a,b]\to Z$, its unit speed reparametrization $\widehat \gamma\colon[0,\ell(\gamma)]\to  Z$ is absolutely continuous and $\|\widehat \gamma '\|(t)=1$ at almost every $t$ (see \cite[Proposition 5.1.8]{HKST2015}).

Any collection of curves $\gamma \colon I_{\gamma} \rightarrow Z$ is referred to as a \emph{curve family}, typically denoted by $\Gamma$.
\begin{definition}\label{de:negligible}
A curve family $\Gamma$ in $Z$ is \emph{$\infty$-negligible} if there exists a nonnegative Borel $\rho \in L^{\infty}( Z )$ such that $\| \rho \|_{ L^{\infty}(Z) }= 0$ and $\int_{\gamma} \rho \,ds = \infty$ for every rectifiable $\gamma \in \Gamma$.

A property holds for \emph{$\infty$-almost every} $\gamma$ if the family of curves where the property fails is $\infty$-negligible. For a Borel set $N \subset Z$, the curve family $\Gamma_{N}^{+}$ consists of the rectifiable curves $\gamma$ for which $\int_{\gamma} \chi_{N}\,ds > 0$.
\end{definition}
By definition, the collection of nonrectifiable curves is $\infty$-negligible. Moreover, any curve family containing a constant curve is \emph{not} $\infty$-negligible. Note as well that our notion of \emph{$\infty$-negligibility} coincides with the usual one defined using the \emph{$\infty$-modulus}; see \cite[Lemma 5.7]{Durand}. In particular, it follows from \Cref{de:negligible} that for a null Borel set $N\subset Z$ the family $\Gamma^+_N$ is $\infty$-negligible. In fact, it follows from \Cref{de:negligible} that a curve family $\Gamma$ is $\infty$-negligible if and only if $\Gamma \subset \Gamma^{+}_{N}$ for some negligible Borel set $N \subset Z$.

\begin{definition}
Let $u \colon Z \rightarrow \mathbb{R}$. A mapping $\rho \colon Z \rightarrow [0,\infty]$ is a \emph{$\infty$-weak upper gradient} of $u$ if $\rho$ is Borel measurable and for $\infty$-almost every nonconstant $\gamma \colon \left[a,b\right] \rightarrow Z$,
\begin{equation}\label{eq:boundaryinequality}
    |u( \gamma(a) ) - u( \gamma(b) )|
    \leq
    \int_{\gamma} \rho \,ds.
\end{equation}
If, in addition, $\rho \in \mathcal L^\infty(Z)$, we denote $\rho \in \mathcal{D}_{N,\infty}(u)$.
\end{definition}

\begin{lemma}\label{lemm:gradient:absolutecontinuity}
Let $u \colon Z \rightarrow \mathbb{R}$ and $\rho \in \mathcal{D}_{N,\infty}( u )$. There exists a $\infty$-negligible family $\Gamma_{u,\rho}$ containing $\left\{ \gamma \colon \int_{\gamma} \rho \, ds = \infty \right\}$ such that $u \circ \gamma$ is absolutely continuous and
\begin{equation}\label{eq:upper:abs}
    \| ( u \circ \gamma )' \|(t)
    \leq 
    \rho( \gamma(t) ) \| \gamma' \|(t)
    \quad\text{for almost every $t$}
\end{equation}
for every nonconstant absolutely continuous $\gamma \not\in \Gamma_{u,\rho}$
\end{lemma}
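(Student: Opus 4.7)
The plan is to first construct $\Gamma_{u,\rho}$ by enlarging the exceptional family of the upper gradient inequality so that it remains $\infty$-negligible and is simultaneously closed under restrictions to subcurves. More precisely, by definition of $\rho \in \mathcal D_{N,\infty}(u)$, there is an $\infty$-negligible family $\Gamma_0$ together with a Borel witness $\rho_0 \in \mathcal L^{\infty}(Z)$ with $\|\rho_0\|_{L^\infty(Z)}=0$ and $\int_\gamma \rho_0\,ds = \infty$ for every rectifiable $\gamma\in \Gamma_0$. Setting $N_{\rho}=\{\rho>\|\rho\|_{L^\infty(Z)}\}$ and $\tilde\rho_0 = \infty\cdot\chi_{N_\rho}$, I define $\Gamma_{u,\rho}$ to be the non-rectifiable curves together with those rectifiable curves $\gamma$ for which $\int_\gamma (\rho_0+\tilde\rho_0)\,ds=\infty$. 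Since $\rho_0+\tilde\rho_0$ has zero essential supremum, $\Gamma_{u,\rho}$ is $\infty$-negligible. It contains $\Gamma_0$ and also the family $\{\gamma : \int_\gamma \rho\,ds = \infty\}$: for a rectifiable $\gamma$ with $\int_\gamma \rho\,ds=\infty$ boundedness of $\rho$ off $N_\rho$ forces $\int_\gamma\chi_{N_\rho}\,ds>0$, hence $\int_\gamma \tilde\rho_0\,ds =\infty$.

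Next, I verify that for every nonconstant rectifiable $\gamma \colon [a,b]\to Z$ with $\gamma\notin \Gamma_{u,\rho}$ the upper gradient inequality extends to all subcurves. The key point is that if some subcurve $\gamma|_{[a',b']}$ lies in $\Gamma_0$, then because $\int_{\gamma|_{[a',b']}}\rho_0\,ds\leq \int_\gamma \rho_0\,ds$ (through the unit speed reparametrization $\widehat\gamma$, the subcurve corresponds to an interval of the domain), we would get $\int_\gamma \rho_0\,ds=\infty$ and hence $\gamma\in\Gamma_{u,\rho}$, a contradiction. Therefore $\gamma|_{[a',b']}\notin\Gamma_0$ for every $a\leq a'\leq b'\leq b$, and applying \eqref{eq:boundaryinequality} to each subcurve yields
\[
|u(\gamma(a'))-u(\gamma(b'))| \leq \int_{\gamma|_{[a',b']}}\rho\,ds
\]
for all $a\leq a' \leq b'\leq b$.

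For the final step, assume further that $\gamma$ is absolutely continuous, so that the right-hand side above equals $\int_{a'}^{b'}\rho(\gamma(t))\|\gamma'\|(t)\,dt$. Because $\gamma\notin\Gamma_{u,\rho}$ implies $\int_\gamma\rho\,ds<\infty$, the function $t\mapsto \rho(\gamma(t))\|\gamma'\|(t)$ lies in $L^1([a,b])$, and absolute continuity of its indefinite integral together with the previous inequality applied to finite collections of disjoint subintervals gives absolute continuity of $u\circ\gamma$ via a standard Vitali-type argument. The pointwise bound then follows by Lebesgue differentiation: at every Lebesgue point $t$ of $r\mapsto \rho(\gamma(r))\|\gamma'\|(r)$,
\[
\|(u\circ\gamma)'\|(t) \leq \limsup_{s\to t}\frac{1}{|t-s|}\int_{\min(s,t)}^{\max(s,t)}\rho(\gamma(r))\|\gamma'\|(r)\,dr = \rho(\gamma(t))\|\gamma'\|(t).
\]

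The main obstacle I expect is the bookkeeping in the first paragraph: the definition of $\infty$-negligibility only provides a witness for the original exceptional family, whereas the lemma requires a single family that simultaneously handles curves where $\int_\gamma\rho\,ds=\infty$, curves whose subcurves are bad, and non-rectifiable curves. Combining $\rho_0$ with the $+\infty$-multiple of the indicator of $\{\rho>\|\rho\|_{L^\infty(Z)}\}$ is the trick that handles all three at once without leaving the class $\mathcal L^{\infty}(Z)$ with zero essential supremum. The rest is a routine Vitali-plus-Lebesgue-differentiation argument.
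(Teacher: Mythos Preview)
Your argument is correct. The paper itself does not give a proof of this lemma; it simply writes ``The claim is standard. We refer the reader e.g.\ to \cite[Lemma 3.5]{Garc:Iko:Zhu:23} for a proof.'' Your plan supplies precisely the kind of standard argument the paper has in mind: enlarge the exceptional family using a single witness $\rho_0+\infty\cdot\chi_{\{\rho>\|\rho\|_{L^\infty}\}}$ so that it is $\infty$-negligible, closed under subcurves, and contains $\{\gamma:\int_\gamma\rho\,ds=\infty\}$; then derive absolute continuity of $u\circ\gamma$ and the pointwise bound \eqref{eq:upper:abs} by the usual Vitali/Lebesgue differentiation reasoning. All the bookkeeping you describe is sound, including the observation that a subcurve with $\int\rho_0=\infty$ forces the parent curve into $\Gamma_{u,\rho}$.
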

\begin{proof}
The claim is standard. We refer the reader e.g. to \cite[Lemma 3.5]{Garc:Iko:Zhu:23} for a proof.
\end{proof}

\begin{lemma}\label{lem:minimal-upper-grad.}
    For every measurable $u\colon Z\to \R$ for which $\mathcal D_{N,\infty}(u)\neq \emptyset$ there exists $\rho\in \mathcal D_{N,\infty}(u)$ such that for any other $\widetilde \rho \in \mathcal D_{N,\infty}(u)$ we have $\rho(x)\leq \widetilde \rho(x)$ almost everywhere. 
    \end{lemma}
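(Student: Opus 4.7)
The plan is to realize the minimal $\infty$-weak upper gradient as a pointwise essential infimum of the family $\mathcal{D}_{N,\infty}(u)$, obtained as the decreasing limit of a countable subfamily. This is a direct adaptation of the classical construction of minimal upper gradients in the $L^p$-setting; the main work lies in verifying that $\mathcal{D}_{N,\infty}(u)$ is closed under pointwise minima and then passing to the limit under a countable exhaustion.

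\textbf{Step 1: closure under binary minima.} I will show that if $\rho_1, \rho_2 \in \mathcal{D}_{N,\infty}(u)$, then $\min(\rho_1, \rho_2) \in \mathcal{D}_{N,\infty}(u)$. By \Cref{lemm:gradient:absolutecontinuity}, there exist $\infty$-negligible families $\Gamma_{u,\rho_1}, \Gamma_{u,\rho_2}$ outside of which $u \circ \gamma$ is absolutely continuous and $\|(u\circ\gamma)'\|(t) \leq \rho_i(\gamma(t))\|\gamma'\|(t)$ almost everywhere, for $i=1,2$. Their union is $\infty$-negligible: the characterization stated after \Cref{de:negligible} provides null Borel sets $N_1, N_2$ with $\Gamma_{u,\rho_i} \subset \Gamma_{N_i}^{+}$, so the union is contained in $\Gamma_{N_1 \cup N_2}^{+}$. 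For every nonconstant absolutely continuous curve $\gamma$ outside this union, taking the pointwise minimum in $i$ and integrating yields the upper gradient inequality for $\min(\rho_1, \rho_2)$.

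\textbf{Step 2: countable essential-infimum reduction.} Fix any $\rho_0 \in \mathcal{D}_{N,\infty}(u)$ and set $M = \|\rho_0\|_{L^\infty}$. Consider
\[
    \mathcal{F}' = \{\rho \in \mathcal{D}_{N,\infty}(u) \colon \rho \leq \rho_0 \text{ a.e.}\}.
\]
Since $\min(\rho, \rho_0) \in \mathcal{F}'$ for any $\rho \in \mathcal{D}_{N,\infty}(u)$ by Step 1, any a.e.\ lower bound among $\mathcal{F}'$ is also an a.e.\ lower bound for all of $\mathcal{D}_{N,\infty}(u)$, and $\mathcal{F}'$ is itself closed under binary minima. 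Cover $Z$ by balls $B_k = B(x_0, k)$, $k \in \mathbb{N}$, each of finite measure, and for each $(k,j) \in \mathbb{N}^2$ choose $\tau_{k,j} \in \mathcal{F}'$ with
\[
    \int_{B_k} \tau_{k,j} \,d\mu \leq I_k + \frac{1}{j}, \qquad I_k := \inf_{\tau \in \mathcal{F}'} \int_{B_k} \tau \,d\mu.
\]
Enumerate $\{\tau_{k,j}\}$ as $\{\tau_n\}_{n \in \mathbb{N}}$ and set $\sigma_n = \min(\tau_1, \ldots, \tau_n)$, which by Step 1 is a decreasing sequence in $\mathcal{F}'$ with $\int_{B_k}\sigma_n \,d\mu \to I_k$ for every $k$. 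Let $\rho := \inf_n \sigma_n$. For any $\widetilde{\rho} \in \mathcal{F}'$, the function $\min(\rho, \widetilde{\rho})$ would satisfy $\int_{B_k} \min(\rho, \widetilde{\rho}) \,d\mu \leq \int_{B_k} \rho \,d\mu = I_k$, forcing $\min(\rho, \widetilde{\rho}) = \rho$ a.e.\ on $B_k$; hence $\rho \leq \widetilde{\rho}$ almost everywhere on $Z$. The same holds for every $\widetilde{\rho} \in \mathcal{D}_{N,\infty}(u)$ by the first reduction.

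\textbf{Step 3: $\rho \in \mathcal{D}_{N,\infty}(u)$ and conclusion.} Borel measurability of $\rho$ follows from that of each $\sigma_n$, and $\|\rho\|_{L^\infty} \leq M$ is immediate. The countable union of the $\infty$-negligible exceptional families associated with the sequence $\{\sigma_n\}$ is again $\infty$-negligible by the $\Gamma_N^+$-characterization. For any nonconstant rectifiable $\gamma$ outside this union,
\[
    |u(\gamma(a)) - u(\gamma(b))| \leq \int_\gamma \sigma_n \,ds \qquad \text{for every } n,
\]
and dominated convergence with $\sigma_n \leq \sigma_1 \in \mathcal{L}^\infty(Z)$ (so $\int_\gamma \sigma_1 \,ds < \infty$) yields the same inequality with $\rho$. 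Thus $\rho \in \mathcal{D}_{N,\infty}(u)$ and is minimal almost everywhere. The main obstacle is the measure-theoretic bookkeeping in Step 2 to ensure that a single countable decreasing sequence simultaneously minimizes the integrals on every ball $B_k$; this reduces cleanly to the closure-under-minima property established in Step 1.
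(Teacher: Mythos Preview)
Your argument is correct and follows the standard route to constructing minimal weak upper gradients: closure of $\mathcal{D}_{N,\infty}(u)$ under pointwise minima via the absolute-continuity characterization, extraction of a countable minimizing sequence over an exhaustion by finite-measure balls, and stability of the upper gradient inequality under decreasing limits. The paper does not give its own proof for this lemma; it simply refers to the literature (the citation is to \cite{M2013}), so there is no substantive comparison to make.

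One small expository gap: in Step~2, the implication ``$\int_{B_k}\min(\rho,\widetilde\rho)\,d\mu \leq I_k$ forces $\min(\rho,\widetilde\rho)=\rho$ a.e.'' needs the reverse inequality $\int_{B_k}\min(\rho,\widetilde\rho)\,d\mu \geq I_k$, which is not automatic since $\rho$ is not yet known to lie in $\mathcal F'$. The clean fix is already implicit in your setup: use $\min(\sigma_n,\widetilde\rho)\in\mathcal F'$ to get $I_k \leq \int_{B_k}\min(\sigma_n,\widetilde\rho)\,d\mu$ and then pass to the limit by monotone convergence.
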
  
\begin{proof}
    See for instance \cite{M2013}.
\end{proof}
The $L^p$-equivalence class of $\rho$, from \Cref{lem:minimal-upper-grad.} is denoted by $\rho_u$ and is referred to as the {\em minimal $\infty$-weak upper gradient of $u$}. We next define the Newton--Sobolev space $N^{1,\infty}(Z)$ as follows. 
\begin{definition}\label{def:N1P}
We say $\widehat{u} \in \widehat{N}^{1,\infty}( Z )$ if $\widehat{u} \in \mathcal{L}^\infty(Z)$ and $\mathcal{D}_{N,\infty}( \widehat{u} ) \neq \emptyset$. We denote
\begin{equation*}
    \| \widehat{u} \|_{ \widehat{N}^{1,\infty}(Z) }
    \coloneqq
    \| \widehat{u} \|_{L^\infty(Z)}
    +
    \inf_{ \rho \in \mathcal{D}_{N,\infty}( \widehat{u} ) }\| \rho\|_{L^\infty(Z)}.
\end{equation*}
Two elements $\widehat{u}_{1}, \widehat{u}_{2} \in \widehat{N}^{1,\infty}( Z )$ are identified if $\| \widehat{u}_{1} - \widehat{u}_{2} \|_{ \widehat{N}^{1,\infty}(Z) } = 0$. With this convention, we obtain from $\widehat{N}^{1,\infty}( Z )$  a Banach space $N^{1,\infty}( Z; \mathbb{V} )$ with the norm
\begin{equation*}
    \| u \|_{ N^{1,\infty}(Z ) }
    \coloneqq
    \| \widehat{u} \|_{ \widehat{N}^{1,\infty}(Z) },
\end{equation*}
where $\widehat{u}$ is any $\widehat{N}^{1,\infty}( Z )$-representative of ${u} \in N^{1,\infty}(Z) $; see \cite[Section 7.1]{HKST2015}. We refer to $\mathscr{E}_{\infty}(u) \coloneqq \inf_{ \rho \in \mathcal{D}_{N,\infty}( \widehat{u} ) }\| \rho\|_{L^\infty(Z)}=\|\rho_u\|_{L^\infty(Z)}$ as the (Newtonian) energy seminorm.
\end{definition}
In this paper we work with the following variant of the $N^{1,\infty}$-space.
\begin{definition}\label{def:Dir}
We denote $u \in W^{1,\infty}( Z )$ if $u \in L^{\infty}( Z )$ and there exists $\widehat{u} \in \widehat{N}^{1,\infty}( Z )$ for which $\left\{ u \neq \widehat{u} \right\}$ is negligible. The space $W^{1,\infty}( Z )$ is equipped with the norm
\begin{equation*}
    \|u\|_{W^{1, \infty}(Z)}
    \coloneqq
    \| \widehat{u} \|_{ \widehat{N}^{1,\infty}(Z) }.
\end{equation*}
\end{definition}
Lemma 5.13 in \cite{Durand} implies that if $\widehat{u}_1, \widehat{u}_2 \in \widehat{N}^{1,\infty}(Z)$ agree almost everywhere, then
\begin{equation*}
    \inf_{ \rho \in \mathcal{D}_{N,\infty}( \widehat{u}_2 - \widehat{u}_1 ) }\| \rho\|_{L^\infty(Z)}
    =
    0
    \quad\text{and}\quad
    \inf_{ \rho \in \mathcal{D}_{N,\infty}( \widehat{u}_2 ) }\| \rho\|_{L^\infty(Z)}
    =
    \inf_{ \rho \in \mathcal{D}_{N,\infty}( \widehat{u}_1 ) }\| \rho\|_{L^\infty(Z)}
\end{equation*}
The following lemma gives some embeddings between the spaces introduced in this section.
\begin{lemma}\label{lemma:M:in:W}
Let $Z$ be a metric measure space. 
\begin{enumerate}
    \item The linear mapping $P\colon\LIP^{\infty}(Z)\to N^{1,\infty}(Z)$  defined by $u\mapsto u$ is injective and $1$-Lipschitz
    \item The linear mapping $\iota \colon M^{1,\infty}( Z ) \rightarrow W^{1,\infty}( Z )$ defined by $u \mapsto u$ is injective and $2$-Lipschitz. In particular, every $u \in M^{1,\infty}( Z )$ induces a unique equivalence class $\widehat{u} \in N^{1,\infty}( Z )$.
\end{enumerate}\end{lemma}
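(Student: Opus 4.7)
The plan is to handle the two parts separately. Both rely on two basic observations: (a) a bounded Lipschitz function admits the constant $\lip(u)$ as a $\infty$-weak upper gradient, and (b) a Lipschitz function that vanishes almost everywhere must vanish identically, since every ball has positive measure.

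For part (1), given $u \in \LIP^{\infty}(Z)$, I would set $\rho \equiv \lip(u)$ and check that $\rho \in \mathcal{D}_{N,\infty}(u)$: for every rectifiable curve $\gamma \colon [a,b] \to Z$ we have
\begin{equation*}
|u(\gamma(a)) - u(\gamma(b))| \leq \lip(u)\, d(\gamma(a),\gamma(b)) \leq \lip(u)\,\ell(\gamma) = \int_{\gamma} \rho\,ds,
\end{equation*}
so \eqref{eq:boundaryinequality} holds for \emph{every} rectifiable curve, not just $\infty$-almost every one. Since $\rho \in \mathcal L^{\infty}(Z)$, this gives $u \in \widehat{N}^{1,\infty}(Z)$ and hence $P(u)$ is well defined as an element of $N^{1,\infty}(Z)$. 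The norm bound is immediate: $\|P(u)\|_{N^{1,\infty}(Z)} \leq \|u\|_{L^{\infty}(Z)} + \lip(u) = \|u\|_{\LIP^{\infty}(Z)}$. For injectivity, if $P(u) = 0$, then in particular $\|u\|_{L^{\infty}(Z)} = 0$, so $u = 0$ almost everywhere; since $u$ is continuous and every open ball has positive measure, $u \equiv 0$ on $Z$.

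For part (2), given $u \in M^{1,\infty}(Z)$, I invoke \Cref{lemm:Lip:Haj} to extract a bounded Lipschitz representative $\widehat{u} \in \LIP^{\infty}(Z)$ satisfying $\lip(\widehat{u}) \leq 2\|g\|_{L^{\infty}(Z)}$ for every $g \in \mathcal{D}_{\infty}(u)$. By part (1) this $\widehat{u}$ lies in $\widehat{N}^{1,\infty}(Z)$ with $\mathscr{E}_{\infty}(\widehat{u}) \leq \lip(\widehat{u})$, so it serves as the $N^{1,\infty}$-representative required in \Cref{def:Dir}, and $\iota(u)$ is well defined in $W^{1,\infty}(Z)$. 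The uniqueness of the equivalence class follows from the consequence of \cite[Lemma 5.13]{Durand} cited right after \Cref{def:Dir}: any two elements of $\widehat{N}^{1,\infty}(Z)$ agreeing almost everywhere are identified in $N^{1,\infty}(Z)$. Chaining the estimates gives
\begin{equation*}
\|\iota(u)\|_{W^{1,\infty}(Z)} = \|\widehat{u}\|_{\widehat{N}^{1,\infty}(Z)} \leq \|\widehat{u}\|_{L^{\infty}(Z)} + \lip(\widehat{u}) \leq \|u\|_{L^{\infty}(Z)} + 2\inf_{g \in \mathcal{D}_{\infty}(u)}\|g\|_{L^{\infty}(Z)} \leq 2\|u\|_{M^{1,\infty}(Z)},
\end{equation*}
which is the claimed 2-Lipschitz bound. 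For injectivity: if $\iota(u) = 0$, then $\|\widehat{u}\|_{L^{\infty}(Z)} = 0$, so $\widehat{u} = 0$ almost everywhere, hence $u = 0$ in $L^{\infty}(Z)$ and thus in $M^{1,\infty}(Z)$.

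Neither part involves a substantive obstacle. The only care needed is bookkeeping of the quotient structures: checking that $P(u)$ and $\iota(u)$ are well defined requires picking a single representative and invoking the fact that the $\widehat{N}^{1,\infty}$-seminorm is blind to modifications on null sets, which is precisely the quoted corollary of \cite[Lemma 5.13]{Durand}. The factor $2$ in part (2) is sharp in the sense that it directly reflects the factor $2$ in the definition \eqref{eq:Hineq} of a Haj{\l}asz upper gradient, as it appears in \Cref{lemm:Lip:Haj}.
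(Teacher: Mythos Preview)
Your proposal is correct and follows essentially the same approach as the paper: using the constant $\lip(u)$ as an $\infty$-weak upper gradient for part (1), and then invoking \Cref{lemm:Lip:Haj} combined with part (1) for part (2). You supply more detail than the paper (explicit verification of the upper gradient inequality, the injectivity arguments, and the well-definedness via \cite[Lemma 5.13]{Durand}), but the underlying strategy is identical.
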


\begin{proof}
For $(1)$ it is enough to observe that $L$-Lipschitz functions admit $\rho=L$ as a $\infty$-weak upper gradient. For $(2)$, consider a function $u\in M^{1,\infty}(Z)$. By \Cref{lemm:Lip:Haj}, there exists a Lipschitz representative $\widehat u$ of $u$ with $\|\widehat u\|_{\text{LIP}^{\infty}(Z)}\leq 2 \|u\|_{M^{1,\infty}(Z)}$. Then $(1)$  yields that $\widehat u\in W^{1,\infty}(Z)$ with $\|u\|_{W^{1,\infty}(Z)}=\|\widehat u\|_{\widehat{N}^{1,\infty}(Z)}\leq |\widehat u\|_{\text{LIP}^{\infty}(Z)}\leq 2 \|u\|_{M^{1,\infty}(Z)}$
\end{proof}
From now on we mainly use the spaces $M^{1,\infty}$ and $W^{1,\infty}$ instead of $\LIP^{\infty}$ and $N^{1,\infty}$, respectively. The results can be translated back and forth since, by \Cref{lemm:Lip:Haj}, the spaces $M^{1,\infty}(Z)$ and $\LIP^{\infty}(Z)$ are $2$-isomorphic and the spaces $W^{1,\infty}(Z)$ and $N^{1,\infty}(Z)$ are isometrically isomorphic.

\section{Thick quasiconvexity, Poincaré inequality, and Lipschitz representatives}\label{sec:thick}
The main goal of this section is to give the proofs of \Cref{thm:equivalence,thm:bounded:doubling}. However, we also introduce thick quasiconvexity and prove basic properties. We also recall the definition of a weak Poincaré inequality.

\subsection{Thick quasiconvexity}

\begin{definition}\label{def:quasi_R}
A  metric measure space $Z$ is $(C,R)$-quasiconvex (or uniformly locally quasiconvex) if there exist $C\geq 1$ and $R\in (0,\infty]$ such that, for every pair of points $x,y\in Z$ with $d( x, y )<R$, there exists a rectifiable curve $\gamma$ connecting $x$ to $y$ so that $\ell(\gamma)\leq Cd(x,y) $. If $R=\infty$ we say that $Z$ is ($C$)-quasiconvex. 

\end{definition}

Quasiconvexity is a purely geometric notion. A closely related notion, where the measure plays a role, appeared in \cite[Definition 4.1]{Durand_2}; see also \cite{DeCe:Pal:88}. This is the notion of $\infty$-thick quasiconvexity. More recently, an equivalent definition of $\infty$-thick quasiconvexity was given in \cite{Creutz_Soultanis} for infinitesimally doubling spaces. We localize and adapt both definitions from \cite{Durand_2,Creutz_Soultanis} as follows.

\begin{definition}\label{def:quasi_Z}
Let $C \geq 1$ and $R \in (0,\infty]$. A metric measure space $Z$ is \emph{$\infty$-thick $(C,R)$-quasiconvex} if for every $x_0\in Z$ and all measurable sets $E, F \subset  B( x_0, R )$ with $\mu(E), \mu(F) > 0$, the family of curves 
$$\Gamma( E, F; C )=\{\gamma \colon \left[0,1\right] \rightarrow Z \colon \gamma(0) = x \in E,\; \gamma(1) = y \in F\;\;\text{and}\;\;\ell( \gamma ) \leq C d( x, y )\}$$ is not $\infty$-negligible. If $R=\infty$, we say that $Z$ is $\infty$-thick ($C$)-quasiconvex.
\end{definition}
We also need the following stronger version.
\begin{definition}\label{def:verythick}
Let $C\geq 1$ and $R\in(0, \infty]$. A metric measure space $Z$ is very $\infty$-thick $(C, R)$-quasiconvex if for every $x_0\in Z$ and every $x, y\in B(x_0, R)$, the family of curves 
\[\Gamma(x, y; C)=\left\{\gamma\colon[0, 1]\to Z\colon\gamma(0)=x, \gamma(1)=y\ {\rm and}\ \ell(\gamma)\leq Cd(x, y))\right\}\]
is not $\infty$-negligible. If $R=\infty$, we say that $Z$ is very $\infty$-thick ($C$)-quasiconvex.
\end{definition}
The $\infty$-thick quasiconvexity can be strengthened to very $\infty$-thick quasiconvexity under mild assumptions by the use of the following lemma. The following lemma is a local version of \cite[Theorem 3.1]{D-CJS2016}, where $Z$ was assumed to be  complete and doubling. Since the proof is essentially the same, we omit it.

\begin{lemma}\label{thm:very_thick-iff-infty-thick}
Let $Z$ be a locally complete metric measure space which supports an infinitesimally doubling Borel measure $\mu$ which is nontrivial and finite on every ball. Then the following are quantitatively equivalent:
\begin{enumerate}
    \item  $Z$ is $\infty$-thick $(C, R)$-quasiconvex;
    \item  $Z$ is very $\infty$-thick $(C', R)$-quasiconvex;
    \item  There exist $C''\geq 1$ and $R\in(0, \infty]$ such that for every null set $N\subset Z$, and for every $x_0\in Z$ and every $x, y\in B(x_0, R)$, there exists a $C''$-quasiconvex curve $\gamma$ in $Z$ connecting $x$ to $y$ with $\gamma\notin\Gamma_N^+$.
\end{enumerate}
In fact, $(2) \Rightarrow (1)$ holds with the same constants $C = C'$ and $R$, $(1) \Rightarrow (3)$ holds with constants $C'' = 6C$ and $R$, and $(3) \Rightarrow (2)$ holds with $C' = C''$ and the same $R$.
\end{lemma}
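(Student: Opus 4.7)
The implications $(2)\Rightarrow(1)$ and $(3)\Rightarrow(2)$ are direct. For $(2)\Rightarrow(1)$, the inclusion $\Gamma(x,y;C')\subseteq\Gamma(E,F;C')$ for any $x\in E$, $y\in F$ transfers non-$\infty$-negligibility from the singleton-endpoint family to the set-to-set family, so $(1)$ holds with $C=C'$ and the same $R$. For $(3)\Rightarrow(2)$, recall from the remark following \Cref{de:negligible} that a curve family is $\infty$-negligible precisely when it is contained in $\Gamma_N^+$ for some Borel null set $N$. Were $\Gamma(x,y;C'')$ $\infty$-negligible, the associated $N$ would produce, via $(3)$ applied to that $N$, a $C''$-quasiconvex curve $\gamma$ from $x$ to $y$ with $\gamma\notin\Gamma_N^+$, contradicting $\Gamma(x,y;C'')\subseteq\Gamma_N^+$. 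Hence $(2)$ holds with $C'=C''$ and the same $R$.

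The substantive implication is $(1)\Rightarrow(3)$. Fix $x_0\in Z$, $x,y\in B(x_0,R)$ with $x\ne y$ (the other case is handled by the constant curve, which is not in $\Gamma_N^+$), and a Borel null $N\subseteq Z$. My plan is to build a continuous curve $\gamma\colon[0,1]\to Z$ with $\gamma(0)=x$, $\gamma(1)=y$, $\ell(\gamma)\le 6C\,d(x,y)$ and $\gamma\notin\Gamma_N^+$ by an infinite concatenation of $(1)$-produced curves. Choose $s_0>0$ small enough that $B(x,s_0)\cup B(y,s_0)\subseteq B(x_0,R)$ and small compared to $d(x,y)$; set $s_n=2^{-n}s_0$. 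A first application of $(1)$ to $(B(x,s_0),B(y,s_0))$ yields a $C$-quasiconvex ``backbone'' $\gamma_{\mathrm{mid}}$ from some $x^{(0)}\in B(x,s_0)$ to some $y^{(0)}\in B(y,s_0)$ of length at most $C(d(x,y)+2s_0)$, not in $\Gamma_N^+$. Iteratively, for $n\ge 1$, apply $(1)$ on the $x$-side to the positive-measure pairs $B(x,s_n)$ and $B(x^{(n-1)},s_n)$ (both contained in $B(x_0,R)$ by construction), producing curves of length $\lesssim Cs_{n-1}$ joining successive approximants $x^{(n)}$ of $x$ and each avoiding $N$; a further recursive halving of scales handles the matching of the ``loose'' endpoints to $x^{(n-1)}$ exactly, contributing a convergent sub-series of short bridging curves. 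The $y$-side is treated symmetrically. Reparametrizing the resulting infinite chain so that $x$ is approached as $t\to 0^+$ through the nested balls $B(x,s_n)$ and $y$ as $t\to 1^-$, I obtain a continuous $\gamma$ on $[0,1]$; continuity at the endpoints follows from the geometric decay $s_n\to 0$. Avoidance of $N$ is preserved because each piece satisfies $\int\chi_N\,ds=0$ and this vanishing is additive under countable concatenations of finite-length pieces. A geometric summation of all the contributions bounds $\ell(\gamma)\le 6C\,d(x,y)$, with the factor $6$ emerging from the backbone plus the two symmetric side-chains plus their bridging corrections.

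The main obstacle is precisely this endpoint matching in the recursion: $(1)$ produces only loose endpoints in small balls rather than at the prescribed targets $x^{(n-1)}$, so a finite concatenation cannot terminate exactly at $x$ and $y$. My remedy is the doubly-recursive chain whose bridge contributions sit at ever-smaller scales $s_n=2^{-n}s_0$, so that their total length remains finite by geometric summability. Local completeness of $Z$ ensures that the Cauchy sequences of endpoints $x^{(n)}\in B(x,s_n)$ and their $y$-analogues have limits in $Z$, which are forced to be $x$ and $y$ by the shrinking radii, yielding the desired continuous curve from $x$ to $y$.
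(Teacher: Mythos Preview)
The paper omits the proof, deferring to \cite[Theorem~3.1]{D-CJS2016}. Your treatment of $(2)\Rightarrow(1)$ and $(3)\Rightarrow(2)$ is correct.

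For $(1)\Rightarrow(3)$, iterating $(1)$ at shrinking scales is the right strategy, but the step ``a further recursive halving of scales handles the matching of the loose endpoints'' hides a real problem. Your curve $\gamma_n^x$ ends at some $\tilde x^{(n-1)}\in B(x^{(n-1)},s_n)$ rather than at $x^{(n-1)}$, so between every pair of consecutive main-chain pieces there sits a gap of size $<s_n$. Bridging each such gap is again a connect-two-prescribed-points problem of the same type; solving it by the same scheme produces its own main chain and its own countable family of sub-gaps of sizes $<s_{n+1},s_{n+2},\dots$. With your decay $s_n=2^{-n}s_0$, an honest accounting shows that the total gap size at every recursion depth stays of order $s_0$, so the main-chain length added at each depth is of order $Cs_0$; summing over the infinitely many depths gives infinite length. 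Equivalently, writing $L(r)$ for the length your procedure assigns to a gap of size $r$, the recursion $L(r)\le 4Cr+\sum_{n\ge1}L(2^{-n}r)$ under the ansatz $L(r)=Kr$ yields only the vacuous $K\le 4C+K$. The claimed ``convergent sub-series'' and the bound $6C\,d(x,y)$ therefore do not follow from what you have written. A ratio strictly below $\tfrac12$ would make the recursion close (with constant $C(1+2\alpha)/(1-2\alpha)$ per side), after which the constants must be recomputed; alternatively the chain can be organised so that endpoint matching does not compound, which is presumably how the cited reference lands on the clean factor~$6$.
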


The lemma leads to the following immediate self-improvement.
\begin{corollary}\label{cor:thick-to-very-thick}
Let $Z$ be a locally complete metric measure space which supports an infinitesimally doubling Borel measure $\mu$ which is nontrivial and finite on every ball. If $Z$ is $\infty$-thick $(C,R)$-quasiconvex, then $Z$ is very $\infty$-thick $(C',R)$-quasiconvex for every $C' > C$.  
\end{corollary}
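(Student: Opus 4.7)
The plan is to fix $x_{0}\in Z$, $x,y\in B(x_{0},R)$, an $\infty$-negligible curve family $\Gamma_{0}$, and $C'>C$, and produce a curve in $\Gamma(x,y;C')\setminus\Gamma_{0}$. By the characterization noted just after Definition~\ref{de:negligible}, there is a null Borel set $N\subset Z$ with $\Gamma_{0}\subset\Gamma_{N}^{+}$, so it suffices to construct a $C'$-quasiconvex curve from $x$ to $y$ whose integral against $\chi_{N}$ vanishes.

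Fix $\epsilon\in(0,1/2)$ with $C(1+14\epsilon)\le C'$ and set
$E=B(x,\epsilon d(x,y))\cap B(x_{0},R)$ and $F=B(y,\epsilon d(x,y))\cap B(x_{0},R)$.
Both $E$ and $F$ are open neighborhoods of $x$ and $y$ respectively, hence have positive measure, and both lie in $B(x_{0},R)$. Applying $\infty$-thick $(C,R)$-quasiconvexity to the pair $(E,F)$ shows that $\Gamma(E,F;C)$ is not contained in $\Gamma_{N}^{+}$, so we obtain a $C$-quasiconvex curve $\gamma$ from some $x'\in E$ to some $y'\in F$ with $\int_{\gamma}\chi_{N}\,ds=0$; the triangle inequality gives $\ell(\gamma)\le C(1+2\epsilon)d(x,y)$. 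Since $d(x,x'),d(y,y')<\epsilon d(x,y)<2\epsilon R<R$, Lemma~\ref{thm:very_thick-iff-infty-thick}(3) applied to $N$ produces $6C$-quasiconvex curves $\alpha$ from $x$ to $x'$ and $\beta$ from $y'$ to $y$, each avoiding $\Gamma_{N}^{+}$ and of length at most $6C\epsilon d(x,y)$. Their concatenation $\alpha\cdot\gamma\cdot\beta$ joins $x$ to $y$, has length at most $C(1+14\epsilon)d(x,y)\le C'd(x,y)$, and has zero $\chi_{N}$-integral by additivity of curve integrals under concatenation; this produces the desired curve.

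The main technical point is arranging that the perturbation balls around $x$ and $y$ both lie inside a common ball of radius $R$. I would handle this by intersecting the balls $B(x,\epsilon d(x,y))$ and $B(y,\epsilon d(x,y))$ with $B(x_{0},R)$ itself: openness together with the assumption that $\mu$ is positive on balls ensures that the resulting sets $E,F$ still have positive measure, which is all that the hypothesis requires. Apart from this observation, the rest of the argument is a perturbation-and-concatenation computation, and the linear bound $C(1+14\epsilon)$ is what allows the constant to be pushed arbitrarily close to $C$.
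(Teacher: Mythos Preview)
Your proof is correct and follows essentially the same approach as the paper: reduce $\Gamma_{0}$ to $\Gamma_{N}^{+}$ for a null Borel set $N$, use the $\infty$-thick $(C,R)$-quasiconvexity on small perturbation neighborhoods of $x$ and $y$ inside $B(x_{0},R)$ to obtain a $C$-quasiconvex middle segment avoiding $\Gamma_{N}^{+}$, then cap the ends using the $6C$-quasiconvex curves supplied by Lemma~\ref{thm:very_thick-iff-infty-thick}(3). The only cosmetic differences are that the paper scales the perturbation radius as an absolute $\epsilon<(C'-C)d(x,y)/(14C)$ rather than your $\epsilon\, d(x,y)$, and that the paper uses the common center $x_{0}$ (so that $x,x'\in B(x_{0},R)$ directly) rather than recentering at $x$ via the bound $\epsilon<1/2$; both choices lead to the same $C(1+14\epsilon)$ length estimate.
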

\begin{proof}
Let $C' > C$. Consider a distinct pair $x, y \in B( x_0, R )$ and $\epsilon > 0$ such that $\epsilon < ( C'-C)d(x,y)/(14C)$.

Let $\Gamma_0$ be an $\infty$-negligible curve family. Recall that it holds that $\Gamma_0 \subset \Gamma^{+}_{B}$ for some negligible Borel set $B$. By the $\infty$-thick $(C,R)$-quasiconvexity, there exists a $C$-quasiconvex curve $\sigma \not\in \Gamma^{+}_{B}$ joining a point $x'\in B( x, \epsilon ) \cap B( x_0, R )$ to $y'\in B( y, \epsilon ) \cap B( x_0, R )$. By the very $\infty$-thick $( 6C, R )$-quasiconvexity, there exist $6C$-quasiconvex curves $\sigma_x, \sigma_y \not\in \Gamma^{+}_{B}$ joining $x$ to $x'$ and $y'$ to $x$, respectively. The concatenation $\gamma = \sigma_x \star \sigma \star \sigma_y$ satisfies $\ell( \gamma ) \leq C' d(x,y)$ and $\gamma \not\in \Gamma^{+}_{B}$. The claim follows.
\end{proof}
The punctured disc in the plane illustrates that \Cref{cor:thick-to-very-thick} is sharp. The point is that the punctured disc $\{(x,y)\in\R^2:\, 0<x^2+y^2<1\}$ is $\infty$-thick $1$-quasiconvex but it is not very $\infty$-thick $1$-quasiconvex.

\begin{remark}\label{rem:verythick-to-thick}
Very $\infty$-thick $( C, R )$-quasiconvexity is stronger than $( C, R )$-quasiconvexity. On the other hand, quasiconvexity does not necessarily imply $\infty$-thick $( C, R )$-quasiconvexity for any $C$ and $R$, even for complete doubling spaces. The Sierpi\'nski carpet $S$ is such an example as discussed in the introduction. 
\end{remark}

The very $\infty$-thick quasiconvexity is inherited by uniformly locally quasiconvex open subsets in the following sense.
\begin{lemma}\label{lem:thick-to-verythick}
Suppose that $Z$ is very $\infty$-thick $(C,R)$-quasiconvex, and $\Omega \subset Z$ is open and $( \widetilde{C}, R' )$-quasiconvex. Then $\Omega$ is very $\infty$-thick $( \widetilde{C}C, R' )$-quasiconvex.
\end{lemma}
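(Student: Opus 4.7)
The plan is to replace any $(\widetilde{C}, R')$-quasiconvex curve in $\Omega$ by a finer curve of controlled length that avoids a prescribed $\infty$-negligible family; the replacement is done piecewise using the very $\infty$-thick quasiconvexity of $Z$ on scales below the distance of the curve to the complement of $\Omega$. First I would fix $z_0 \in \Omega$, distinct points $x, y \in B_\Omega(z_0, R')$, and an $\infty$-negligible family $\Gamma_0$ of curves in $\Omega$. By the characterization recorded after \Cref{de:negligible}, there is a null Borel set $N \subset \Omega$ with $\Gamma_0 \subset \Gamma^+_N$; since $\Omega$ is Borel in $Z$ and $\mu_\Omega = \mu_Z|_\Omega$, the set $N$ is also a null Borel set in $Z$, so $\Gamma^+_N$ is $\infty$-negligible in $Z$ as well.

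Next, the $(\widetilde{C}, R')$-quasiconvexity of $\Omega$ provides a rectifiable curve $\sigma$ in $\Omega$ from $x$ to $y$ with $\ell(\sigma) \leq \widetilde{C}\, d(x,y)$, which I would reparametrize by arclength. The key geometric observation is that $|\sigma|$ is compact in $Z$, so $d_0 \coloneqq \mathrm{dist}(|\sigma|, Z \setminus \Omega) > 0$, with the convention $d_0 = \infty$ when $\Omega = Z$. I would then choose $\delta \in (0, \min\{d_0/C, R\})$ and partition the arclength interval into subintervals of length at most $\delta$, producing consecutive points $x = p_0, p_1, \dots, p_n = y$ on $\sigma$ satisfying $d(p_{i-1}, p_i) \leq \delta$. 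For each $i$, the very $\infty$-thick $(C, R)$-quasiconvexity of $Z$, applied to the $\infty$-negligible family $\Gamma^+_N$, supplies a curve $\gamma_i$ in $Z$ from $p_{i-1}$ to $p_i$ with $\ell(\gamma_i) \leq C d(p_{i-1}, p_i) < d_0$ and $\gamma_i \notin \Gamma^+_N$; the length bound forces $|\gamma_i| \subset B_Z(p_{i-1}, d_0) \subset \Omega$.

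The concatenation $\gamma = \gamma_1 \star \cdots \star \gamma_n$ is then a rectifiable curve in $\Omega$ from $x$ to $y$ with
\[
\ell(\gamma) = \sum_{i=1}^n \ell(\gamma_i) \leq C \sum_{i=1}^n d(p_{i-1}, p_i) \leq C\, \ell(\sigma) \leq C\widetilde{C}\, d(x,y),
\]
and the additivity of curve integrals under concatenation yields $\int_\gamma \chi_N \, ds = \sum_i \int_{\gamma_i} \chi_N \, ds = 0$, so $\gamma \notin \Gamma^+_N$ and hence $\gamma \notin \Gamma_0$. Since $\Gamma_0$ was arbitrary, the family $\Gamma(x, y; C\widetilde{C})$ in $\Omega$ fails to be $\infty$-negligible, proving the claim.

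The main obstacle I anticipate is ensuring that the short pieces $\gamma_i$, produced by the very $\infty$-thick quasiconvexity of the ambient space $Z$, actually remain inside $\Omega$. This is precisely what the length bound $\ell(\gamma_i) < d_0$ accomplishes, since it confines each $|\gamma_i|$ to a ball centered on $|\sigma|$ of radius less than $d_0$. Once this containment is in hand, the remainder is bookkeeping on lengths and curve integrals of the concatenation.
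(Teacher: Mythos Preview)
Your proof is correct and follows essentially the same approach as the paper: pick a $\widetilde{C}$-quasiconvex curve $\sigma$ in $\Omega$, partition it into pieces shorter than $\min\{d_0/C, R\}$ where $d_0 = \mathrm{dist}(|\sigma|, Z\setminus\Omega)$, replace each piece using the very $\infty$-thick $(C,R)$-quasiconvexity of $Z$ by a $C$-quasiconvex curve avoiding $\Gamma^+_N$, and observe that the length bound forces each replacement to stay in $\Omega$. Your version is in fact slightly cleaner in its handling of the scale $\delta$ and in making explicit the passage from the $\infty$-negligible family in $\Omega$ to one in $Z$.
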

\begin{proof}
Let $x_0 \in \Omega$ and consider a distinct pair $x, y \in \Omega \cap B( x_0, R' )$. Let $\Gamma$ be an $\infty$-negligible curve family. There exists a $\mu$-negligible Borel set $B \subset Z$ such that $\Gamma \subset \Gamma^{+}_{B}$.

There exists a $\widetilde{C}$-quasiconvex curve $\sigma \colon [0,1] \rightarrow \Omega$ joining $x$ to $y$. Let $\delta = C \inf d( \sigma(t), z ) > 0$ where the infimum is taken over $t \in [0,1]$ and $z \in Z \setminus \Omega$. Consider a partition $\{ t_i \}_{ i = 0 }^{ n+1 }$ of $[0,1]$ such that $\ell( \sigma|_{ [t_i, t_{i+1}] } ) < \min\{ \delta, R \}$ for $0 \leq i \leq n$. There exists $\gamma_i \colon [0,1] \rightarrow Z \not\in \Gamma^{+}_{B}$ that is $C$-quasiconvex and joins $\sigma( t_i )$ to $\sigma( t_{i+1} )$ for $0 \leq i \leq n$. The definition of $\delta$ yields that $\gamma_i$ takes values in $\Omega$ for $0 \leq i \leq n$. The concatenation $\gamma = \gamma_0 \star \dots \star \gamma_{n} \colon [0,1] \rightarrow \Omega \not\in \Gamma^{+}_{B}$ satisfies
\begin{align*}
    \ell( \gamma )
    =
    \sum_{ i = 1 }^{ n } \ell( \gamma_i )
    \leq
    \sum_{ i = 1 }^{ n } C d( \sigma( t_i ), \sigma( t_{i+1} ) )
    \leq
    C \ell( \sigma )
    \leq
    C \widetilde{C} d( x, y ).
\end{align*}
Since $\Gamma$ was an arbitrary $\infty$-negligible curve family, the claim follows.
\end{proof}

Using the self-improvement in \Cref{cor:thick-to-very-thick}, the completion also inherits very $\infty$-thick quasiconvexity.
\begin{lemma}\label{lemm:equivalence:completion}
Let $Z$ be a locally complete metric measure space with a reference measure that is infinitesimally doubling, and finite and positive on all balls. Let $\overline{Z}$ be a metric completion of $Z$. If $Z$ is very $\infty$-thick $(C,R)$-quasiconvex, then $\overline{Z}$ is very $\infty$-thick $(C',R)$-quasiconvex for every $C' > C$.
\end{lemma}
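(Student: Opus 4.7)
The plan is to reduce to very $\infty$-thick quasiconvexity of $Z$ itself by approximating the endpoints in $\overline{Z}$ by points of $Z$ and connecting them to their approximations via telescoping concatenations. I would first extend $\mu_{Z}$ to a Borel measure $\mu_{\overline{Z}}$ on $\overline{Z}$ as the pushforward along the inclusion $Z \hookrightarrow \overline{Z}$, so that $\mu_{\overline{Z}}(A) = \mu_{Z}(A \cap Z)$ for every Borel $A \subset \overline{Z}$; this is positive and finite on balls of $\overline{Z}$ since $Z$ is dense. Any $\infty$-negligible family $\Gamma_0$ in $\overline{Z}$ is contained in $\Gamma^{+}_{B}$ for some Borel $B \subset \overline{Z}$ with $\mu_{\overline{Z}}(B) = 0$; equivalently $B \cap Z$ is $\mu_{Z}$-negligible, so $\Gamma^{+}_{B \cap Z}$ is $\infty$-negligible in $Z$, which is what licenses applying the hypothesis.

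Fix $C' > C$, a point $x_0 \in \overline{Z}$, and distinct $x, y \in B_{\overline{Z}}(x_0, R)$. I would pick $\epsilon > 0$ satisfying $8 C \epsilon \leq (C' - C) d(x,y)$ and $\epsilon < \tfrac{1}{4} \min\bigl\{ R - d(x, x_0),\, R - d(y, x_0),\, R \bigr\}$. Density of $Z$ in $\overline{Z}$ then furnishes a point $x_0' \in Z$ with $d(x_0', x_0) < \epsilon$ and sequences $\{ x^{(n)} \}_{n \geq 0}, \{ y^{(n)} \}_{n \geq 0} \subset Z$ satisfying $d(x^{(n)}, x), d(y^{(n)}, y) < \epsilon \cdot 2^{-n}$. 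The choice of $\epsilon$ ensures $x^{(0)}, y^{(0)} \in B_Z(x_0', R)$ and $x^{(n+1)} \in B_Z(x^{(n)}, R)$, $y^{(n+1)} \in B_Z(y^{(n)}, R)$ for every $n$. Applying the very $\infty$-thick $(C,R)$-quasiconvexity of $Z$, I would extract curves in $Z$ avoiding $\Gamma^{+}_{B \cap Z}$: a curve $\tau$ from $x^{(0)}$ to $y^{(0)}$ of length at most $C \bigl( d(x,y) + 2\epsilon \bigr)$, and for each $n$ curves $\sigma_n^{x}, \sigma_n^{y}$ from $x^{(n)}$ to $x^{(n+1)}$ and from $y^{(n)}$ to $y^{(n+1)}$ of length at most $\tfrac{3}{2} C \epsilon \cdot 2^{-n}$.

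Next, I would form the telescoping concatenation $\alpha^{x} \coloneqq \sigma_0^{x} \star \sigma_1^{x} \star \cdots$. Its total length is at most $3 C \epsilon$ and its parameter-tails have diameter tending to zero, so completeness of $\overline{Z}$ lets $\alpha^{x}$ extend continuously to a rectifiable curve terminating at $x$. Defining $\alpha^{y}$ analogously and letting $\gamma$ be the concatenation of the reversal of $\alpha^{x}$, then $\tau$, then $\alpha^{y}$, we obtain a curve in $\overline{Z}$ from $x$ to $y$ with
\begin{equation*}
    \ell(\gamma) \leq 3 C \epsilon + C \bigl( d(x,y) + 2\epsilon \bigr) + 3 C \epsilon = 8 C \epsilon + C d(x,y) \leq C' d(x,y).
\end{equation*}
Since each constituent piece of $\gamma$ lies in $Z$ and has zero integral against $\chi_{B \cap Z}$, the integral $\int_{\gamma} \chi_{B} \,ds$ is a countable sum of zeros, so $\gamma$ avoids $\Gamma^{+}_{B} \supset \Gamma_0$.

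The main technical point will be the infinite-telescoping step producing $\alpha^{x}$ (and $\alpha^{y}$): it must produce a continuous rectifiable curve terminating at a point of $\overline{Z}$ that generically lies outside $Z$, while preserving $\Gamma^{+}_{B}$-avoidance. Continuity at the terminal parameter is the standard Cauchy argument in the complete space $\overline{Z}$ using the geometric decay of the tail lengths; rectifiability follows from the summability $\sum_n \ell(\sigma_n^{x}) \leq 3 C \epsilon$; and the $\infty$-negligibility avoidance is preserved because the $\chi_{B}$-curve-integral over a countable concatenation splits as a countable sum of zero contributions from the constituent pieces.
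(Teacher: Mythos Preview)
Your argument is correct, but it takes a different route from the paper. The paper's proof is a two-liner: it first observes that very $\infty$-thick $(C,R)$-quasiconvexity of $Z$ immediately yields (non-``very'') $\infty$-thick $(C,R)$-quasiconvexity of $\overline{Z}$, since any positive-measure sets $E,F \subset B_{\overline{Z}}(x_0,R)$ have $E \cap Z, F \cap Z$ of positive $\mu_Z$-measure and one can use the curves supplied by $Z$; then it invokes \Cref{cor:thick-to-very-thick} (applied to $\overline{Z}$) to upgrade this to very $\infty$-thick $(C',R)$-quasiconvexity for every $C' > C$. Your approach instead constructs the required curves to arbitrary endpoints $x,y \in \overline{Z}$ directly, via telescoping concatenations of short $Z$-curves converging to $x$ and $y$, exploiting completeness of $\overline{Z}$ to close up the endpoints. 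The paper's route is shorter because it leverages the already-proved self-improvement \Cref{cor:thick-to-very-thick} (which in turn rests on \Cref{thm:very_thick-iff-infty-thick}); your route is more self-contained and avoids that dependence, at the cost of writing out the telescoping-and-completion step explicitly. One minor point worth making explicit in your write-up: the image of $\gamma$ meets $\overline{Z}\setminus Z$ only at the two endpoints $x,y$, so even if $B$ contains $\overline{Z}\setminus Z$ the contribution $\int_\gamma \chi_{B\setminus Z}\,ds$ vanishes, completing the check that $\gamma \notin \Gamma^{+}_{B}$.
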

\begin{proof}
Suppose that $Z$ is very $\infty$-thick $(C,R)$-quasiconvex. It readily follows that the metric completion $\overline{Z}$ is $\infty$-thick $(C,R)$-quasiconvex. By \Cref{cor:thick-to-very-thick}, $\overline{Z}$ is very $\infty$-thick $(C',R)$-quasiconvex for every $C' > C$.
\end{proof}

\subsection{Poincaré inequalities}
For a locally integrable function $u\in\mathcal L^{1}_{loc}(Z)$ and a ball $B = B( x_0, r )$, the integral average is denoted by
 $$\aint{B}u \, d\mu=\dfrac{1}{\mu(B)}\int_{B} u\, d\mu. $$
We also denote $\lambda B \coloneqq B( x_0, \lambda r )$ for $\lambda > 0$.
 
\begin{definition}\label{def:PI-ineq}
A metric measure space $Z$ supports a weak $(1,p)$-Poincaré inequality, for some $1\leq p<\infty$, if there exist constants $C>0,\lambda\geq 1$ such that
\begin{equation}\label{eq:Poin_ineq}
    \aint{B} |u-u_B|\,d\mu\leq C\diam (B)\left(\aint{\lambda B} \rho^p\,d\mu  \right)^{1/p}
\end{equation}
for every ball $B\subset Z$, every function $u\colon Z\to \R$ integrable in bounded sets, and every $p$-weak upper gradient $\rho$ of $u$. In case $p=\infty$, we instead require
\begin{equation}\label{eq:Poin_ineq_infinity}
    \aint{B} |u-u_B|\,d\mu\leq C\diam (B)\| \rho\|_{L^{\infty}(\lambda B)},
\end{equation}
where $\rho$ is any $\infty$-weak upper gradient of $u$. If \eqref{eq:Poin_ineq} (resp. \eqref{eq:Poin_ineq_infinity}) hold for all balls $B$ whose radius is at most $r$, we say that $Z$ supports a weak $(1,p)$-Poincaré (resp. $(1,\infty)$-Poincaré) inequality up to scale $r$ with constants $(C,\lambda)$.
\end{definition}

Observe that a metric measure space $Z$ that supports a Poincaré inequality is necessarily connected. However, disconnected spaces can have Poincaré inequalities up to a scale. Consider, for instance, the union of two separated balls in the Euclidean space.

It is easy to see using Hölder's inequality that if $Z$ supports a weak $(1,q)$-Poincaré inequality for some $1\leq q<\infty$ up to scale $r$ with constants $(C,\lambda)$, then it supports a weak $(1,p)$-Poincaré inequality for any $p\in [q,\infty]$ up to scale $r$ with constants $(C,\lambda)$. A seminal result by Keith and Zhong \cite{KZ2008} proves that when the measure of $Z$ is doubling and $Z$ is complete, then the weak Poincaré inequality is an open ended condition. More recently, their result was localized by Björn and Björn \cite{Bj:Bj:18}. For the case $p=\infty$, there are examples of complete metric measure spaces with a doubling measure which have the weak $(1,\infty)$-Poincaré inequality but still do not satisfy any weak $(1,p)$-Poincaré inequality for any $1\leq p<\infty$ (see \cite[Example 2]{DSW}).

Several authors have explored the relations between Poincaré inequalities and quasiconvexity. For instance, it is known that any locally complete metric measure space $Z$ that has a doubling measure and supports a weak $(1,p)$-Poincaré inequality for some $p\in [1,\infty]$ is necessarily $\infty$-thick quasiconvex (see e.g. \cite[Proposition 4.3]{Durand_2}), therefore quasiconvex too. We include a uniform local version of this result in \Cref{prop:quasiconvexity_2}. Furthermore, from $\infty$-thick $(C,R)$-quasiconvexity one obtains a weak $(1,\infty)$-Poincaré inequality up to some scale $R'>0$ for metric measure spaces that are locally complete and infinitesimally doubling. This follows from \Cref{thm:equivalence}.

 \subsection{Proofs of \Cref{thm:equivalence,thm:bounded:doubling}}\label{sec:p=infty}

We divide the proof of \Cref{thm:equivalence} into several propositions; \Cref{prop:thick:to:Hajlasz,prop:Hajlasz:to:thick,prop:quasiconvexity_2}. The equivalence between the equality $\LIP^{\infty}(Z)=W^{1,\infty}(Z)$ and the $\infty$-thick $(C,R)$-quasiconvexity of $Z$ will be a consequence of \Cref{prop:thick:to:Hajlasz,prop:Hajlasz:to:thick}.

\begin{proposition}\label{prop:thick:to:Hajlasz}
Let $Z$ be a metric measure space that is infinitesimally doubling. If $Z$ is $\infty$-thick $(C,R)$-quasiconvex and $\widehat{u} \colon Z \to \mathbb{R}$ is a measurable function with an $\infty$-weak upper gradient $\rho$ and $C' > C$, then $z \mapsto C\| \rho \|_{ L^{\infty}( B( z, C'R) ) }$ is a Haj{\l}asz gradient of $\widehat{u}$ up to scale $R$. In particular, $C\|\rho\|_{ L^{\infty}( Z ) }$ is a Haj{\l}asz gradient of $u$ up to scale $R$. Moreover, $M^{1,\infty}(Z)=W^{1,\infty}(Z)$ with quantitatively equivalent norms.
\end{proposition}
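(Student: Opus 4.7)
The plan is to couple the $\infty$-thick $(C,R)$-quasiconvexity with the curve-integral characterisation of $\rho$, and to pass from an inequality along a single good curve to a pointwise Haj{\l}asz-type inequality via approximate continuity of $\widehat{u}$. Two preparatory facts I would record first. (i) The function $g(z) := C\|\rho\|_{L^{\infty}(B(z,C'R))}$ is lower semicontinuous: if $z_n \to z$, then for every $\varepsilon>0$ one has $B(z,C'R-\varepsilon) \subset B(z_n,C'R)$ eventually, so $\liminf_n g(z_n) \geq C\|\rho\|_{L^{\infty}(B(z,C'R-\varepsilon))}$; sending $\varepsilon \to 0$ gives lower semicontinuity, hence Borel measurability. (ii) Applying Lebesgue differentiation (valid because $\mu$ is infinitesimally doubling) to the truncations $\widehat u_k := \max(-k,\min(k,\widehat u)) \in L^\infty_{\mathrm{loc}}(Z)$ and taking the union of their exceptional nullsets yields a single nullset $N \subset Z$ off of which $\widehat u$ is approximately continuous, meaning that for every $\eta>0$ the set $\{w:|\widehat u(w)-\widehat u(x)|>\eta\}$ has density $0$ at each $x \notin N$.

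Fix $x,y \in Z\setminus N$ with $d(x,y)<R$, and choose $\varepsilon,\delta>0$ so small that $\varepsilon+d(x,y)<R$ and $Cd(x,y)+(2C+1)\varepsilon<C'R$; both are possible because $d(x,y)<R$ and $C'>C$. Approximate continuity ensures that the Borel sets $E := B(x,\varepsilon)\cap\{|\widehat u-\widehat u(x)|<\delta\}$ and $F := B(y,\varepsilon)\cap\{|\widehat u-\widehat u(y)|<\delta\}$ have positive measure, and both lie in $B(x,R)$ by the first constraint on $\varepsilon$. The $\infty$-thick $(C,R)$-quasiconvexity applied with $x_0=x$ therefore makes $\Gamma(E,F;C)$ non-$\infty$-negligible. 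From this family I remove two $\infty$-negligible subfamilies: the exceptional family on which \eqref{eq:boundaryinequality} fails, and $\Gamma^{+}_{A_x}$, where $A_x := \{w\in B(x,C'R) : \rho(w)>g(x)/C\}$ is the Borel nullset witnessing the essential supremum on $B(x,C'R)$. Any surviving curve $\gamma\colon[0,1]\to Z$ starts at some $x'\in E$, ends at some $y'\in F$, and satisfies $\ell(\gamma)\leq Cd(x',y')\leq C(d(x,y)+2\varepsilon)$; its image lies inside $B(x,C'R)$ by the second constraint on $\varepsilon$, and $\int_{\gamma}\chi_{A_x}\,ds=0$. Consequently
$$
|\widehat u(x')-\widehat u(y')|\leq\int_{\gamma}\rho\,ds \leq \frac{g(x)}{C}\ell(\gamma) \leq g(x)\bigl(d(x,y)+2\varepsilon\bigr),
$$
and the triangle inequality against the two endpoints gives $|\widehat u(x)-\widehat u(y)|\leq 2\delta+g(x)(d(x,y)+2\varepsilon)$. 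Sending $\varepsilon,\delta\to 0$ yields the Haj{\l}asz inequality $|\widehat u(x)-\widehat u(y)|\leq g(x)d(x,y)\leq (g(x)+g(y))d(x,y)$, so $g \in \mathcal{D}^R(\widehat u)$.

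The \emph{in particular} claim is immediate from the pointwise bound $g(z)\leq C\|\rho\|_{L^{\infty}(Z)}$. For the norm equivalence $M^{1,\infty}(Z)=W^{1,\infty}(Z)$, \Cref{lemma:M:in:W} already supplies $M^{1,\infty}(Z)\hookrightarrow W^{1,\infty}(Z)$ with operator norm at most $2$; conversely, given $u\in W^{1,\infty}(Z)$ with representative $\widehat u$ and minimal $\infty$-weak upper gradient $\rho\in L^\infty(Z)$, the previous paragraph places $C\|\rho\|_{L^\infty(Z)}$ in $\mathcal{D}^{R}_{\infty}(\widehat u)$, and \Cref{lemm:local:to:global} upgrades this to an element of $\mathcal{D}_{\infty}(\widehat u)$ with $L^\infty$-norm at most $C\|\rho\|_{L^\infty(Z)}+R^{-1}\|u\|_{L^\infty(Z)}$, delivering the quantitative reverse embedding. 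The one point requiring care is that the nullset $A_x$ is $x$-dependent; this is harmless, because the Haj{\l}asz inequality is established pointwise for each pair $(x,y)\in (Z\setminus N)^2$, so an $x$-dependent $\infty$-negligible curve family can be subtracted without jeopardising the existence of a good curve.
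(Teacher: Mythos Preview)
Your approach via approximate continuity is correct and cleaner than the paper's route through Lusin--Egoroff, Lebesgue density points, and a final Lindel\"of pass; the infinitesimal doubling hypothesis validates the Lebesgue differentiation theorem (hence approximate continuity) just as it validates the density theorem in the paper's argument, and your handling of the $x$-dependent nullset $A_x$ is fine for exactly the reason you state.

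There is, however, a scale mismatch. The paper defines ``Haj\l asz gradient up to scale $R$'' by requiring \eqref{eq:Hineq} for every $x_0\in Z$ and every $x,y\in B(x_0,R)\setminus N$, so $d(x,y)$ may range up to $2R$. You only treat pairs with $d(x,y)<R$, applying the thick quasiconvexity with center $x_0=x$; as written, this establishes $g\in\mathcal{D}^{R/2}(\widehat u)$ rather than $\mathcal{D}^R(\widehat u)$. The fix is minor: start with $x,y\in B(x_0,R)\setminus N$, choose $\varepsilon>0$ so that $B(x,\varepsilon)\cup B(y,\varepsilon)\subset B(x_0,R)$ and $CR+\varepsilon<C'R$, and observe that any point on a surviving curve $\gamma$ lies within $\ell(\gamma)/2<CR$ of one of the endpoints $x',y'$, so $|\gamma|\subset B(x,C'R)\cup B(y,C'R)$. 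Removing $\Gamma^+_{A_x\cup A_y}$ then gives $\int_\gamma\rho\,ds\le \max(g(x),g(y))\,d(x',y')\le (g(x)+g(y))(d(x,y)+2\varepsilon)$, and the limit argument goes through unchanged. With this adjustment you recover the full claim; the paper's proof does exactly this midpoint split (phrased as $|\gamma_r|\subset\overline B(x_r,2^{-1}Cd(x_r,y_r))\cup\overline B(y_r,2^{-1}Cd(x_r,y_r))$), which is why both $\widehat g_R(x)$ and $\widehat g_R(y)$ appear in \eqref{eq:p-thick-quasiconvexity:mod}.

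The ``in particular'' statement and the norm equivalence are unaffected by this, since $C\|\rho\|_{L^\infty(Z)}$ majorises $g$ globally and \Cref{lemm:local:to:global} is insensitive to whether the scale is $R$ or $R/2$.
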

\begin{proof}

Consider a measurable function $\widehat{u} \colon Z \to \mathbb{R}$ with an $\infty$-weak upper gradient $\rho$. By \Cref{lemm:gradient:absolutecontinuity}, there exists an $\infty$-negligible family $\Gamma_{ \widehat{u}, \rho }$ such that for every absolutely continuous $\gamma \not\in \Gamma_{ \widehat{u}, \rho }$, the composition $\widehat u \circ \gamma$ is absolutely continuous and satisfies
\begin{equation*}
    \| ( \widehat{u} \circ \gamma )' \|(t)
    \leq 
    \rho( \gamma(t) ) \| \gamma' \|(t)
    \quad\text{for almost every $t$.}
\end{equation*}
Consider the $\mu$-negligible set $E = \left\{z\in Z \colon\rho(z) > \| \rho \|_{L^\infty(Z)} \right\}$ and the $\infty$-negligible family 
$$\Gamma_0 = \left\{ \gamma \colon \int_{\gamma} \chi_{E} \,ds > 0 \right\} \cup \Gamma_{ \widehat{u}, \rho }.$$
For every nonconstant absolutely continuous $\gamma \not\in \Gamma_{0}$, the composition $\widehat u\circ \gamma$ is absolutely continuous, and
\begin{equation}\label{eq:p-thick-quasiconvexity}
    | ( \widehat{u} \circ \gamma )' |(t)
    \leq
    \|\rho\|_{ L^\infty(U) } \| \gamma' \|(t)
    \quad\text{for almost every $t$ for any open set $U \supset |\gamma|$.}
\end{equation}
Fix $x_{0} \in Z$ and $U = B( x_0, R )$. By the Lusin–Egoroff theorem, for each $\eta > 0$, there exists a Borel set $K_{\eta} \subset U$ such that $\mu( U \setminus K_\eta ) < \eta$ and $\widehat u|_{ K_\eta }$ is continuous. Since $\mu( U ) > 0$, we may assume that $\mu( K_{\eta} ) > 0$ by taking a smaller $\eta$ if needed.

Let $G_{ \eta } \subset K_{\eta}$ denote the set of Lebesgue density points of $K_\eta$. In particular, due to the infinitesimally doubling condition, the Lebesgue differentiation theorem implies that $\mu (K_{\eta}\setminus G_{\eta})=0$. For every pair $x,y \in G_{\eta}$, consider $r_0 > 0$ such that for all $0 < r< r_0$, we have $B( x, r )\cup B(y,r) \subset B( x_0, R )$ and $\overline{B}( x, r ) \cap \overline{B}( y, r ) = \emptyset$. Observe that $\mu( B( x, r ) \cap G_\eta ) > 0$ and $\mu( B( y, r ) \cap G_\eta ) > 0$ by definition of $G_\eta$.

By the $\infty$-thick $(C,R)$-quasiconvexity, the family $\Gamma( B( x, r ) \cap G_{\eta}, B( y, r ) \cap G_{\eta}; C )$ is not $\infty$-negligible for $r \in (0,r_0)$. In particular, for every $r\in (0,r_0)$, there exists a nonconstant absolutely continuous curve $\gamma_r \colon [0,1] \rightarrow Z$ in $\Gamma( B( x, r ) \cap G_{\eta}, B(y, r ) \cap G_{\eta}; C )\setminus\Gamma_0$. Denote $x_r = \gamma_r(0)$ and $y_r = \gamma_r(1)$, and fix $s > R$. By the $C$-quasiconvexity of $\gamma_r$, it holds that
$$|\gamma_r| \subset \overline{B}( x_r, 2^{-1}Cd(x_r,y_r) ) \cup \overline{B}( y_r, 2^{-1}Cd(x_r,y_r)) \subset B( x_r, C s) \cup B( y_r, Cs ).$$
Motivated by this inclusion and \eqref{eq:p-thick-quasiconvexity}, we denote
\begin{align*}
    \widehat{g}_{R}(z) \coloneqq C\lim_{ s \rightarrow R^{+} }\| \rho \|_{ L^{\infty}( B( z, Cs) ) }.
\end{align*}
Notice that $\widehat{g}_{R}$ is upper semicontinuous. Integrating \eqref{eq:p-thick-quasiconvexity} implies that
\begin{align}\label{eq:p-thick-quasiconvexity:mod}
    | \widehat{u}( \gamma_r(1) ) - \widehat{u}( \gamma_r(0) ) |
    &\leq 
    \lim_{ s \rightarrow R^{+} }
    \|\rho\|_{L^{\infty}(B(x_r,Cs)\cup B(y_r,Cs))}\ell(\gamma_r) \nonumber\\
    &\leq 
    ( \widehat{g}_{R}( \gamma_r(0) ) + \widehat{g}_{R}( \gamma_r(1)) )
    d( \gamma_r(1), \gamma_r(0) ).
\end{align}
By passing to the limit $r \rightarrow 0^{+}$, the continuity of $\widehat{u}|_{ G_\eta }$, inequality \eqref{eq:p-thick-quasiconvexity:mod}, and the upper semicontinuity of $\widehat{g}_{R}$ imply 
\begin{equation}\label{eq:Lipsc-continuity}
    | \widehat{u}( x) - \widehat{u}(y ) |
    \leq
    ( \widehat{g}_{R}( x ) + \widehat{g}_{R}( y ) )d( x, y )
    \quad\text{for every $x, y \in G_{\eta} \cap B( x_0, R )$.}
\end{equation}
By passing to the limit $\eta \rightarrow 0^{+}$, we deduce that \eqref{eq:Lipsc-continuity} holds for every pair $x, y \in B( x_0, R ) \setminus N$ for some negligible set $\mu( N ) = 0$. We cover $Z$ by balls $\left\{ B( x_{0}, R ) \right\}_{ x_0 \in Z }$ and apply the Lindelöf property of $Z$ to obtain a countable subcover $\left\{ B( x_i, R ) \right\}_{ i \in \mathbb{N} }$ of $Z$. Since \eqref{eq:Lipsc-continuity} holds for every $x,y \in B( x_i, R ) \setminus N_i$ outside some negligible set $N_i \subset B( x_i, R )$, $i \in \mathbb{N}$, we conclude that \eqref{eq:Lipsc-continuity} holds for every $x, y \in B( z, R ) \setminus \bigcup_{ i \in \mathbb{N} } N_i$ for every $z \in Z$. This means that  $\widehat{g}_{R}(z)$ is a Haj{\l}asz gradient of $\widehat u$ up to scale $R>0$. In particular, for every $C'>C$, it holds that $C\|\rho\|_{L^{\infty}(B(z, C'R))}$ is a Haj{\l}asz gradient of $\widehat u$ up to scale $R>0$ and
\begin{equation*}
    \inf\left\{ \| g \|_{ L^{\infty}(Z) } \mid \text{$g$ is a Haj\l{}asz gradient up to scale $R$} \right\}
    \leq
    C \| \rho \|_{ L^{\infty}(Z) }.
\end{equation*}
This gives the first part of the claim.

Recall that there is a $2$-Lipschitz linear embedding $M^{1,\infty}(Z) \xhookrightarrow{} W^{1,\infty}(Z)$ by \Cref{lemma:M:in:W}. Then \Cref{lemm:local:to:global} and the argument above give that $W^{1,\infty}(Z) = M^{1,\infty}(Z)$ with quantitatively equivalent norms.
\end{proof}

\begin{proposition}\label{prop:Hajlasz:to:thick}
Let $Z$ be a metric measure space that is infinitesimally doubling and locally complete. If there are $C > 1$ and $R > 0$ such that for every $\widehat{u} \in \widehat{N}^{1,\infty}( Z )$ and $\rho \in \mathcal{D}_{N,\infty}( \widehat{u} )$, the constant function $2^{-1}C \| \rho \|_{ L^{\infty}(Z) }$ is a Haj{\l}asz gradient of $u$ up to scale $R$, then $Z$ is $\infty$-thick $(C',R)$-quasiconvex for every $C' > C$.
\end{proposition}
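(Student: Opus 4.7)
The plan is to argue the contrapositive. Suppose, for some $C' > C$, that $Z$ is not $\infty$-thick $(C',R)$-quasiconvex: there exist $x_0 \in Z$, measurable sets $E, F \subset B(x_0,R)$ with $\mu(E), \mu(F) > 0$, and a Borel null set $N \subset Z$ such that $\Gamma(E,F;C') \subset \Gamma_N^+$. I would construct a bounded function $\widehat u$ admitting $\rho \equiv 1$ as an $\infty$-weak upper gradient and violating the claimed Haj{\l}asz inequality on $B(x_0,R)$. First, I would rule out $\mu(E \cap F) > 0$: any density point there would give a constant curve in $\Gamma(E,F;C')$, forbidden in an $\infty$-negligible family. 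Using infinitesimal doubling, I then choose Lebesgue density points $x^* \in E$ and $y^* \in F$ (now with $d(x^*, y^*) > 0$) and pick $\epsilon \in (0, d(x^*, y^*)/10)$, setting $E' = E \cap B(x^*, \epsilon)$ and $F' = F \cap B(y^*, \epsilon)$, both of positive measure.

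The candidate test function is a truncated essential distance from $E'$ avoiding $N$:
\[
 \tilde u(z) = \inf\bigl\{ \ell(\gamma) : \gamma \text{ rectifiable},\; \gamma(0) \in E',\; \gamma \text{ ends at } z,\; \textstyle\int_\gamma \chi_N\,ds = 0 \bigr\},
\]
truncated by $\widehat u(z) = \min(\tilde u(z), 4C'R)$ and replaced by a Borel $\mu$-equivalent representative. Concatenating an approximate minimizer with any rectifiable $\sigma$ satisfying $\int_\sigma \chi_N\,ds = 0$ shows $|\tilde u(\sigma(a)) - \tilde u(\sigma(b))| \leq \ell(\sigma)$; since $\min(\cdot, 4C'R)$ is $1$-Lipschitz and the exceptional family $\Gamma_N^+$ is $\infty$-negligible, $\rho \equiv 1$ is an $\infty$-weak upper gradient of $\widehat u$. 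Thus $\widehat u \in \widehat N^{1,\infty}(Z)$ with $\|\rho\|_{L^\infty(Z)} = 1$, and the hypothesis would supply a null set $M$ with $|\widehat u(x) - \widehat u(y)| \leq C\,d(x,y)$ for every $x, y \in B(x_0, R) \setminus M$.

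To close, I pick $x \in E' \setminus M$ and $y \in F' \setminus M$, both possible since $\mu(E'), \mu(F') > 0$. The constant curve at $x$ gives $\widehat u(x) = 0$. On the other hand, any admissible curve from $x' \in E'$ to $y \in F'$ with $\ell(\gamma) \leq C' d(x', y)$ would lie in $\Gamma(E,F;C') \subset \Gamma_N^+$, contradicting $\int_\gamma \chi_N\,ds = 0$. Hence every admissible curve satisfies $\ell(\gamma) > C'd(\gamma(0), y) \geq C'(d(x^*, y^*) - 2\epsilon)$, which — once the truncation bound $4C'R$ is checked not to bind — gives $\widehat u(y) \geq C'(d(x^*, y^*) - 2\epsilon)$. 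Combined with $d(x,y) \leq d(x^*, y^*) + 2\epsilon$ and the Haj{\l}asz bound, this yields
\[
  C'\bigl(d(x^*,y^*) - 2\epsilon\bigr) \leq \widehat u(y) - \widehat u(x) \leq C\bigl(d(x^*,y^*) + 2\epsilon\bigr),
\]
which fails once $\epsilon < (C'-C)d(x^*,y^*)/(2(C'+C))$, since $C' > C$. This is the desired contradiction.

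The main technical obstacle I anticipate is the measurability of the infimum-defined $\tilde u$: it is a priori only analytically measurable. I would handle this by passing to a Borel $\mu$-equivalent representative; the $\infty$-weak upper gradient property survives because modification on a null set only affects curves in the $\infty$-negligible family $\Gamma^+$ over that set, and the crucial lower bound $\widehat u(y) \geq C'(d(x^*, y^*) - 2\epsilon)$ at $y \in F'$ continues to hold for $y$ outside the relevant null sets, which is exactly where the Haj{\l}asz inequality is applied.
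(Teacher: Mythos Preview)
Your approach is essentially the same as the paper's: argue by contradiction, shrink $E$ and $F$ to small neighbourhoods of density points, construct a path-distance type function from the shrunken $E$ that avoids the bad null set, truncate it to make it bounded, observe that $\rho\equiv 1$ is an $\infty$-weak upper gradient, and read off a contradiction with the Haj{\l}asz bound. The paper's construction differs only cosmetically: instead of your direct infimum $\tilde u$, it writes the function as a decreasing limit $v(x)=\lim_{k\to\infty}\inf\{\int_\gamma (1+\rho/k)\,ds\}$ (which is the same quantity), and it truncates via $u=\min\{w,v\}$ with $w$ a Lipschitz distance function rather than a constant.

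One point to clean up is the measurability paragraph. Your statement that ``the $\infty$-weak upper gradient property survives because modification on a null set only affects curves in the $\infty$-negligible family $\Gamma^+$ over that set'' is not correct as written: the upper-gradient inequality is about \emph{endpoint values}, and the family of curves with an endpoint in a given null set need not be $\infty$-negligible. Fortunately the Borel-representative detour is unnecessary. Once you know $\tilde u$ is $\mu$-measurable (which does follow from analytic/Suslin measurability, and this is exactly where local completeness is used---the paper invokes \cite[Corollary~1.10]{Ja:Ja:Ro:Ro:Sha:07} at this step), your truncation $\widehat u=\min\{\tilde u,4C'R\}$ already lies in $\mathcal L^\infty(Z)$, your concatenation argument shows $\rho\equiv 1\in\mathcal D_{N,\infty}(\widehat u)$ directly, and hence $\widehat u\in\widehat N^{1,\infty}(Z)$ without any modification. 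The pointwise bounds $\widehat u\equiv 0$ on $E'$ and $\widehat u\geq C'(d(x^*,y^*)-2\epsilon)$ on $F'$ then hold everywhere on those sets, so picking $x\in E'\setminus M$ and $y\in F'\setminus M$ finishes the contradiction exactly as you describe.
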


\begin{proof}
We suppose that the claim is not true and derive a contradiction; fix $C' > C$. Assume that there exist $z \in Z$ and measurable sets $E, F \subset B( z, R )$ with $\mu( E ), \mu( F ) > 0$ so that $\Gamma( E, F; C' )$ is $\infty$-negligible. We consider a distinct pair of Lebesgue density points $( x', y' ) \in E \times F$ and the sets $E_r = E \cap B( x', r )$ and $F_r = F \cap B(y',r)$ for small $r > 0$.  Note that the existence of such pair $(x',y')$ is guaranteed by the Lebesgue differentiation theorem, which in turn follows from the infinitesimally doubling assumption. Then
\begin{equation*}
    a(r) = \max\left\{ \diam E_{r}, \diam F_{r} \right\} \leq 2r
    \quad\text{and}\quad
    b(r) = d( E_{r}, F_{r} ) 
\end{equation*}
are decreasing and increasing, respectively, and $\lim_{ r \rightarrow 0^{+} } a(r) = 0$ and $\lim_{ r \rightarrow 0^{+} } b(r) = d(x',y') > 0$. We consider $r > 0$ so small that $b(r) > 0$ and define $\lambda_r = a(r)/b(r)$. Observe now that $\Gamma( E_{r}, F_{r}; C' ) \subset \Gamma( E, F; C' )$ is still $\infty$-negligible. Since $\lambda_r$ converges to zero as $r \rightarrow 0^{+}$, we may fix $r > 0$ so small that
\begin{equation}\label{eq:smallenough}
    \lambda_r < 1 \quad\text{and}\quad \frac{ C' }{ 1 + 2 \lambda_r } > C.
\end{equation}
We use \eqref{eq:smallenough} to derive a contradiction. We first observe that
\begin{equation*}
    b(r) \geq d(x,y) - 2a(r) = d(x,y) - 2 \lambda_r b(r)
    \quad\text{when $(x,y) \in E_r \times F_r$}
\end{equation*}
and so
\begin{align}\label{eq:pointwiseinequality}
    b(r) \geq \frac{ d(x,y) }{ 1 + 2 \lambda_r }
    \quad\text{for every $(x,y) \in E_r \times F_r$.}
\end{align}
Since $\Gamma( E_r, F_r; C' )$ is $\infty$-negligible, there exists a  nonnegative Borel function $\rho\in \mathcal{L}^{\infty}( Z )$ so that  $\int_{\gamma} \rho\, ds=\infty$ for every rectifiable $\gamma\in \Gamma( E_r, F_r; C' )$. On the other hand, since $$\mu (\{z \colon \rho(z)>\|\rho\|_{L^{\infty}(Z)}\})=0\quad \Rightarrow\quad \left\{\gamma \colon \int_\gamma \chi_{\left\{\rho > \|\rho\|_{L^{\infty}(Z)}  \right\}}ds>0\right\} \;\text{is $\infty$-negligible,}$$ we conclude that 
\begin{equation}\label{eq:negligibleset}
    \Gamma_{0}
    \coloneqq
    \left\{
        \gamma
        \colon
        \int_{ \gamma } \rho + \infty \cdot \chi_{ \left\{ \rho > \|\rho\|_{L^{\infty}(Z)} \right\} } \,ds = \infty
    \right\}
\end{equation}
is $\infty$-negligible and $\Gamma( E_r, F_r; C' ) \subset \Gamma_0$.

Define a function $v \colon Z \rightarrow \left[0, \infty\right]$ as follows:
\begin{equation*}
    v(x)
    =
    \lim_{ k \rightarrow \infty }\left(
    \inf\left\{
        \int_{ \gamma } \left( \chi_Z + \frac{\rho}{k} \right) \,ds
        \colon
        \gamma(0) \in E_r, \gamma(1) = x
    \right\}\right).
\end{equation*}
If for some  $x\in Z$ there do not exist rectifiable curves $\gamma \not\in \Gamma_{0}$ with $\gamma(0) \in E_r
$, $\gamma(1) = x$, we set $v(x)=\infty$. Moreover, by construction, $v \equiv 0$ in $E_r$. The measurability of $v$ can be argued as in \cite[Corollary 1.10]{Ja:Ja:Ro:Ro:Sha:07}; this is the only stage where we use the local completeness of $Z$.

By \eqref{eq:negligibleset}, the definition of $\Gamma(E_r,F_r;C')$, and \eqref{eq:pointwiseinequality}, we have
\begin{equation}\label{eq:smallenough:application}
    | v(y) - v(x) |
    =
    v(y)
    \geq
    C' b(r)
    \geq
    \dfrac{C' d(x,y)}{1+2\lambda_r}
    >
    C d(x,y)
    \quad\text{for every $(x,y) \in E_r \times F_r$}.
\end{equation}
Consider next the bounded $1$-Lipschitz function $w(x) = \max\left\{ 0, ( 2 + C' ) b(r) - d(x,E_r) \right\}$.

Denote $$u(x) = \min\left\{ w(x), v(x) \right\}.$$ We claim that $u\in W^{1,\infty}(Z)$. We make three observations. First, $u$ is measurable as a minimum of two measurable functions. Second, the fact $w\in L^{\infty}(Z)$ implies $u\in L^{\infty}(Z)$. And third, $\chi_{Z}\in L^{\infty}(Z)$ is a weak upper gradient of $u$. Indeed, by using \cite[Proposition 6.3.23]{HKST2015}, it is enough to prove that $\chi_{Z}$ is a weak upper gradient for both $w$ and $v$. As $w$ is $1$-Lipschitz, the conclusion is clear for $w$. For $v$, observe that whenever $\gamma \colon \left[0,1\right] \rightarrow Z$ is a constant speed curve and $\gamma \not\in \Gamma_0$ with $v( \gamma(0) ) < \infty$, then $v( \gamma(1) ) < \infty$ and
\begin{equation*}
    | v( \gamma(1) ) - v( \gamma(0) ) | \leq \int_{ \gamma } \chi_{Z} \,ds.
\end{equation*}
The latter inequality follows straight-forwardly from the definition of $v$ and $\Gamma_0$.

We have proved that $u\in \widehat{N}^{1,\infty}(Z)$ with upper gradient $\chi_{Z}$, and thus, by assumption, $z \mapsto 2^{-1}C$ is a Haj\l{}asz gradient of $u$ up to scale $R$. Notice that, by \eqref{eq:smallenough:application} and $w(y) \geq (1-\lambda_r+C')b(r) \geq C' b(r)$ for $y\in  F_r$, we have
\begin{equation*}
    | u(x) - u(y) |
    =
    | u(y) |
    \geq 
    C' b(r)
    >
    C d(x,y)
    \quad\text{for every $(x,y) \in E_r \times F_r$.}
\end{equation*}
This contradicts the fact that $2^{-1}C$ is a Haj\l{}asz gradient of $u$ up to scale $R$. So the claim follows.
\end{proof}

The next step is to show that if we have a doubling measure up to some scale, then the weak $(1,\infty)$-Poincaré inequality implies the $\infty$-thick $(C,R)$-quasiconvexity. The proof of this fact is nowadays quite standard for experts in the field (see for instance \cite[Proposition 4.3]{Durand_2}), so we omit the details. 

\begin{proposition}\label{prop:quasiconvexity_2}
    Let $Z$ be a metric measure space that is locally complete and doubling up to scale $r_1>0$ with constant $C_{Z}$. Suppose that $Z$ supports a weak $(1,\infty)$-Poincaré inequality up to some scale $r_2>0$ with constants $C>0$, $\lambda\geq 1$. Then $Z$ is $\infty$-thick $(C',R)$-quasiconvex for constants $C'>1$ and $R>0$ depending on $r_1$, $r_2$, $C$,  $\lambda$ and $C_Z$.
\end{proposition}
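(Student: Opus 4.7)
I would prove the contrapositive by adapting the construction from the proof of \Cref{prop:Hajlasz:to:thick} and combining it with the weak Poincaré inequality at an appropriate scale. Set $R_0 = \min(r_1, r_2/\lambda)/6$ and let $C' \geq 1$ be a constant depending on $C, \lambda, C_Z$ to be fixed at the end. Suppose, for contradiction, that $Z$ is not $\infty$-thick $(C', R_0)$-quasiconvex. Then there exist $x_0 \in Z$ and measurable $E, F \subset B(x_0, R_0)$ with $\mu(E), \mu(F) > 0$ so that $\Gamma(E,F;C')$ is $\infty$-negligible. Since a constant curve is rectifiable and any family containing a constant curve is not $\infty$-negligible, we must have $\mu(E \cap F) = 0$, so after removing a null set we may assume $E$ and $F$ are disjoint.

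The doubling hypothesis (up to scale) yields the Lebesgue differentiation theorem on $Z$, so I pick density points $x' \in E$ and $y' \in F$, which must be distinct. Let $\widetilde R = d(x', y') > 0$ and $\delta = \widetilde R / 100$. Put $E' = E \cap B(x', \delta)$ and $F' = F \cap B(y', \delta)$. By the density property, after decreasing $\delta$ if needed, I may assume $\mu(E') \geq \mu(B(x', \delta))/2$ and $\mu(F') \geq \mu(B(y', \delta))/2$. The family $\Gamma(E', F'; C')$ is a subfamily of $\Gamma(E, F; C')$ and hence $\infty$-negligible, while $d(E', F') \geq \widetilde R/2$.

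Following the construction in the proof of \Cref{prop:Hajlasz:to:thick} almost verbatim (take the infimum of $\int_\gamma(\chi_Z + \rho/k)\,ds$ over rectifiable curves from $E'$ to $z$ avoiding the negligible family, then pass to $k \to \infty$ and truncate), I obtain a bounded measurable $u \in W^{1,\infty}(Z)$ with $\chi_Z$ as an $\infty$-weak upper gradient, $u \equiv 0$ on $E'$, and $u \geq C' d(E', F') \geq C' \widetilde R/2$ on $F'$. Applying the weak $(1,\infty)$-Poincaré inequality on $B = B(x', 2\widetilde R)$, admissible since $2\lambda \widetilde R \leq r_2$, yields
\[
\aint{B} |u - u_B|\,d\mu \leq 4 C \widetilde R.
\]
For the lower bound, the inequality $|c| + |C'\widetilde R/2 - c| \geq C'\widetilde R/2$ for any $c \in \mathbb R$, combined with the boundary values of $u$ on $E'$ and $F'$, gives
\[
\aint{B} |u - u_B|\,d\mu \geq \frac{\min(\mu(E'), \mu(F'))}{\mu(B)} \cdot \frac{C' \widetilde R}{2}.
\]
Doubling up to scale $r_1$, applied at scales running from $\delta$ to $3\widetilde R \leq r_1$, gives $\mu(B)/\min(\mu(E'), \mu(F')) \leq C_1(C_Z)$. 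Combining both estimates forces $C' \leq 16 C \cdot C_1(C_Z)$, so choosing $C'$ strictly larger produces the desired contradiction.

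The main obstacle is arranging the scales so that doubling and Poincaré both apply and so that the ratio $\mu(B(x',\delta))/\mu(B)$ stays controlled from below. Working at the intrinsic scale $\widetilde R = d(x', y')$, rather than at the \emph{a priori} scale $R_0$, and tying both the cutoff radius $\delta$ and the Poincaré ball to $\widetilde R$ sidesteps the degenerate case in which the density points happen to coincide at a scale much smaller than $R_0$, and removes the need for case analysis on the closeness of $x'$ and $y'$.
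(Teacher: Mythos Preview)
Your single-scale application of the Poincar\'e inequality has a genuine gap at the density step. You fix $\delta=\widetilde R/100$ and then write ``after decreasing $\delta$ if needed'' to arrange $\mu(E')\geq\mu(B(x',\delta))/2$ and $\mu(F')\geq\mu(B(y',\delta))/2$. But how small $\delta$ must be depends on the particular sets $E,F$ and the chosen density points $x',y'$, not on the data $r_1,r_2,C,\lambda,C_Z$. Your subsequent claim that doubling gives $\mu(B)/\min(\mu(E'),\mu(F'))\leq C_1(C_Z)$ then fails: the number of doublings needed to pass from radius $\delta$ to radius $2\widetilde R$ is of order $\log_2(\widetilde R/\delta)$, which is uncontrolled. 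Since the threshold $C'$ must be chosen \emph{in advance} as a function of the data alone, the inequality $C'\leq 16\,C\,C_1$ produces no contradiction. Your closing paragraph claims to have handled exactly this issue by tying $\delta$ to $\widetilde R$, but the clause ``after decreasing $\delta$ if needed'' undoes that tie.

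The paper omits the proof and points to the standard argument in \cite[Proposition~4.3]{Durand_2}. That route avoids your difficulty by replacing the single Poincar\'e estimate on $B(x',2\widetilde R)$ with a telescoping chain: one applies the Poincar\'e inequality on each ball $B_j=B(x',2^{-j}\widetilde R)$ and uses doubling to compare successive averages, obtaining
\[
|u(x')-u(y')|\leq C''(C,\lambda,C_Z)\,d(x',y')
\]
at all Lebesgue points $x',y'$ of $u$. Here the density of $x'$ in $E$ and of $y'$ in $F$ is used only qualitatively, to identify the Lebesgue values $u(x')=0$ and $u(y')\geq C'\,d(x',y')/2$ as limits of ball averages; no quantitative scale $\delta$ is required. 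Choosing $C'>2C''$ then yields the contradiction. Your construction of $u$ is perfectly suited to this; the only missing ingredient is the chaining.
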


Related to \Cref{prop:quasiconvexity_2}, it is not clear to us if we can dispense with the doubling assumption on $Z$. We formulate the problem as the following special case.

\begin{question}\label{ques:PI}
If a domain $\Omega\subset\R^n$ supports a $(1,\infty)$-weak Poincaré inequality, does it follow that $\Omega$ is quasiconvex?
\end{question}

To solve this problem positively, a natural attempt is to try to deform the Lebesgue measure in $\Omega$ to a doubling measure, mutually absolutely continuous with respect to the Lebesgue measure, for which the $\infty$-weak Poincaré inequality still holds, and then use \Cref{prop:quasiconvexity_2}. However, this problem seems subtle since, according to \cite{Saks:99}, there are (Jordan) domains $\Omega \subset \mathbb{R}^{n}$ which do not support any doubling measure. This implies, in particular, that the proof method of \Cref{prop:quasiconvexity_2} cannot be used without finer analysis.

Finally, we provide the proof of \Cref{thm:equivalence}.
\begin{proof}[Proof of \Cref{thm:equivalence}]
    Recall that  $M^{1,\infty}(Z)$ is $2$-isomorphic to $\LIP^{\infty}(Z)$.

    $(2)\Rightarrow (1)$: We are assuming that $Z$ is very $\infty$-thick $(C,R)$-quasiconvex, hence $\infty$-thick $(C,R)$-quasiconvex. By using \Cref{prop:thick:to:Hajlasz} and \Cref{lemm:local:to:global}, we conclude that $M^{1,\infty}(Z)=W^{1,\infty}(Z)$ and that
    \begin{align*}
       \| u \|_{ M^{1,\infty}(Z) }
       &\leq
       \|u\|_{ L^{\infty}(Z) } + \max\left\{ C \mathscr{E}_{\infty}(u), R^{-1}\|u\|_{L^{\infty}(Z)} \right\}
       \\
       &\leq
       \max \{C, 1+R^{-1}\}
       \| u \|_{  W^{1,\infty}(Z) }
   \end{align*}
    for every $u \in W^{1,\infty}(Z)$. Recall that $\mathscr{E}_{\infty}(u)$ denotes the Sobolev energy of $u$. This implies the norm equivalence.

    $(1)\Rightarrow (2)$: Assume $M^{1,\infty}(Z)=W^{1,\infty}(Z) $. Thanks to \Cref{thm:very_thick-iff-infty-thick}, it is enough to prove that $Z$ is $\infty$-thick $(C,R)$-quasiconvex for some $C\geq 1$, $R>0$. For that we need a slight modification of the proof of \Cref{prop:Hajlasz:to:thick}. We provide the details. 
      
    From $M^{1,\infty}(Z)=W^{1,\infty}(Z)$, it follows that there exists a constant $A \geq 1$ (cf. to $c^{-1}$ in \eqref{eq:equivalenceofnorms}) such that every $u \in W^{1,\infty}( Z )$ has a Lipschitz representative $\widehat u$ satisfying $\| \widehat u \|_{ \LIP^{\infty}(Z) } \leq A \| u \|_{ W^{1,\infty}(Z) }$. If $A = 1$, we consider $R > 0$ and if $A > 1$, we consider $R \in ( 0, 1/( 2(A-1) ) )$. We denote
    $$C_0=\begin{cases}
        1, & \text{if $A = 1$ and $R > 0$, and}\\
        \frac{2(A-1)2R+A}{1-2R(A-1)}, &\text{if $A > 1$ and $(2R)(A-1) < 1$}.
    \end{cases}$$
    If $\epsilon > 0$, there exist $C \in (C_0, C_0+\epsilon)$ and $C' \in ( C, C_0 + \epsilon )$ for which
    \begin{align*}
        C > ( A-1 )(2R)( 2 + C' ) + A.
    \end{align*}
    We prove by contradiction that $Z$ is $\infty$-thick $(C',R)$-quasiconvex. Suppose that there exist $x_0 \in Z$ and measurable sets $E, F \subset B( x_0, R )$ such that $\mu(E)>0$, $\mu(F) > 0$ and that $\Gamma( E, F; C' )$ is $\infty$-negligible. We construct $v$, $w$, and $u$ as in the proof of \Cref{prop:Hajlasz:to:thick}, by using the above radius $R$ and constants $C$ and $C'$. Then the constructed element $u \in W^{1,\infty}(Z)$ has a Lipschitz representative $\widehat{u}$ and the construction guarantees that $\LIP(\widehat{u}) > C$ and that $\| u \|_{L^{\infty}(Z)} \leq \| w \|_{L^{\infty}(Z)} \leq (2R)(2+C')$. Therefore
    \begin{align*}
        C
        &<\LIP(\widehat{u})=
        \| \widehat{u} \|_{ \LIP^{\infty}( Z ) } - \|u\|_{ L^{\infty}(Z) }
        \leq
        (A-1) \| u \|_{ L^{\infty}(Z) } + A \|\rho_{u}\|_{ L^{\infty}(Z) }
        \\
        &\leq
        ( A - 1 ) (2R)(2+C') + A.
    \end{align*}
    This is a contradiction. Therefore $Z$ is $\infty$-thick $(C',R)$-quasiconvex, where $C'>C_0$ is arbitrary. Recalling \Cref{cor:thick-to-very-thick}, this implies that $Z$ is very $\infty$-thick $(C',R)$-quasiconvex for every $C' > C_0$.

Let us prove that $(2)$ implies that $Z$ supports a weak $(1,\infty)$-Poincaré inequality up to some scale $r>0$. To this end, if $C' > C$ and $r \in (0,R]$ such that $Z$ is $\infty$-thick $(C,R)$-quasiconvex and $u \in W^{1,\infty}(Z)$, it follows from \Cref{prop:thick:to:Hajlasz} that $z \mapsto C \| \rho_u \|_{ L^{\infty}( B(z,C'r) ) }$ is a Haj\l{}asz gradient of $u$ up to scale $r$ and thus
\begin{align*}
    \aint{ B(x,r) } | u - u_{ B(x,r) } |\,d\mu
    &\leq
    \aint{ B(x,r) } \aint{ B(x,r) } | u(z)-u(y) |\,d\mu(y)\,d\mu(z)
    \\
    &\leq
    2C \diam( B(x,r) )\| \rho_u \|_{ L^{\infty}(B(x,C'r)) }.
\end{align*}
To conclude the proof of \Cref{thm:equivalence}, we need to show that under a doubling measure up to some scale, the weak $(1,\infty)$-Poincaré inequality implies $(2)$. This follows from \Cref{prop:quasiconvexity_2} and  \Cref{thm:very_thick-iff-infty-thick}.
\end{proof}

\begin{remark}\label{rem:asymptoticbehaviour}
We now give some comments on the asymptotic behavior of the constants $A\in [1,\infty)$ and $(C,R)\subset [1,\infty)\times (0,\infty]$ in the proof of \Cref{thm:equivalence}. 
  \begin{itemize}
      \item $M^{1,\infty}(Z) \neq W^{1,\infty}(Z)$ if and only if $Z$ is not $\infty$-thick $(C,R)$-quasiconvex for any $C \geq 1$, $R > 0$.
      \item $A = 1$ if and only if $Z$ is very $\infty$-thick $C$-quasiconvexity for every $C>1$. This proves \Cref{prop:sharpness}.
      \item If the optimal constant $A$ satisfies $A\to\infty$, any constants for which $\infty$-thick $(C,R)$-quasiconvexity holds satisfy $R\to 0$ and $C\to \infty$.
      \item If $A\to 1^+$, there exist $R=R(A)\in (0, 1/(2(A-1)))$ with $R\to\infty$ and $2R(A-1)\to 0^+$. In particular, we obtain very $\infty$-thick $(C,R)$-quasiconvexity with $R\to \infty$ and $C\to 1^+$.
  \end{itemize}
\end{remark}

Next, we prove \Cref{thm:bounded:doubling}.
\begin{proof}[Proof of \Cref{thm:bounded:doubling}]
We will prove the equivalence with the chain of implications (2) $\Rightarrow$ (3) $\Rightarrow$ (1) $\Rightarrow$ (2). By \Cref{lemm:Lip:Haj}, it suffices to prove the claim for $M^{1,\infty}(Z)$ in place of $\LIP^{\infty}(Z)$. Now, by assumption (2), we have that $Z$ is very $\infty$-thick $(C,R)$-quasiconvex for every $R > 0$. It follows from \Cref{thm:very_thick-iff-infty-thick} and \Cref{prop:thick:to:Hajlasz} that if $u \in W^{1,\infty}(Z)$ has an $\infty$-weak upper gradient $\rho$, then $2^{-1}C \| \rho \|_{ L^{\infty}(Z) }$ is a Haj{\l}asz gradient of $u$ up to scale $R$ for every $R > 0$. Thus $2^{-1}C\| \rho \|_{ L^{\infty}(Z) }$ is a Haj{\l}asz gradient of $u$. By the arbitrariness of $u$, the equality $M^{1,\infty}(Z) = W^{1,\infty}(Z)$ as sets and the equivalence of the energy seminorms follows.

Under assumption (3), let $C$ be such that
\begin{align*}
    2^{-1}C
    >
    \sup
    \left\{
        \frac{ \mathcal{E}_{\infty}^{\infty}(u) }{ \mathscr{E}_{\infty}(u) }
        \colon
        u \in W^{1,\infty}(Z), \mathscr{E}_{\infty}(u) > 0
    \right\};
\end{align*}
the existence of $C$ follows from the equivalence of the seminorms. Therefore the constant function $z \mapsto 2^{-1}C\mathscr{E}_{\infty}(u)$ is a Haj{\l}asz gradient of $u$ for every $u \in W^{1,\infty}(Z)$.

We conclude from \Cref{prop:Hajlasz:to:thick} that $Z$ is $\infty$-thick $( C', R )$-quasiconvex for every $C' > C$ and $R > 0$, and, by \Cref{cor:thick-to-very-thick}, it follows that $Z$ is very $\infty$-thick $( C', R )$-quasiconvex for every $C' > C$ and $R > 0$. The equality $M^{1,\infty}(Z) = W^{1,\infty}(Z)$ and the $C'$-quasiconvexity of $Z$ follow, so (1) holds.

Under assumption (1), we have that $\LIP^{\infty}(Z) = W^{1,\infty}(Z)$ and $Z$ is $\widetilde{C}$-quasiconvex for some $\widetilde{C} \geq 1$. \Cref{thm:equivalence} implies that $Z$ is very $\infty$-thick $( C, R )$-quasiconvex for some $C\geq 1$ and $R > 0$. It follows that $Z$ is very $\infty$-thick $C\widetilde{C}$-quasiconvex by \Cref{lem:thick-to-verythick}.

The validity of the weak $\infty$-Poincaré inequality follows from the very $\infty$-thick quasiconvexity and the argument at the end of the proof of \Cref{thm:equivalence}.
\end{proof}

\begin{proof}[Proof of \Cref{cor:bounded:doubling:PI}]
It is clear that the weak $(1,\infty)$-Poincaré inequality implies the weak $(1,\infty)$-Poincaré inequality up to some scale. For the converse direction, we first use \Cref{thm:equivalence}, which yields $\text{LIP}^{\infty}(Z)=W^{1,\infty}(Z)$. Since, by assumption, the space $Z$ is quasiconvex, we may apply \Cref{thm:bounded:doubling} to conclude that $Z$ supports a weak $(1,\infty)$-Poincaré inequality.
\end{proof}

\section{Essential distance and infinity harmonic functions}\label{sec:essentialdistance:proofs}

\subsection{Essential distance}
The geometric property in \Cref{prop:sharpness}  characterizing the isometric equality $\LIP^{\infty}(Z)= W^{1,\infty}(Z)$, that is, to be very $\infty$-thick $C$-quasiconvexity for every $C>1$, is equivalent to the \emph{$\infty$-weak Fubini property} introduced in \cite[Definition 4.1 and Proposition 4.2]{Dur:Jar:Sha:19} (see also \cite{Juut:Shan:06}). Basic examples satisfying the assumptions of \Cref{prop:sharpness} include the Euclidean space $\mathbb{R}^n$ and (sub-)Riemannian spaces \cite{Drag:Manf:Vitt:13}; see \cite[Section 5]{Juut:Shan:06} for further discussion.

\Cref{thm:essentialdistance} below shows that examples of metric measure spaces satisfying \Cref{prop:sharpness} can readily be obtained from \Cref{thm:equivalence}. Indeed, we consider for every curve family $\Gamma$ in $Z$ the function
\begin{align*}
    d_{\Gamma}(x,y)
    \coloneqq
    \inf\left\{
        \ell( \gamma ) \colon \gamma(0) = x, \gamma(1) = y, \gamma \colon [0,1] \rightarrow Z \not\in \Gamma
    \right\}
    \quad\text{for $x,y \in Z$.}
\end{align*}
We use the convention $\inf(\emptyset)=+\infty$. Next, we define the \emph{essential distance}
\begin{align}\label{eq:essentialdistance}
    \widehat{d}( x, y )
    \coloneqq
    \sup\left\{
        d_{\Gamma}(x,y)
        \colon
        \text{ $\Gamma$ is $\infty$-negligible}
    \right\}
    \quad\text{for $x,y \in Z$.}
\end{align}
This is an equivalent adaptation of the distance of Durand-Cartagena, Jaramillo, and Shanmugalingam from \cite[Equation (9)]{Dur:Jar:Sha:19}. Note that while $\widehat{d}$ is nonnegative, symmetric, and satisfies the triangle inequality, $\widehat{d}$ can very well be infinite. This holds, for instance, for the Sierpi\'nski carpet since every curve family is $\infty$-negligible by the equality $W^{1,\infty}(Z) = L^{\infty}(Z)$. Nevertheless, it always holds that $d(x,y) \leq \widehat{d}(x,y)$. The reverse inequality is strongly connected to \Cref{thm:equivalence}.

\begin{theorem}\label{thm:essentialdistance}
Let $Z$ be a locally complete metric measure space with a reference measure that is infinitesimally doubling, and finite and positive on all balls, and that is very $\infty$-thick $(C,R)$-quasiconvex. Then every pair with $d(x,y) < R$ satisfies $\widehat{d}(x,y) \leq C d(x,y)$. Moreover, $\widehat{d}$ restricts to a finite length metric on each connected component $U$ of $Z$, the Sobolev spaces $W^{1,\infty}$ in every open subset $\Omega \subset U$ with respect to $d$ and $\widehat{d}$, respectively, are isometric, and the metric measure space $( U, \widehat{d}, \mu )$ is very $\infty$-thick $C$-quasiconvex for every $C>1$. Moreover, if $Z$ is complete and locally compact, then $( U, \widehat{d} )$ is a proper length space.
\end{theorem}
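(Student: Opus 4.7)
The proof has five claims: the local Lipschitz estimate below scale $R$, finiteness of $\widehat{d}$ on each connected component $U$, isometry of the Sobolev spaces together with the length-metric property, refined very $\infty$-thick $C$-quasiconvexity in the new metric for every $C>1$, and properness under the additional hypotheses. My plan is to adapt the essential-distance framework of Durand-Cartagena, Jaramillo, and Shanmugalingam \cite{Dur:Jar:Sha:19} to the present setting, using the very $\infty$-thick $(C,R)$-quasiconvexity hypothesis (equivalently, the $W^{1,\infty}$-Lipschitz structure from \Cref{thm:equivalence}) in place of their $\infty$-PI hypothesis.

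The estimate $\widehat{d}(x,y)\leq Cd(x,y)$ for $d(x,y)<R$ is immediate from the definitions: for any $\infty$-negligible $\Gamma$, the hypothesis produces a curve $\gamma$ from $x$ to $y$ with $\ell(\gamma)\leq Cd(x,y)$ and $\gamma\notin\Gamma$, so $d_\Gamma(x,y)\leq Cd(x,y)$, and taking the supremum over $\Gamma$ closes this step. For the finiteness of $\widehat{d}$ on a connected component $U$, I would fix $x_0\in U$ and show that $V=\{y\in U\colon \widehat{d}(x_0,y)<\infty\}$ is nonempty, open, and relatively closed in $U$; openness and closedness follow from the triangle inequality combined with the local estimate $\widehat{d}\leq Cd$, and connectedness forces $V=U$. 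Note also that each connected component of $Z$ is automatically closed in $Z$, so $U$ inherits completeness whenever $Z$ is complete.

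The Sobolev isometry and the length-metric property form the technical core. I would first establish the duality
\begin{equation*}
  \widehat{d}(x,y) = \sup\bigl\{|u(y)-u(x)|\colon u\in W^{1,\infty}(Z),\ \mathscr{E}_\infty(u)\leq 1\bigr\}.
\end{equation*}
The inequality $\geq$ follows from the $\infty$-weak upper gradient inequality applied along curves outside the exceptional negligible family of $u$, combined with the bound $\int_\gamma\rho\,ds\leq\mathscr{E}_\infty(u)\ell(\gamma)$. The reverse inequality uses, for each $\mu$-null Borel set $N$, the test function $u_N(z)=d_{\Gamma_N^+}(x,z)$, which admits $\rho\equiv 1$ as an $\infty$-weak upper gradient because $\Gamma_N^+$ is closed under sub-curves, together with the fact (recalled after \Cref{de:negligible}) that every $\infty$-negligible family is contained in some such $\Gamma_N^+$. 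The Sobolev isometry between $(\Omega,d)$ and $(\Omega,\widehat{d})$ for open $\Omega\subset U$ then follows because rectifiability and $\infty$-negligibility of curve families are $\mu$-intrinsic (depending only on $\mu$-null sets and arc-length parametrizations up to absolutely continuous reparametrization), and because the $d$- and $\widehat{d}$-arclengths coincide along $\infty$-almost every curve. The length-metric property of $\widehat{d}$ is then obtained from the duality: given $x,y$ and $\epsilon>0$, choose a null $N$ with $d_{\Gamma_N^+}(x,y)\geq\widehat{d}(x,y)-\epsilon$ and a curve avoiding $\Gamma_N^+$ whose $d$-length, hence $\widehat{d}$-length, is within $\epsilon$ of $\widehat{d}(x,y)$.

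The very $\infty$-thick $C$-quasiconvexity of $(U,\widehat{d},\mu)$ for every $C>1$ now follows: given negligible $\Gamma$ and $\epsilon>0$, pick a larger negligible $\Gamma^\star\supset\Gamma$ with $d_{\Gamma^\star}(x,y)\geq\widehat{d}(x,y)-\epsilon$, then a curve avoiding $\Gamma^\star$ of $d$-length, hence $\widehat{d}$-length, at most $(1+\epsilon)\widehat{d}(x,y)$. Finally, under completeness and local compactness of $Z$, the space $(U,\widehat{d})$ inherits completeness (Cauchy sequences in $\widehat{d}$ are Cauchy in $d$, hence $d$-convergent in the closed set $U$, and the local estimate upgrades this to $\widehat{d}$-convergence) and local compactness (by local bi-Lipschitz equivalence with $d$). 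Together with the length-metric property, Hopf--Rinow--Cohn-Vossen yields that $(U,\widehat{d})$ is a proper length space. The principal obstacle I anticipate is establishing the equality of $d$- and $\widehat{d}$-arclengths along $\infty$-almost every curve, which underlies both the Sobolev isometry and the sharp length-metric property; I would attempt this via the exhaustion of the essential supremum by a countable family of $\Gamma_{N_i}^+$ and the length-invariance of $d_{\Gamma_N^+}$ on curves avoiding $\Gamma_N^+$.
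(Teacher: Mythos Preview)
Your proposal is correct, and the core mechanism you identify at the end---exhaustion of the essential supremum by a countable family of $\Gamma_{N_i}^{+}$---is exactly what the paper uses. The difference is organizational rather than mathematical. The paper front-loads that step: it fixes a countable dense set $D\subset Z$, realizes $\widehat{d}(x,y)=d_{\Gamma_{x,y}}(x,y)$ for each pair by a countable-union argument, takes $\Gamma=\bigcup_{x,y\in D}\Gamma_{x,y}\subset\Gamma_B^{+}$ for a single null Borel $B$, and then upgrades $\widehat{d}=d_{\Gamma_B^{+}}$ from $D$ to all of $Z$ by density and the triangle inequality (using the already-established local estimate $\widehat{d}\leq Cd$). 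Once $\widehat{d}=d_{\Gamma_B^{+}}$ is in hand, the length-metric property is immediate, the arclength coincidence holds for every curve outside $\Gamma_B^{+}$, and the Sobolev isometry and very $\infty$-thick $C$-quasiconvexity for $C>1$ follow in a few lines.

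Your route via the duality formula $\widehat{d}(x,y)=\sup\{|u(y)-u(x)|:\mathscr{E}_\infty(u)\leq 1\}$ is valid (it is the \cite{Dur:Jar:Sha:19} viewpoint) but ultimately unnecessary here: the identity $\widehat{d}=d_{\Gamma_B^{+}}$ delivers everything the duality would, and more directly. As you yourself note, several of your intermediate claims (length property, quasiconvexity in the new metric) tacitly rely on the arclength coincidence that you only address at the end, so the paper's ordering avoids that circularity. One small technical point if you pursue the duality: your test function $u_N(z)=d_{\Gamma_N^{+}}(x,z)$ need not lie in $L^\infty(Z)$, so a truncation is required; measurability also needs a word (the paper invokes \cite{Ja:Ja:Ro:Ro:Sha:07} for the analogous function in \Cref{prop:Hajlasz:to:thick}).
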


\begin{proof}
It is immediate from very $\infty$-thick $(C,R)$-quasiconvexity that $d_{\Gamma}(x,y) \leq Cd(x,y)$ whenever $\Gamma$ is $\infty$-negligible and $d(x,y) < R$. The inequality $\widehat{d}(x,y) \leq C d(x,y)$ follows. For the rest of the proof, we adapt an argument from the proof of \cite[Proposition 4.4]{Dur:Jar:Sha:19}. Given $x, y \in Z$, we claim that there exists an $\infty$-negligible $\Gamma_{x,y}$ so that
\begin{align*}
    \widehat{d}(x,y) = d_{ \Gamma_{x,y} }(x,y).
\end{align*}
Indeed, if $\widehat{d}(x,y) = \infty$, for every $n\in \mathbb{N}$, there is an $\infty$-negligible $\Gamma_n$ so that $d_{ \Gamma_n }(x,y) \geq n$. Therefore $\Gamma_{x,y} \coloneqq \bigcup_{ n \geq 1 } \Gamma_{n}$ satisfies
\begin{align*}
    n \leq d_{ \Gamma_n }(x,y) \leq d_{ \Gamma_{x,y} }(x,y) \leq \widehat{d}(x,y),
\end{align*}
so the claim follows by passing to the limit $n \rightarrow \infty$. If $\widehat{d}(x,y) < \infty$, we instead find $\infty$-negligible $\Gamma_n$ so that $| d_{\Gamma_n}(x,y) - \widehat{d}(x,y) | < n^{-1}$ and defining $\Gamma_{x,y}$ as above yields that
\begin{align*}
    \widehat{d}(x,y) - \frac{1}{n} \leq d_{ \Gamma_n }(x,y) \leq  d_{ \Gamma_{x,y} }(x,y) \leq \widehat{d}(x,y),
\end{align*}
so the conclusion follows by passing to the limit $n \rightarrow \infty$.

Next, consider a countable and dense set $D \subset Z$. Consider the $\infty$-negligible family $\Gamma \coloneqq \bigcup_{ x,y \in D, x \neq y } \Gamma_{x,y}$. Recall that there exists a negligible Borel set $B \subset Z$ such that $\Gamma \subset \Gamma^{+}_{B}$. In particular, $\Gamma^{+}_{B}$ is $\infty$-negligible. It follows from the construction of $\Gamma$ that
\begin{align*}
    \widehat{d}(x,y) = d_{ \Gamma_{x,y} }(x,y) \leq d_{\Gamma^{+}_{B}}(x,y) \leq \widehat{d}(x,y)
    \quad\text{for $x, y \in D$, $x \neq y$,}
\end{align*}
so $\widehat{d}(x,y) = d_{ \Gamma^{+}_{B} }(x,y)$ for every $x, y \in D$. We extend this equality for every $x, y \in Z$.

Given $\epsilon > 0$, we find $x', y' \in D$ such that
\begin{equation}\label{eq:errorcontrol}
    \max\{ \widehat{d}(x,x'), \widehat{d}( y, y' ) \} < \frac{ \epsilon }{ 4 }.
\end{equation}
Inequality \eqref{eq:errorcontrol} and the triangle inequality for $\widehat{d}$ imply that
\begin{align*}
    \widehat{d}(x,y) \leq \frac{ \epsilon }{ 2 } + \widehat{d}(x',y')
    \quad\text{and}\quad
    \widehat{d}(x',y') \leq \frac{ \epsilon }{ 2 } + \widehat{d}(x,y);
\end{align*}
similar inequality holds for $d_{ \Gamma^{+}_{B} }$. Since $\widehat{d}(x',y') = d_{ \Gamma^{+}_{B} }(x',y')$, it follows that $\widehat{d}(x,y) < \infty$ if and only if $d_{ \Gamma^{+}_{B} }(x, y) < \infty$ if and only if $d_{ \Gamma^{+}_{B} }(x',y') < \infty$. Moreover, in case $\widehat{d}(x,y) < \infty$, it holds that
\begin{align*}
    | \widehat{d}(x,y) - d_{\Gamma^{+}_{B}}(x,y) | < \epsilon.
\end{align*}
Since $\epsilon > 0$ is arbitrary, we conclude that
$$\widehat{d}(x,y) = d_{\Gamma^{+}_{B}}(x,y)\quad\text{for every $x, y \in Z$}.$$
This equality implies that $\widehat{d}$ restricts to a finite length distance on the connected components of $Z$. Indeed, $\widehat{d}$ restricts to a finite distance that is bi-Lipschitz comparable to $d$ for every metric ball of radius $R$ with respect to $d$. Hence the topologies induced by $\widehat{d}$ and $d$ are the same. Then a standard connectivity argument implies that $\widehat{d}$ is finite in each connected component of $Z$.

Regarding the Sobolev spaces with respect to $d$ and $\widehat{d}$ being equal, observe that for every curve it holds that 
$$\ell_{d}( \gamma ) \leq \ell_{ \widehat{d} }( \gamma ) \leq C \ell_{ d }( \gamma ),$$ 
so the classes of rectifiable curves with respect to $d$ and $\widehat{d}$ coincide. Since $\widehat{d} = d_{ \Gamma^{+}_{B} }$, it also readily follows that if a rectifiable curve is not in $\Gamma^{+}_{B}$, then the length measures of the curve with respect to $d$ and $\widehat{d}$ coincide. In particular, for every $\gamma\notin \Gamma^{+}_{B}$ and every Borel function $\rho \colon Z \to [0,\infty]$, the values of the path integrals $\int_{\gamma}\rho\, ds$ with respect to $d$ and $\widehat{d}$, respectively, agree. Since $\Gamma^{+}_{B}$ is $\infty$-negligible, it follows that the classes of $\infty$-weak upper gradients coincide for the distances $d$ and $\widehat d$, and hence the Sobolev spaces are isometrically isomorphic in every domain $\Omega \subset Z$. This observation also implies that the restriction of $\widehat{d}$ to a connected component of $U$ is very $\infty$-thick $C$-quasiconvex for every $C > 1$.

It remains to verify that if $U \subset Z$ is a connected component and $Z$ is complete and locally compact, then $( U, \widehat{d} )$ is proper. To this end, since $( U, \widehat{d} )$ is a length space, it suffices to prove that $( U, \widehat{d} )$ is complete and locally compact by the Hopf--Rinow theorem, cf. \cite[Theorem 2.5.28]{Bu:Bu:Iv:01}. The completeness follows because $( U, d )$ is complete and the Cauchy sequences with respect to $d$ and $\widehat{d}$ coincide, while local compactness follows from the equivalence of the topologies induced by $d$ and $\widehat{d}$.
\end{proof}

\subsection{Infinity harmonic functions}\label{sec:infinitharmonicfunctions}
In this section, we prove \Cref{thm:infty-harmonic-solution}.
\begin{proof}[Proof of \Cref{thm:infty-harmonic-solution}]
During the proof, we refer to a length space which is very $\infty$-thick $C$-quasiconvex for every $C>1$ as having the $\infty$-weak Fubini property. This is equivalent to the definition used in \cite[Definition 4.1 and Proposition 4.2]{Dur:Jar:Sha:19}.

By applying \Cref{thm:essentialdistance}, the Sobolev space in a connected component $U \subset Z$ containing $\Omega$ isometrically coincides with the one defined by the essential distance $\widehat{d}$. Indeed, $W^{1,\infty}( V, \widehat{d}, \mu )=W^{1,\infty}(V,d,\mu) $ isometrically for every open set $V \subset U$. It also follows from \Cref{thm:essentialdistance} and the precompactness of $\Omega \subset Z$ that $\Omega$ is bounded in $( U, \widehat{d} )$. The claim follows from the strategy of proof of \cite[Theorem 1.2]{Dur:Jar:Sha:19} that establishes the equivalence of $\infty$-harmonicity in $( U, d, \mu )$ to being AMLE in $(U, \widehat{d} )$.

We first note that \cite[Theorem 1.2]{Dur:Jar:Sha:19} assumes that the measure $\mu$ is doubling. However, this is only used during the proof to establish the properness of $( U, \widehat{d}, \mu )$ which we have by \Cref{thm:essentialdistance}. Thus the proof needs only cosmetic changes. We sketch the main ideas. The authors use the basic properties of complete length spaces satisfying the $\infty$-weak Fubini property to prove that $\infty$-harmonic functions are equivalent to the so-called \emph{strong-AMLEs} by \cite[Lemma 4.6 and Remark 4.7]{Dur:Jar:Sha:19}. This argument does not need the doubling property or the properness. The equivalence of strong-AMLEs and AMLEs use the properness and the $\infty$-weak Fubini property, together with \cite[Propositions 4.1 and 5.8]{Juut:Shan:06} by Juutinen and Shanmugalingam. The first proposition holds in length spaces while the latter property uses the properness and a slightly stronger version of the $\infty$-weak Fubini property. However, as remarked in \cite[Proof of Theorem 4.10]{Dur:Jar:Sha:19}, the proof of \cite[Proposition 5.8]{Juut:Shan:06} goes through with the $\infty$-weak Fubini property. 

To obtain the existence and uniqueness, let $g \in \LIP^{\infty}(Z)$ and consider $g' = g|_{U}$. Then $g' \in \LIP^{\infty}(U,d) = W^{1,\infty}(U,d,\mu) = W^{1,\infty}( U, \widehat{d}, \mu ) = \LIP^{\infty}( U, \widehat{d} )$. By the results by Peres, Schramm, Sheffield and Wilson for all length spaces in \cite[Theorem 1.4]{Per:Schr:Sheff:Wil:09}, we obtain a unique AMLE extension $h$ of $g'|_{ U \setminus \Omega }$. In particular, by the comments above, $h$ is $\infty$-harmonic in $\Omega$. Since $\Omega$ is bounded with respect to $\widehat{d}$, it follows that $h \in \LIP^{\infty}(U, \widehat{d})¨$ and thus $h \in \LIP^{\infty}(U,d)$. If we extend $h$ to be equal to $g$ in $Z \setminus U$, we obtain $u \in \LIP^{\infty}(Z,d)$ that is $\infty$-harmonic in $\Omega$ coinciding with $g$ in $Z \setminus \Omega$. The uniqueness of $u|_{\Omega}$ follows from the AMLE uniqueness in \cite{Per:Schr:Sheff:Wil:09} and the $\infty$-harmonic and AMLE correspondence above.
\end{proof}

\subsection{Examples of infinity harmonicity and AMLEs}
The following example, based on \cite{Aron:Cran:Juu:04}, illustrates that even in simple cases, Lipschitz functions admit $\infty$-harmonic extensions which are not AMLE. This shows why the essential distance is important for the proof strategy of \Cref{thm:infty-harmonic-solution}.
\begin{example}\label{ex:necessityofrenorming}
{\em In case $Z = [0,1]\times\{0\} \cup \{0\} \times [0,1] \subset \mathbb{R}^2$ is equipped with the length measure and the Euclidean distance, a nonconstant map $h \colon \{ (1,0) \} \cup \{(0,1)\} \rightarrow \mathbb{R}$ does not have an AMLE extension, see e.g. the discussion after \cite[Theorem 1.4]{Per:Schr:Sheff:Wil:09}. However, since $Z$ is very $\infty$-thick $\sqrt{2}$-quasiconvex, $h$ admits an $\infty$-harmonic extension by \cite{Dur:Jar:Sha:19} or \Cref{thm:infty-harmonic-solution}. Notice that $Z$ is a $1$-PI space. See \cite[Example 4.8]{Dur:Jar:Sha:19} for a related example.}
\end{example}

The opposite can happen: the length distance in the Sierpi\'nski carpet is such that Lipschitz functions have a unique AMLE by \cite{Juut:02,Per:Schr:Sheff:Wil:09} but the $\infty$-harmonic extension problem is highly non-unique.
\begin{example}\label{ex:renormingnotenough}
{\em Suppose that $Z$ is the Sierpi\'nski carpet equipped with the standard self-similar measure and $\Omega \neq Z$ is a domain. Then, for a given Lipschitz map $g \colon Z \rightarrow \mathbb{R}$, any Lipschitz function $h \colon Z\to\mathbb R$ so that $g\equiv h$ on $Z\setminus\Omega$ acts as an $\infty$-harmonic extension of $g$ in $\Omega$. Indeed, since $W^{1,\infty}(Z) = L^{\infty}(Z)$, the Sobolev energy of any Lipschitz function in $\Omega$ is zero. Note that even if we consider the bi-Lipschitz equivalent length distance in $Z$, the same conclusion holds even though existence and uniqueness of AMLEs holds by \cite{Per:Schr:Sheff:Wil:09}.}
\end{example}

\section{Sobolev extension sets}\label{sec:extension}

The goal of this section is to prove \Cref{thm:Main_inf.ext.} and to discuss basic properties of Sobolev extension sets.
\begin{definition}\label{def:sob_ext_set}
Let $Z$ be a metric measure space and $1\leq p\leq \infty$. A subset $\Omega\subset Z$ is a $W^{1,p}$-extension set, for some $1\leq p\leq \infty$, if there exists an extension operator (not necessarily linear) $E\colon W^{1,p}(\Omega)\to W^{1,p}(Z)$ so that $Eu|_{\Omega}=u$ and $\|Eu\|_{W^{1,p}(Z)}\leq C\|u\|_{W^{1,p}(\Omega)}$ for every $u\in W^{1,p}(\Omega)$ for a constant $C\geq 1$ independent of $u$.
\end{definition}
There exists a vast literature treating the topic of Sobolev extension sets (both in the classical Euclidean setting and metric measure setting) and we refer the interested reader to \cite{HKT2008:B,HKT2008,Garc:Iko:Zhu:23} and references therein for further reading on the topic.

The measure density condition is a measurability condition that plays an important role in the theory Sobolev extension sets. In particular, if $1\leq p<\infty$, every $W^{1,p}$-extension domain $\Omega \subset \mathbb{R}^{n}$ satisfies the measure density condition (see \cite{HKT2008}).

\begin{definition}
    Given a metric measure space $Z$ we say that a measurable subset $\Omega\subset Z$ satisfies the measure density condition if there exists $C_{\mu}>0$ so that for every $x\in\overline{\Omega}$,
    $$\mu (B(x,r))\leq C_\mu \mu(\Omega\cap B(x,r)),\quad \text{for every } r\in (0,\min\{\diam(\Omega),1\}) .$$
\end{definition}

If $p=\infty$, there exist $W^{1,\infty}$-extension sets that do not satisfy a measure density condition. The following is a simple example.
\begin{example}\label{ex:1}
{\em Let $C\subset [0,1]$ be a fat Cantor set with positive measure. Almost every point of $C$ is of density $1$ on $C$, so $[0,1]\setminus C$, whose closure is the whole interval $[0,1]$, cannot satisfy the measure density condition. Now consider $\Omega =\mathbb{R}^n\setminus C^n$. By using \cite[Theorem A]{HH2008}, it follows that $\Omega$ is quasiconvex. Consequently, by using \Cref{prop:HKT2008}, we have that $\Omega$ is a $W^{1,\infty}$-extension domain. Moreover, by Fubini's theorem, we have that $\mathcal{L}^{n}(C^n) > 0$ and since $C^n=\partial\Omega$, the domain $\Omega$ fails the measure-density condition.}
\end{example}
There are examples of $W^{1,\infty}$-extension domains for which the restriction of the Lebesgue measure is not doubling.
\begin{example}\label{ex:2}
{\em The outward cusp in \Cref{carpet:outwrd_cusp_2D} is defined as
    \begin{equation}\label{eq:outward_cusp}
\Omega_{t^2}=B\left((3,0),\sqrt{5}\right)\cup\{(x,y)\in\R^2\colon  x\in (0,1), |y|<x^2\} .\end{equation}

\begin{figure}
\begin{center}
\begin{tikzpicture}
\draw[line width=0.8 pt, scale=0.8, domain=0:1, smooth, variable=\x, black!100] plot ({\x}, {\x*\x});
\draw[line width=0.8 pt, scale=0.8, domain=0:1, smooth, variable=\x, black!100] plot ({\x}, {-\x*\x});
\draw [line width=0.8 pt, scale=0.8, black!100] (5.236,0) arc [start angle=0, end angle=-154, radius=2.236cm];
\draw [line width=0.8 pt, scale=0.8, black!100] (5.236,0) arc [start angle=0, end angle=154, radius=2.236cm];
\end{tikzpicture}
\end{center}
\caption{Outward cusp $\Omega_{t^2}$}
   \label{carpet:outwrd_cusp_2D}
\end{figure}
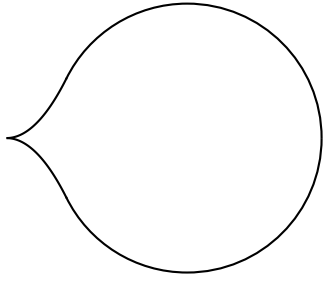

The set $\Omega_{t^2}$ is a $W^{1,\infty}-$extension domain according to \Cref{prop:HKT2008}. It is not difficult to prove that the measure density condition fails at the cusp $(0,0)$, yet the restriction of the Lebesgue measure $\mathcal L^n$ to $\Omega$ is doubling.

By replacing $t^2\colon (0, 1]\to(0, \infty)$ with a left-continuous and increasing function $\psi\colon (0, 1]\to(0, \infty)$ with $\psi(1)=1$, we obtain a cusp domain
\begin{equation*}
\Omega_{\psi}= B\left((3,0),\sqrt{5}\right) \cup \{(x,y)\in\R^2\colon x\in (0,1), |y|<\psi(x)\}.
\end{equation*}
similar to $\Omega_{t^2}$. If we choose such $\psi$ with 
\begin{equation}\label{eq:nondoubling}
\lim_{t\to 0^+}\frac{\psi(2t)}{\psi(t)}=\infty,
\end{equation}
then the Lebesgue measure restricted to $\Omega_{\psi}$ is not doubling up to any scale. Since it is still quasiconvex, it is a $W^{1, \infty}$-extension domain.}
\end{example}
We note that both \Cref{ex:1,ex:2} satisfy the weak $(1,\infty)$-Poincaré inequality by \Cref{thm:equivalence}, \Cref{thm:Main_inf.ext.}, and \Cref{cor:bounded:doubling:PI}. \Cref{ex:2} provides a further example of a metric measure space satisfying the assumptions of \Cref{thm:equivalence} but whose measure is not doubling and thus not being an $\infty$-PI space.

We are ready to prove \Cref{thm:Main_inf.ext.}.
\begin{proof}[Proof of \Cref{thm:Main_inf.ext.}]
For the equivalence between $(2)$ and $(3)$, we just need to apply \Cref{lem:thick-to-verythick}. For the equivalence $(1)\Leftrightarrow (2)$, we can relax the assumptions. Indeed, we only need the weaker assumptions that $\Omega\subset Z$ is locally complete and infinitesimally doubling while we may relax the assumptions of $Z$ to the equality $\LIP^{\infty}(Z) = W^{1,\infty}(Z)$ without assuming infinitesimally doubling or very thick quasiconvexity.

$(2)\Rightarrow (1)$: We let $T\colon\LIP(\Omega)\to\LIP(Z)$ be the well-known McShane extension operator:
$$ T u(x)= \inf\{ u(y)+\lip(u) d(x,y)\colon y\in\Omega\}  \quad  x\in Z,\; u\in \LIP(\Omega).$$
The extension satisfies $\lip(Tu)=\lip(u)$ for every $u\in \LIP(\Omega)$. Next, we denote $Eu(x) = \max\left\{ -\|u\|_{L^{\infty}(\Omega)}, \min\left\{ \|u\|_{L^{\infty}(\Omega)}, Tu(x) \right\} \right\}$ for $x \in Z$. Then $$\| Eu \|_{L^{\infty}(Z)} = \|u\|_{L^{\infty}(Z)} \quad \text{and} \quad \lip(Eu) = \lip(u).$$ The map $u \mapsto Eu$ is a norm-preserving extension operator from $\LIP^{\infty}(\Omega)$ into $\LIP^{\infty}(Z)$. By applying \Cref{thm:equivalence} to $\Omega$, we have that $\LIP^{\infty}(\Omega) = W^{1,\infty}(\Omega)$ with equivalent norms. Since also $\LIP^{\infty}(Z) = W^{1,\infty}(Z)$ with equivalent norms, here $E$ defines a bounded extension operator from $W^{1,\infty}(\Omega)$ into $W^{1,\infty}(Z)$.

$(1)\Rightarrow (2)$: Given $u \in W^{1,\infty}(\Omega)$, we know that there exists $Eu \in W^{1,\infty}(Z)$ extending $u$, and $Eu$ has a representative in $\LIP^{\infty}(Z)$. The restriction of the $\LIP^{\infty}(Z)$ representative to $\Omega$ is a representative of $u$ in $\widehat{N}^{1,\infty}(\Omega)$. Thus $u$ has a representative in $\LIP^{\infty}(\Omega)$. It follows that $\LIP^{\infty}(\Omega) = W^{1,\infty}(\Omega)$, so applying \Cref{thm:equivalence} concludes the proof.
\end{proof}

\begin{remark}\label{rem:linear_ext}
Under mild assumptions, there exists a linear extension operator $E\colon W^{1,\infty}(\Omega)\to W^{1,\infty}(Z)$ in \Cref{thm:Main_inf.ext.}. For example, if one assumes that $Z$ is metrically doubling. In this case, instead of using the (nonlinear) McShane extension theorem one, there exists a (linear) Whitney-type extension operator for Lipschitz and bounded functions (see for instance \cite[Theorem 4.1.21]{HKST2015}). We extend this fact to a sharper condition, that of finite Nagata dimension, in the subsequent section. The connection between Nagata dimension and Lipschitz extensions were considered in \cite{La:Sch:05}.
\end{remark}

\section{Banach-valued Sobolev functions}\label{sec:banach}
In this section, we prove \Cref{prop:Banach-valued_version} and provide the necessary background results.

For a metric measure space $Z$ and a Banach space $\mathbb V=(\mathbb V, |\cdot|)$, the definition of the Lipschitz spaces $\LIP^{\infty}(Z;\mathbb V)$, Haj{\l}asz spaces $M^{1,\infty}(Z;\mathbb V)$ and the Sobolev spaces $W^{1,\infty}(Z;\mathbb V)$ is completely analogous to the one given in \Cref{sec:prelim}, but now for Bochner measurable functions $u\colon Z\to\mathbb V$. In this case the left-hand sides of \eqref{eq:Lips_cond.}, \eqref{eq:Hineq} and \eqref{eq:boundaryinequality} should be understood with the Banach norm $|\cdot|$. Similarly $\mathbb V$-valued $W^{1,\infty}$ extension sets $\Omega\subset Z$ are those ones for which there exists an extension operator $E\colon W^{1,\infty}(\Omega;\mathbb V)\to W^{1,\infty}(Z;\mathbb V)$.

We refer to \cite{HKST2001,HKST2015} for a more detailed explanation of Banach-valued Sobolev functions and its basic properties; see also \cite{Garc:Iko:Zhu:23}. The following lemma states the basic results we need for the purposes of this section. In what follows, the dual space of $\mathbb{V}$ is denoted by $\mathbb{V}^{*}$. That is, $\mathbb{V}^{*}$ is the collection of all continuous linear maps $w \colon \mathbb{V} \rightarrow \mathbb{R}$, where the norm is defined by $|w| \coloneqq \sup_{ |v| \leq 1 } w(v)$.

\begin{lemma}\label{lem:Banach_valued}
Let $Z$ be a metric measure space and let $\mathbb V$ be a Banach space. 
\begin{enumerate}
    \item The inclusion from $\LIP^{\infty}(Z; \mathbb{V} )$ into $M^{1,\infty}(Z;\mathbb V)$ is surjective and a $2$-isomorphism.
    \item For a measurable $u\colon Z \to \mathbb V$, a Borel function $g\colon Z \to [0,\infty]$ is a Haj{\l}asz gradient of $u$ if and only if $g$ is a Haj{\l}asz gradient of $w(u)\colon Z\to \R$ for every $w\in\mathbb V^*$, $|w|\leq 1$.
    \item For a measurable $u\colon Z\to \mathbb V$, a Borel function $\rho \colon Z \to [0,\infty]$ is an $\infty$-weak upper gradient for some representative of $u$ if and only if for every $w\in\mathbb V^*$, $|w|\leq 1$, $\rho$ is an $\infty$-weak upper gradient for some representative of $w(u)$.
    \item The inclusion from $M^{1,\infty}(Z; \mathbb{V} )$ into $W^{1,\infty}(Z; \mathbb{V} )$ is $2$-bi-Lipschitz.
\end{enumerate}
\end{lemma}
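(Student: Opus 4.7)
The plan is to derive \Cref{lem:Banach_valued} by mirroring the real-valued proofs of \Cref{lemm:Lip:Haj} and \Cref{lemma:M:in:W} and by using a Hahn--Banach duality to characterize Banach-valued Haj\l{}asz gradients and $\infty$-weak upper gradients via scalar-valued ones. Since every Bochner measurable $u \colon Z \to \mathbb V$ is essentially separably valued, I may assume $\mathbb V$ is separable. Then, starting from a dense sequence in $\mathbb V$, Hahn--Banach produces a countable norming family $\{w_n\}_{n \in \mathbb N} \subset \mathbb V^{\ast}$ with $|w_n| \leq 1$ and $|v| = \sup_n w_n(v)$ for every $v \in \mathbb V$; this is the tool I use for parts (2) and (3).

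For part (1), I would follow the proof of \Cref{lemm:Lip:Haj} verbatim with $\mathbb R$ replaced by $\mathbb V$: if $u \in \LIP^{\infty}(Z;\mathbb V)$ then $2^{-1}\lip(u) \in \mathcal D_{\infty}(u)$ directly from the definition; conversely, if $u \in M^{1,\infty}(Z;\mathbb V)$ has $g \in \mathcal D_{\infty}(u)$, then a Bochner measurable representative is $2\|g\|_{L^{\infty}(Z)}$-Lipschitz off a negligible set $N$, and the completeness of $\mathbb V$ together with the density of $Z \setminus N$ yields a unique Lipschitz extension on all of $Z$. Part (4) then follows exactly as in \Cref{lemma:M:in:W}(2): applying (1) produces a Lipschitz representative $\widehat u$ of $u \in M^{1,\infty}(Z;\mathbb V)$ with $\lip(\widehat u) \leq 2\|u\|_{M^{1,\infty}(Z;\mathbb V)}$, and the constant function $\lip(\widehat u)$ is an $\infty$-weak upper gradient of $\widehat u$, which gives the $2$-Lipschitz linear injection into $W^{1,\infty}(Z;\mathbb V)$.

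Parts (2) and (3) exploit the norming family. The forward implications in both are immediate from $|w(u(x) - u(y))| \leq |u(x) - u(y)|$ for $|w| \leq 1$; for (3) this additionally uses that the family of rectifiable curves meeting any fixed negligible Borel set is $\infty$-negligible by \Cref{de:negligible}, so modifications of a representative on a negligible set do not affect the $\infty$-weak upper gradient property. For the converse of (2), the exceptional sets $N_n$ attached to each $w_n$ unite to a negligible $N = \bigcup_n N_n$, and for $x, y \in Z \setminus N$ we have $w_n(u(x) - u(y)) \leq d(x,y)(g(x) + g(y))$ for every $n$; taking supremum over $n$ recovers $|u(x) - u(y)| \leq d(x,y)(g(x) + g(y))$. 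For the converse of (3), I run the same scheme at the level of curves, using the key observation that a countable union of $\infty$-negligible curve families is still $\infty$-negligible (each sits inside some $\Gamma^{+}_{N_n}$ for a negligible Borel $N_n$, so the union sits in $\Gamma^{+}_{\bigcup_n N_n}$, which is $\infty$-negligible by \Cref{de:negligible}).

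The main obstacle will be coordinating representatives in part (3): the hypothesis only furnishes, for each $w$, an $\infty$-weak upper gradient for \emph{some} representative of $w(u)$, whereas the conclusion asks for a single representative of $u$. I would handle this by fixing one Bochner measurable representative $u_0$ of $u$ at the outset and observing that, since the scalar-valued $w_n \circ u_0$ differs from the \emph{good} representative of $w_n(u)$ only on a negligible set, the associated $\Gamma^{+}$-family is $\infty$-negligible; enlarging the exceptional curve family for each $w_n$ accordingly then allows the countable-norming argument to return a single $\infty$-weak upper gradient for $u_0$ itself.
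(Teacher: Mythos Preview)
Your treatment of parts (1), (2), and (4) is correct and matches the paper's approach: the paper simply records that (1) and (4) follow the real-valued \Cref{lemm:Lip:Haj} and \Cref{lemma:M:in:W}, and for (2) it cites \cite[Lemma 3.3]{Garc:Iko:Zhu:23}, whose content is exactly your countable norming-family argument.

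For the converse in (3), however, your proposed fix for the representative issue does not close the gap. You correctly observe that if $h_n$ is the good representative of $w_n(u)$ and $N_n = \{ h_n \neq w_n \circ u_0 \}$ is negligible, then $\Gamma^{+}_{N_n}$ is $\infty$-negligible. But discarding $\Gamma^{+}_{N_n}$ only rules out curves that spend \emph{positive length} in $N_n$; a curve $\gamma$ can satisfy $\int_\gamma \chi_{N_n}\,ds = 0$ and still have an endpoint in $N_n$, and at such an endpoint $h_n$ and $w_n \circ u_0$ disagree, so the upper-gradient inequality for $u_0$ along $\gamma$ does not follow from the one for $h_n$. The endpoint family $\{\gamma : \gamma(0) \in N \text{ or } \gamma(1) \in N\}$ is \emph{not} in general $\infty$-negligible when $N$ is merely $\mu$-negligible (e.g.\ in $\mathbb{R}^n$ take $N$ to be a single point and consider all segments emanating from it; no single negligible $B$ makes every such segment lie in $\Gamma^{+}_{B}$). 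This is precisely the distinction between $\mu$-negligible and $\infty$-capacity-zero sets that forces the passage from $W^{1,\infty}$ to $N^{1,\infty}$ in the first place. The paper does not give a self-contained argument here but defers to \cite[Lemma 3.9]{Garc:Iko:Zhu:23}; a complete proof has to \emph{construct} an $\widehat N^{1,\infty}$-representative of $u$ rather than work with an arbitrary Bochner-measurable $u_0$, for instance by a capacity-zero modification or by redefining $u_0$ along curves using the absolute continuity of the $h_n \circ \gamma$.
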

\begin{proof}
The proof of (1) is similar to the proof of \Cref{lemm:Lip:Haj}. For (2), see e.g. \cite[Lemma 3.3]{Garc:Iko:Zhu:23}. For (3), see e.g. \cite[Lemma 3.9]{Garc:Iko:Zhu:23}. Claim (4) follows similarly to \Cref{lemma:M:in:W}.
\end{proof}

\begin{proposition}\label{prop:M=W_Banach-valued}
Let $Z$ be a metric measure space and let $\mathbb V$ be any Banach space. Then  $M^{1,\infty}(Z)=W^{1,\infty}(Z)$ if and only if  $M^{1,\infty}(Z;\mathbb V)=W^{1,\infty}(Z;\mathbb V)$.
\end{proposition}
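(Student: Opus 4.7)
The plan is to leverage \Cref{lem:Banach_valued}, which reduces both Hajłasz and $\infty$-weak upper gradients for $\mathbb{V}$-valued maps to the scalar compositions $w \circ u$ for $w \in \mathbb{V}^{*}$ with $|w| \leq 1$. Throughout I may assume $\mathbb{V}$ is nontrivial, since otherwise both spaces on each side reduce to $\{0\}$.

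For the direction $(\Leftarrow)$, I fix a unit vector $v_0 \in \mathbb{V}$ and, given $u \in W^{1,\infty}(Z)$, consider the isometric lift $\tilde{u}(x) = u(x)\, v_0$. Any $\infty$-weak upper gradient of $u$ serves as one for $\tilde{u}$, so $\tilde{u} \in W^{1,\infty}(Z;\mathbb{V}) = M^{1,\infty}(Z;\mathbb{V})$ by hypothesis. Letting $g$ be a Hajłasz gradient of $\tilde{u}$ and applying Hahn--Banach to pick $w \in \mathbb{V}^{*}$ with $w(v_0) = 1 = |w|$, I have $w(\tilde{u}) = u$, so \Cref{lem:Banach_valued}(2) yields that $g$ is a Hajłasz gradient of $u$, and hence $u \in M^{1,\infty}(Z)$.

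For the direction $(\Rightarrow)$, the task is to manufacture a single Hajłasz gradient for $u \in W^{1,\infty}(Z;\mathbb{V})$ out of the scalar ones available for each projection. Since $M^{1,\infty}(Z) = W^{1,\infty}(Z)$ and the inclusion in \Cref{lemma:M:in:W} is a bounded bijection between Banach spaces, the open mapping theorem supplies a constant $K \geq 1$ with $\|v\|_{M^{1,\infty}(Z)} \leq K\|v\|_{W^{1,\infty}(Z)}$ for every $v$. Fixing an $\infty$-weak upper gradient $\rho$ of $u$, \Cref{lem:Banach_valued}(3) ensures $w(u) \in W^{1,\infty}(Z)$ with $\rho$ still serving as an $\infty$-weak upper gradient. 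Consequently, each $w(u)$ admits a Hajłasz gradient $g_w$ with $\|g_w\|_{L^{\infty}(Z)} \leq M$, where $M \coloneqq K(\|u\|_{L^{\infty}(Z;\mathbb{V})} + \|\rho\|_{L^{\infty}(Z)})$.

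The subtle step, which I expect to be the main obstacle, is passing from these $w$-dependent gradients to a common one, since a priori both the gradient and its exceptional set depend on $w$. The resolution is to observe that after redefining $g_w$ on its $w$-dependent null set $\{g_w > M\}$, the pointwise bound $g_w \leq M$ holds everywhere, and hence the constant function $M$ itself dominates $g_w$ and is therefore a Hajłasz gradient of $w(u)$, for every admissible $w$ simultaneously. \Cref{lem:Banach_valued}(2) then upgrades this to the conclusion that the constant $M$ is a Hajłasz gradient of $u$, so $u \in M^{1,\infty}(Z;\mathbb{V})$. Combined with the embedding in \Cref{lem:Banach_valued}(4), this gives $M^{1,\infty}(Z;\mathbb{V}) = W^{1,\infty}(Z;\mathbb{V})$.
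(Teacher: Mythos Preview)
Your proof is correct and follows essentially the same approach as the paper's: both directions use the isometric embedding $t \mapsto t v_0$ and the projection $w$ for the easy direction, and for the substantive direction both exploit the uniform norm bound (from the open mapping theorem) to conclude that a single constant function serves as a common Haj\l asz gradient of all projections $w(u)$, then apply \Cref{lem:Banach_valued}(2). The only cosmetic difference is that the paper writes the constant as $C\|u\|_{W^{1,\infty}(Z;\mathbb V)}+1$ while you write $K(\|u\|_{L^\infty}+\|\rho\|_{L^\infty})$, and you make the appeal to the open mapping theorem explicit.
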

\begin{proof}
Assume first that $M^{1,\infty}(Z)=W^{1,\infty}(Z)$. Thanks to \Cref{lem:Banach_valued} (4), we only need to consider $u\in W^{1,\infty}(Z;\mathbb V)$ and show that $u\in M^{1,\infty}(Z;\mathbb V)$. For any $w\in \mathbb V^*$ with norm one, we have that $w(u)\in W^{1,\infty}(Z)$ and, by our assumption, we have that $w(u)\in M^{1,\infty}(Z)$ and that there exists a constant $C>0$ (independent of $u$ and $w$) so that
$$\|w(u)\|_{M^{1,\infty}(Z)}\leq C\|w(u)\|_{W^{1,\infty}(Z)} \leq C\|u\|_{W^{1,\infty}(Z;\mathbb{V})} .$$
In particular, the constant function $z \mapsto C\|u\|_{W^{1,\infty}(Z;\mathbb{V})}+1$ acts as a Haj{\l}asz gradient of $w(u)$. It only remains to use \Cref{lem:Banach_valued} (2) to conclude that $C\|u\|_{W^{1,\infty}(Z;\mathbb V)}+1$ is a Haj{\l}asz gradient of $u$, and we are done.

Assume next that $M^{1,\infty}(Z;\mathbb V)=W^{1,\infty}(Z;\mathbb V)$ as sets with comparable norms. Consider unit norm $v \in \mathbb{V}$ and $w \in \mathbb{V}^{*}$ such that $w(v) = 1$. Then $i_v \colon \mathbb{R} \rightarrow \mathbb{V}$, $t \mapsto t \cdot v$, defines an isometric embedding and $p_w \colon \mathbb{V} \rightarrow \mathbb{R}$, $u \mapsto w(u)$, a linear $1$-Lipschitz map. It follows from \Cref{lem:Banach_valued} that these induce an isometric embedding $I_v \colon W^{1,\infty}(Z) \rightarrow W^{1,\infty}(Z; \mathbb{V})$ 
and a $1$-Lipschitz linear map $P_w \colon M^{1,\infty}(Z; \mathbb{V}) \rightarrow M^{1,\infty}(Z)$ by pointwise action. That is, $I_v(u)(x)\coloneqq i_v(u(x))=(u(x))v$ and $P_{w}(u)(x)\coloneqq p_w(u(x))=w(u(x))$ for every $x\in Z$. Now if $u \in W^{1,\infty}(Z)$, then $I_v(u) \in W^{1,\infty}(Z; \mathbb{V}) = M^{1,\infty}(Z; \mathbb{V})$, so $u = P_w( I_v(u) ) \in M^{1,\infty}(Z)$. The proof is complete.
\end{proof}
The above proof shows, in particular, that if $M^{1,\infty}(Z)=W^{1,\infty}(Z)$, then $M^{1,\infty}(Z;\mathbb V)=W^{1,\infty}(Z;\mathbb V)$ and
$$\frac{1}{4}\| u  \|_{W^{1,\infty}(Z;\mathbb V)} \leq \|u\|_{M^{1,\infty}(Z;\mathbb V)}\leq C\| u\|_{W^{1,\infty}(Z;\mathbb V)}\quad \text{for every } u\in W^{1,\infty}(Z;\mathbb V)$$
for a constant $C>0$ independent of the Banach spaces $\mathbb V$. It is not clear if the corresponding statement holds when $p \in [1,\infty)$; see \cite[Theorem 1.6]{Garc:Iko:Zhu:23} for a partial result to this effect.

Next we recall the definition of the Nagata dimension (see \cite{La:Sch:05} for further background).
\begin{definition}\label{def:Nag_dim}
    Let $Z$ be a metric space. The Nagata dimension $\dim_N Z$ of $Z$ is the infimum of all integers $n\geq 0$ for which there
exists a constant $c>0$ such that for all $s>0$, there exists a covering $Z=\bigcup_{i\in \mathcal I}D_i$ satisfying $\diam(D_i)\leq cs$ for all $i\in\mathcal I$ and so that for every $D\subset Z$ with $\diam(D)\leq s$, it holds that $\#\{ i\in\mathcal I\colon  D\cap D_i\neq \emptyset \}\leq n+1$.
\end{definition}
\begin{proposition}\label{prop:real-to-Banach}
Let $Z$ be a metric measure space. Let $\Omega \subset Z$ be a complete set with $\LIP^{\infty}(\Omega) = W^{1,\infty}(\Omega)$ such that $\Omega$ or $Z \setminus \Omega$ has finite Nagata dimension. Then there exists a bounded and linear extension operator $E_{ \mathbb{V} } \colon W^{1,\infty}( \Omega; \mathbb{V} ) \rightarrow \LIP^{\infty}(Z; \mathbb{V} )$ for every Banach space $\mathbb{V}$. 
\end{proposition}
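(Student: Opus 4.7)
The first step is to reduce the problem to constructing a bounded linear Lipschitz extension operator $T \colon \LIP^{\infty}(\Omega; \mathbb{V}) \rightarrow \LIP^{\infty}(Z; \mathbb{V})$ whose operator norm is uniform in $\mathbb{V}$. Indeed, the hypothesis $\LIP^{\infty}(\Omega) = W^{1,\infty}(\Omega)$ combined with \Cref{lem:Banach_valued} and \Cref{prop:M=W_Banach-valued} yields that $\LIP^{\infty}(\Omega; \mathbb{V})$, $M^{1,\infty}(\Omega; \mathbb{V})$, and $W^{1,\infty}(\Omega; \mathbb{V})$ coincide as sets with pairwise comparable norms, the comparison constants being independent of $\mathbb{V}$. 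Together with the continuous inclusion $\LIP^{\infty}(Z; \mathbb{V}) \hookrightarrow W^{1,\infty}(Z; \mathbb{V})$, such $T$ yields the desired $E_{\mathbb{V}}$.

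To construct $T$, I would implement the Whitney-type extension scheme indicated in \Cref{rem:linear_ext}. Since $\Omega$ is complete and hence closed in $Z$, the set $U := Z \setminus \Omega$ is open. Working first in the case that $U$ has finite Nagata dimension, \cite[Section 5]{La:Sch:05} provides a countable cover $\{B_i = B(x_i, r_i)\}_{i \in \mathcal{I}}$ of $U$ with $r_i$ comparable to $\dist(x_i, \Omega)$, with the dilates $\{2 B_i\}$ having uniformly bounded overlap, and a subordinate Lipschitz partition of unity $\{\varphi_i\}$ satisfying $\lip(\varphi_i) \lesssim r_i^{-1}$. For each $i$, pick an anchor $y_i \in \Omega$ with $d(x_i, y_i) \leq 2 \dist(x_i, \Omega)$, and define
\begin{equation*}
    T f(x) = \begin{cases} f(x), & x \in \Omega, \\ \displaystyle\sum_{i \in \mathcal{I}} \varphi_i(x) f(y_i), & x \in U. \end{cases}
\end{equation*}
Linearity in $f$ is immediate, and convexity of the norm on $\mathbb{V}$ together with $\sum_i \varphi_i \equiv 1$ on $U$ gives $\|Tf\|_{L^{\infty}(Z; \mathbb{V})} \leq \|f\|_{L^{\infty}(\Omega; \mathbb{V})}$.

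The main work is the bound $\lip(Tf) \lesssim \lip(f)$. The case $x, y \in \Omega$ is trivial. For $x, x' \in U$ with $x \in 2 B_i$, the cancellation $\sum_j(\varphi_j(x) - \varphi_j(x')) = 0$ allows writing
\begin{equation*}
    Tf(x) - Tf(x') = \sum_{j}(\varphi_j(x) - \varphi_j(x'))(f(y_j) - f(y_i)),
\end{equation*}
after which the estimates $d(y_j, y_i) \lesssim r_i$ on overlapping indices, the gradient bounds on $\varphi_j$, and the bounded overlap yield the Lipschitz estimate. For $x \in U \cap 2 B_i$ and $y \in \Omega$, one writes $Tf(x) - f(y) = \sum_j \varphi_j(x)(f(y_j) - f(y))$ and bounds $d(y_j, y) \lesssim \dist(x, \Omega) + d(x, y) \lesssim d(x, y)$ on indices with $\varphi_j(x) > 0$.

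The main technical obstacle I foresee is the alternative case where $\Omega$, rather than $U$, carries the Nagata dimension hypothesis, since the Whitney construction is natively performed on the complement. One resolution is to construct the Whitney cover of $U$ by standard means (separability of $Z$ suffices for existence of the centers and radii with $r_i \asymp \dist(x_i, \Omega)$) and then derive the requisite bounded multiplicity of $\{2 B_i\}$ by transferring a Nagata cover of $\Omega$ at scale $s$ to the anchor set $\{y_i \colon r_i \asymp s\}$, exploiting the comparability $d(x_i, y_i) \asymp r_i$ to pass the overlap bound from $\Omega$ to $U$. Verifying that this transfer preserves bounded multiplicity with constants depending only on the Nagata data of $\Omega$ is the most delicate point; once it is established, the construction and estimates above proceed verbatim.
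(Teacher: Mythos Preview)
Your proposal is correct and follows essentially the same strategy as the paper: reduce to a linear Lipschitz extension via \Cref{prop:M=W_Banach-valued} and \Cref{lem:Banach_valued}, then build a Whitney-type operator on $Z\setminus\Omega$ following Lang--Schlichenmaier. The reduction step and the estimates you sketch for the partition-of-unity extension are standard and match the paper's argument.

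The one place where the paper is cleaner concerns the case you flag as delicate, namely when $\Omega$ rather than $Z\setminus\Omega$ carries the Nagata-dimension hypothesis. You propose constructing a Whitney cover of $U=Z\setminus\Omega$ by generic means and then transferring the bounded multiplicity from a Nagata cover of $\Omega$ via the anchor points. This can be made to work, but it is unnecessary: Lang--Schlichenmaier handle precisely this situation in \cite[Proof of Theorem~1.6]{La:Sch:05}, producing directly a Whitney-type cover $(B_i)_{i\in I}$ of $Z\setminus\Omega$ with $\diam B_i \leq \alpha\, d(B_i,\Omega)$ and the bounded-overlap property (any set $D\subset Z\setminus\Omega$ with $\diam D \leq \beta\, d(D,\Omega)$ meets at most $n+1$ members) where $n-1$ is the Nagata dimension of $\Omega$. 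The paper simply invokes this, so both cases yield the same covering properties and the remainder of the construction proceeds uniformly. Citing that result would remove the speculative part of your argument.
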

\begin{proof}
By \Cref{prop:M=W_Banach-valued}, we have $M^{1,\infty}(\Omega;\mathbb V)=W^{1,\infty}(\Omega;\mathbb V)$ and also by \Cref{lem:Banach_valued} (1), we have $\LIP^{\infty}( U; \mathbb{V} ) = M^{1,\infty}( U; \mathbb{V} )$ where $U \in \{ \Omega, Z \}$. By uniform continuity, every $u \in \LIP^{\infty}(\Omega; \mathbb{V})$ admits a unique extension to $\overline{u} \in \LIP^{\infty}( \overline{\Omega}; \mathbb{V} )$ with equal norm. Thus it is enough to build a linear extension operator
\begin{equation}\label{eq:EXT-LIP-BANACH}
E_{\mathbb V}\colon \LIP^{\infty}(\overline{\Omega};\mathbb V)\to \LIP^{\infty}(Z;\mathbb V).
\end{equation}
The claim is obvious if $\overline{\Omega} = Z$, so we assume that $Z \setminus \overline{\Omega} \neq \emptyset$.

The existence of $E_{\mathbb V}$ follows from the work of Lang and Schlichenmaier \cite{La:Sch:05}. Since a similar extension is considered in the proof of \cite[Theorem 5.2]{La:Sch:05}, we only explain the main ideas. Since the Nagata dimension of a metric space and its metric completion are equal, it follows that either $Z \setminus \overline{\Omega}$ or $\overline{\Omega}$ has finite Nagata dimension. Let $n$ be the Nagata dimension of $Z \setminus \overline{\Omega}$ in the first case while let $(n-1)$ be the Nagata dimension of $\overline{\Omega}$ in the second case. Let also $c > 0$ be the constant obtained from \Cref{def:Nag_dim}. Now, as in \cite[Proof of Theorem 1.5, p. 3652-3653]{La:Sch:05} (resp. \cite[Proof of Theorem 1.6, p. 3653-3654]{La:Sch:05}), we conclude the existence of a (Whitney-type) covering $( B_i )_{i \in I}$ of $Z \setminus \overline{\Omega}$  by non-empty subsets of $Z \setminus \overline{\Omega}$ such that, for some constants $\alpha = \alpha(c,n),\beta = \beta(c,n)>0$, it holds that
\begin{enumerate}
    \item $\diam B_i \leq \alpha d( B_i, \overline{\Omega} )$ for $i \in I$;
    \item every set $D \subset Z \setminus \overline{\Omega}$ with $\diam(D) \leq \beta d( D, \overline{\Omega} )$ meets at most $(n+1)$ members of $( B_i )_{ i \in I }$.
\end{enumerate}
As in the proof of \cite[Theorem 5.2]{La:Sch:05}, we let $\delta = \beta/ (2(\beta+1))$ and consider the $1$-Lipschitz functions $\sigma_i \colon Z \setminus \overline{\Omega} \rightarrow \mathbb{R}$ where
\begin{align*}
    \sigma_i(x) = \max\left\{ 0, \delta d( \overline{\Omega}, B_i ) - d( B_i, x ) \right\}, \quad\text{for $x \in Z$.}
\end{align*}
For every $x \in Z \setminus \overline{\Omega}$, let $I_x = \left\{ i \in I \colon \sigma_i(x) > 0 \right\}$. We claim that  $1 \leq \sharp I_x \leq n+1$. Indeed, for every $i \in I_x$, let $x_i \in B_i$ such that $d( x_i, x ) < \delta d( B_i, \overline{\Omega} ) \leq \delta d( x_i, \overline{\Omega} )$. Let $D = \bigcup_{ i \in I_x } \{x_i\}$ and notice that
\begin{align*}
    \diam D \leq 2\delta \sup_{ i } d( x_i, \overline{\Omega} ) \leq 2\delta( \diam D + d( D, \overline{\Omega} ) ),
\end{align*}
so $\diam D \leq \beta d ( D, \overline{\Omega} )$. By (2) above, it holds that $D$ meets at most $(n+1)$ members of $( B_i )_{ i \in I }$. Therefore $1 \leq \sharp I_x \leq n+1$. 

We may now define $\overline{\sigma}(x) = \sum_{ i } \sigma_i(x) = \sum_{ i \in I_x } \sigma_i(x)$ and $\overline{\sigma}_i(x) = \sigma_i(x)/\overline{\sigma}(x)$ for every $x \in Z \setminus \overline{\Omega}$. By considering $z_i \in \overline{\Omega}$ such that $d( z_i, B_i ) < (2-\delta) d( \overline{\Omega}, B_i )$ for each $i \in I$, we define
\begin{align*}
    F(x) = \sum_{ i \in I_x } \overline{\sigma}_i(x) f(z_i), \quad x\in Z\setminus \overline\Omega, \quad f \in \LIP^{\infty}( \overline{\Omega}; \mathbb{V} ).
\end{align*}
The linear extension operator is defined by $E_{\mathbb V}\colon \LIP^{\infty}(\overline \Omega;\mathbb V)\to \LIP^{\infty}(Z;\mathbb V)$ where 
$$E_{\mathbb{V}}f=\begin{cases}
    f, & \text{in}\; \overline \Omega\text{, and}\\
    F, & \text{in} \; Z\setminus \overline \Omega.
\end{cases}$$
Indeed, the linearity is clear and the upper bound $\| E_{\mathbb{V}}f \|_{L^{\infty}(Z)} \leq (n+1)\|f\|_{L^{\infty}(\overline{\Omega})}$ is immediate. The validity of $\lip( E_{\mathbb{V}}f ) \leq C \lip(f)$ for a constant $C = C(n,\alpha,\beta)$ requires further work. Nevertheless, we leave the remaining standard details to the reader; similar estimates can be found in the proof of \cite[Theorem 5.2]{La:Sch:05}.
\end{proof}

\Cref{prop:Banach-valued_version} is a straightforward corollary of \Cref{thm:Main_inf.ext.} and \Cref{prop:real-to-Banach}.
\begin{proof}[Proof of \Cref{prop:Banach-valued_version}]
By using the notation from the proof of \Cref{prop:M=W_Banach-valued}, it follows that an extension operator $E_{\mathbb{V}} \colon W^{1,\infty}(\Omega; \mathbb{V}) \rightarrow W^{1,\infty}(Z; \mathbb{V})$ can be precomposed by an isometric embedding $I_v \colon W^{1,\infty}(\Omega) \rightarrow W^{1,\infty}(\Omega;\mathbb V)$ and postcomposed by the projection operator $$P_w \colon W^{1,\infty}(Z; \mathbb{V}) \rightarrow W^{1,\infty}(Z)$$ to obtain an extension operator $P_w \circ E_{\mathbb V} \circ I_v $ from $W^{1,\infty}(\Omega)$ into $W^{1,\infty}(Z)$.

Regarding the converse, by arguing as in the proof of \Cref{thm:Main_inf.ext.}, we have that $\LIP^{\infty}(\Omega)=W^{1,\infty}(\Omega)$, so the conclusion follows from \Cref{prop:real-to-Banach}.
\end{proof}

\section{Examples of infinitesimally doubling spaces}\label{sec:examples}

The subsequent \Cref{ex:example:collapsing,ex:example:collapsing:more,ex:CAT(0),ex:Lyt-Wen} show various geometric contexts where a metric measure space $Z$ is not doubling but still satisfies our assumptions (see also \Cref{ex:2}). In fact, the equality $\LIP^\infty(Z)=W^{1,\infty}(Z)$ holds isometrically for the first three families of examples.

  We start by constructing a large family of geodesic metric measure spaces $Z$, equipped with the $n$-dimensional Hausdorff measure $\mathcal{H}^n$, such that outside a single point, $Z$ is locally isometric to $\mathbb{R}^n$ and $W^{1,\infty}(Z) = \LIP^{\infty}(Z)$ isometrically. These are obtained by collapsing a \emph{continuum}, i.e. a non-empty compact and connected set, in $\mathbb{R}^n$ to a point. If the collapsing set is not a singleton, the reference measure is not doubling.
\begin{example}\label{ex:example:collapsing}
{\em Let $n \geq 2$ and consider any continuum $E \subset \mathbb{R}^{n}$. Denote
\begin{align*}
    d(x,y) \coloneqq \min\{ |x-y|, \mathrm{dist}(x,E) + \mathrm{dist}(E,y) \}
    \quad\text{for $x, y \in \mathbb{R}^{n}$}
\end{align*}
where $\mathrm{dist}(x,E) = \inf_{ y \in E } d( x, y )$. Notice that $d$ is nonnegative, finite, and satisfies the triangle inequality. Consider the quotient map $Q \colon \mathbb{R}^{n} \rightarrow Z$ collapsing $E$ to a point. We equip $Z$ with the quotient distance induced by $d$ and the $n$-dimensional Hausdorff measure $\mathcal{H}^n$ obtained from the quotient distance. Observe that $Q$ is $1$-Lipschitz in $\mathbb{R}^n$, measure-preserving in $\mathbb{R}^{n} \setminus E$, and that $\mathcal{H}^{n}( Q(E) ) = 0$. It is immediate that $Z$ is infinitesimally doubling.

We first prove that $W^{1,\infty}(Z) = \LIP^{\infty}(Z)$ isometrically. To this end, we fix a function $u\in\widehat N^{1,\infty}(Z)$ and show that $u$ is Lipschitz continuous and satisfies $\lip(u) = \mathscr{E}_{\infty}(u)$. This will show the claim.

It holds that $h = u \circ Q \in \widehat{N}^{1,\infty}(\mathbb{R}^n)$. Indeed, this follows because a curve family $\Gamma \subset \mathbb{R}^n$ is $\infty$-negligible if the image family $Q\Gamma$ is $\infty$-negligible. Since the Euclidean and essential metrics on $\mathbb{R}^n$ coincide, the definition of  $\widehat{N}^{1,\infty}(\mathbb{R}^n)$ implies that $h$ is Lipschitz.

Given that $h(z) = h(z')$ whenever $z, z' \in E$, it holds that
\begin{align*}
    | h(x) - h(y) |
    &=
    \min\left\{
        |h(x)-h(y)|,
        | h(x)-h(z) | + | h(z') - h(y) |
    \right\}
    \\
    &\leq
    \lip(h)
    \min\left\{
        | x-y |,
        | x-z | + | z'-y |
    \right\}
    \quad\text{for $x,y \in \mathbb{R}^n \setminus E$.}
\end{align*}
Taking the infimum over $z, z' \in E$ leads to the Lipschitz estimate $\lip(u) = \lip( u|_{ Z \setminus Q(E) } ) \leq \lip(h) = \mathscr{E}_{\infty}(h)$. Since $\mathscr{E}_{\infty}(h) \leq \mathscr{E}_{\infty}(u)$ and $\mathscr{E}_{\infty}(u) \leq \lip(u)$, it follows that $\lip( h ) = \lip( u ) = \mathscr{E}_{\infty}(u)$.

Secondly, we show that the reference measure in $Z$ is not doubling when $E$ contains more than two points. To this end, let $x_0 \in \mathbb{R}^n \setminus E$ and let $z_0 \in E$ be a nearest point to $x_0$. Let $\gamma \colon [0,|x_0-z_0|] \rightarrow \mathbb{R}^n$ be a unit speed Euclidean line segment joining $z_0$ to $x_0$. Then letting $\lambda = 3/2$, every $s\in (0,(2/3)|x_0-z_0|]$ satisfies
\begin{align*}
    \mathcal{H}^{n}\left( B\left( Q\left(\gamma\left( \lambda s \right) \right), s \right) \right)
    &=
    \mathcal{L}^{n}( B( \gamma(\lambda s ), s ) )
    =
    \omega_n s^n
    \quad\text{and}\quad
    \\
    \mathcal{H}^{n}\left( B\left( Q\left( \gamma\left( \lambda s \right) \right), 2s \right) \right)
    &\geq
    \omega_n s^n
    +
    \mathcal{L}^{n}\left( \left\{ x \in \mathbb{R}^n \colon 0 < \mathrm{dist}( x, E ) < s/2 \right\} \right).
\end{align*}
To finish, the failure of the doubling property follows if there exist $c > 0$ and $d \in (0,n)$ such that
\begin{align*}
    \liminf_{ s \rightarrow 0^{+} }
    \frac{ \mathcal{L}^{n}( \left\{ x \in \mathbb{R}^n \colon 0 < \mathrm{dist}( x, E ) < s/2 \right\} ) }{ s^d }
    \geq
    c.
\end{align*}
When $\mathcal{L}^{n}(E) > 0$, this follows with the value $d=1$ from a corollary of the Brunn--Minkowski inequality \cite[3.2.42]{Fed:69}. When $\mathcal{L}^{n}(E) = 0$, it follows that $E = \partial E$ and thus it suffices to derive a lower bound for the $1$-dimensional lower Minkowski content of $E$; see \cite[Section 5.5]{Mattila:95} for the definition and basic properties. Since $E$ is a continuum, up to a constant, the $1$-dimensional lower Minkowski content is bounded from below by the $1$-dimensional Hausdorff measure of $E$ and thus by the positive diameter of $E$. The desired lower bound follows in both cases.}
\end{example}
\Cref{ex:example:collapsing} can be modified by collapsing a collection of continua instead of a single one. To this end, consider a collection of pairwise disjoint continua $\{ E_k \}_{ k \in \mathcal{I} }$ in $\mathbb{R}^n$ for a non-empty index set $\mathcal{I}$. We impose the condition that $K = \bigcup_{ k \in \mathcal{I} } E_k$ is compact. An example to keep in mind is when $K = \mathcal{I} \times [0,1]$ for a Cantor set $\mathcal{I} \subset \mathbb{R}^{n-1}$ in which case the sets $\{  E_k \}_{ k \in \mathcal{I} }$ index the connected components of $K$.
\begin{example}\label{ex:example:collapsing:more}
{\em We consider the auxiliary function
\begin{align*}
    D( x, y )
    \coloneqq
    \inf_{i\in\mathcal I}\{ |x-y|, \mathrm{dist}(x,E_i) + \mathrm{dist}(E_i,y) \}
    \quad\text{for $x, y \in \mathbb{R}^{n}$}
\end{align*}
 and let
\begin{align*}
    d( x, y )
    =
    \inf \left\{\sum_{ j } D( x_j, x_{j+1} ):\, \text{$( x_j )_{j \geq 1}^{l}$ is a finite chain joining $x$ to $y$}\right\}
    \quad\text{for $x, y \in \mathbb{R}^n$.}
\end{align*}
If $Q \colon \mathbb{R}^n \rightarrow Z$ is the quotient map collapsing each $E_j$ to a point, then $d$ defines a unique distance in $Z$ so that $Q$ is $1$-Lipschitz in $\mathbb{R}^n$ and a local isometry in $\mathbb{R}^n \setminus K$. When we equip $Z$ with the Hausdorff measure $\mathcal{H}^n$, the map $Q$ becomes measure-preserving in $\mathbb{R}^n \setminus K$. Again, it holds that if $u \in \widehat{N}^{1,\infty}(Z)$, then $h = u \circ Q$ is Lipschitz. In particular, $u$ is continuous and $h|_{ E_i }$ is constant for every $i \in \mathcal{I}$. The latter observation implies
\begin{align*}
    |h(x)-h(y)| \leq \lip(h)D(x,y) \quad\text{for $x, y \in \mathbb{R}^n \setminus K$,}
\end{align*}
and thus $\lip(u) \leq \lip(h)$. By arguing as in \Cref{ex:example:collapsing}, it follows that $\lip(u) = \lip(h) = \mathscr{E}_{\infty}(u)$, and therefore $\LIP^\infty(Z)=W^{1,\infty}(Z)$ isometrically. Since $Q$ is $1$-Lipschitz, we conclude that $Z$ is $n$-rectifiable and thus infinitesimally doubling.

When $\mathcal{L}^{n}( K ) = 0$ and at least one of the components of $K$ has positive diameter, the measure $\mathcal{H}^n$ is not doubling. This follows from a minor modification of the argument in \Cref{ex:example:collapsing}.}
\end{example}
In \Cref{ex:example:collapsing:more}, the space $\LIP^{\infty}(Z)$ can be isometrically identified with the subspace $\mathcal{F}=\{h\in \LIP^{\infty}(\mathbb{R}^n):\, h|_{ E_i }\; \text{is a constant for every $i \in \mathcal{I}$}\}$. When $\mathcal{I}$ is finite, the constants $h|_{ E_i }$ can be prescribed independently. When the points in $Q(K)$ are not isolated, this is not possible. We give two examples. If $K = \mathcal{I} \times [0,1]$ for a Cantor set $\mathcal{I} \subset [0,1]$, then each component $C$ of $K$ is an accumulation point of the connected components of $K \setminus C$. Similarly, there is a compact set $K \subset \mathbb{R}^2$ such that every connected component of $K$ is a point or a Euclidean circle (i.e. the boundary of a disc) and every point of $K$ is an accumulation point of the circle components, cf. \cite[Example, p. 1308-1309]{Younsi:16}.

 Let us relate \Cref{ex:example:collapsing:more} with AMLEs. To this end, consider a bounded domain $\Omega \subset Z$ such that $Q(K) \subset \Omega$. Then an AMLE $u \colon Z \rightarrow \mathbb{R}$ of $g \colon Z \setminus \Omega \rightarrow \mathbb{R}$ is such that $u \circ Q$ is constant in bounded components of $\mathbb{R}^n \setminus K$ separated from $Q^{-1}( Z \setminus \Omega )$. For instance, in the latter example (the one based on \cite[Example, p. 1308-1309]{Younsi:16}), the function $u \circ Q$ is $\infty$-harmonic in $\Omega' = Q^{-1}( \Omega ) \setminus K$ and constant in the bounded components of $\mathbb{R}^2 \setminus K$.

We consider three further examples of metric measure spaces satisfying our assumptions but not necessarily having a (locally) doubling measure. The first involves simplicial complexes, the second curvature upper bounds in the sense of Alexandrov, and the third one solutions of the Plateau problem in metric spaces.
\begin{example}\label{ex:CAT(0)}
{\em We consider simplicial complexes and refer to \cite[Section 7]{Cavallucci:Sambusetti:2022} for the relevant definitions.

Let $Z$ be a connected and locally compact simplicial complex $Z$. Here $Z$ is equipped with the standard length metric where the length on each simplex is measured using the Euclidean length. We equip $Z$ with the measure $\mu$ that restricts to the $j$-dimensional Hausdorff measure in the interior of any $j$-dimensional simplex. Since every point of $Z$ contains an open neighbourhood with a finite number of simplices, it follows that $\mu$ is locally finite. Using this fact, the measure $\mu$ being infinitesimally doubling also follows. Note that $Z$ is proper as a locally compact length space, so $\mu$ is finite on bounded sets.

We claim that $W^{1,\infty}(Z) = \LIP^{\infty}(Z)$ isometrically. By \Cref{prop:sharpness} and recalling that every $\infty$-negligible family is contained in some $\Gamma^{+}_{N}$, for a Borel set $N \subset Z$ with $\mu(N) = 0$, we only need to prove that if $x, y \in Z$, $\epsilon > 0$, then there exists a curve $\theta$ joining $x$ to $y$, having zero length in $N$, and having length $\ell( \theta )\leq d(x,y) + \epsilon$.

By construction of the distance on $Z$, it follows that every $x, y \in Z$ can be joined with a curve $\gamma$ of length $d(x,y) + \epsilon/2$ consisting of $M$ geodesic segments $\{\gamma_i\}^{M}_{i=1} $ ($M$ possibly depending on $x$ and $y$ and $\epsilon >0$), each having length $\ell_i$ and being contained in a simplex $\Delta_i$ of $Z$, with the possible exception of the end points, for $1 \leq i \leq M$. If $\Delta_i$ is one-dimensional, we set $\theta_i = \gamma_i$ as no perturbation is needed. If $\Delta_i$ is at least two-dimensional, there exists a curve $\theta_i$ with the same endpoints as $\gamma_i$ in $\overline{\Delta}_i$, of length $\ell_i + \epsilon/(2M)$, and having zero length in $N$. Concatenating $\theta_i$ defines a curve $\theta$ with the desired properties.

The measure $\mu$ is not doubling up to any scale. Indeed, suppose that $\mu$ were doubling with a doubling constant $C$ and up to scale $r_1$. If $x_0 \in Z$ is a vertex, then $\mu( B(x_0,r) ) \geq \mathcal H^0(x_0)=1$ and there exist $c > 0$ and $r_0 \in (0,r_1/2)$ with the following property: for every $r \in (0,r_0)$, there exists $y \in B( x_0, 2r ) \setminus B( x_0, r )$ such that $\mu( B( y, r ) ) \leq c r$. It follows that
\begin{align*}
    1 \leq \mu( B( x_0, 2r ) ) \leq \mu( B( y, 4r ) ) \leq C^2 \mu( B( y, r ) ) \leq C^2 c r
    \quad\text{for $r \in (0,r_0)$.}
\end{align*}
This leads to a contradiction as $r \rightarrow 0^{+}$.

Notice that if a finite simplicial complex $Z$, say, is realized as a subset of $\mathbb{R}^K$ for some $K \in \mathbb{N}$, then using the ambient Euclidean distance and the measure $\mu$ above yields an example of a metric measure space with an infinitesimally doubling measure and which is very $\infty$-thick $(C,R)$-quasiconvex for some $C > 1$ and $R > 0$. The discussion above implies that the essential distance on a connected component coincides with the intrinsic distance.}
\end{example}
\begin{remark}
\Cref{ex:CAT(0)} relates to the work by Lytchak and Nagano in \cite{Lyt:Nag}. They proved that every locally compact locally geodesically complete metric space $Z$ with a curvature upper bound in the sense of Alexandrov (GCBA space) has a natural measure $\mu$ that is infinitesimally doubling. The measure consists of a sum $\mu = \sum_{ j = 0 }^{ k } \mathcal{H}^{j}\llcorner{ Z_j }$, where $\mathcal{H}^j$ is the $j$-dimensional Hausdorff measure. Here $Z_0, \dots, Z_k$ is a stratification of $Z$ for which $k$ is equal to the topological dimension of $Z$. The key properties of $\mu$ include that, for every $1 \leq j \leq k$, there exists an open set $U_j \subset Z$ contained in $Z_j$ such that the Hausdorff dimension of $\overline{Z}_j \setminus U_j$ is at most $(j-1)$, and that every point in $U_j$ has an open neighbourhood in $U_j$ bi-Lipschitz homeomorphic to an open ball in $\mathbb{R}^j$. Lytchak and Nagano also prove that $\mu$ is positive and finite on each open and relatively compact subset of $Z$. It is immediate that every point in $\bigcup_{j = 1 }^{ k } U_j$ has an open neighbourhood $U$ for which $\LIP^{\infty}(U) = W^{1,\infty}(U)$ holds. It would be interesting to know if $\LIP^{\infty}(Z) = W^{1,\infty}(Z)$ holds.
\end{remark}
\begin{example}\label{ex:Lyt-Wen}
{\em Metric measure spaces which are infinitesimally doubling but not necessarily doubling naturally occur in the context of the Plateau problem as formulated by Lytchak and Wenger \cite{Lyt:Wen:17:areamini,Lyt:Wen:17:energyarea,Lyt:Wen:18:intrinsic,Lyt:Wen:18:CAT}. In fact, under mild assumptions on a metric space $X$, if $u \colon \overline{\mathbb{D}} \rightarrow X$ is a solution for the Plateau problem, there exists an infinitesimally doubling metric measure space $Z$ with $\LIP^{\infty}(Z) = W^{1,\infty}(Z)$ homeomorphic to the closed disc $\overline{\mathbb{D}}$, a Sobolev map $v \colon \overline{\mathbb{D}} \to Z$, and a $1$-Lipschitz map $P \colon Z \to X$ for which $u = P \circ v$. This follows from \cite[Theorem 1.9]{Creutz_Soultanis}. 

The properties of $Z$ are closely related to the structure of the minimizer $u$, cf. \cite{Lyt:Wen:18:intrinsic,Lyt:Wen:18:CAT}. The two-dimensional Hausdorff measure $Z$ is not doubling in some situations. For instance, \cite[Example 11.3]{Lyt:Wen:18:CAT} considers a similar construction as in \Cref{ex:example:collapsing} for a disc. See also \cite[Example 6.4]{Creutz_Soultanis}.}
\end{example}

\newcommand{\etalchar}[1]{$^{#1}$}

\end{document}